\newcommand{\e}{\epsilon}
\newcommand{\se}{\sqrt{\epsilon}}
\newcommand{\E}{\mathbb{E}}
\newcommand{\Pro}{\mathbb{P}}
\newcommand{\ga}{\gamma}
\newcommand{\1}{\textbf{1}}
\newcommand{\xt}[1]{\mathbf{#1}}
\newtheorem{thm}{Theorem}[section]
\newtheorem{prop}{Proposition}[section]
\newtheorem{cor}{Corollary}[section]
\newtheorem{lem}{Lemma}[section]
\theoremstyle{nonumberplain}
\newtheorem{proof}{Proof}
\title{Radiative Transport Limit for the Random Schrödinger Equation with Long-Range Correlations}
\author{Christophe Gomez\thanks{Department of Mathematics, Stanford University, Building 380, Sloan Hall
Stanford, California 94305 USA (chgomez@math.stanford.edu). Tel: +(1)650-726-1625. Fax: +(1)650-725-4066.}}
\begin{document}

\maketitle

\begin{abstract} 

In this paper we study the asymptotic phase space energy distribution of solution of the Schrödinger equation with a time-dependent random potential. The random potential is assumed to be with slowly decaying correlations. We show that the Wigner transform of a solution of the random Schrödinger equation converges in probability to the solution of a radiative transfer equation. Moreover, we show that this radiative transfer equation with long-range coupling has a regularizing effect on its solutions. Finally, we give an approximation of this equation in term of a fractional Laplacian. The derivations of these results are based on an asymptotic analysis using perturbed-test-functions, martingale techniques, and probabilistic representations. 

\end{abstract}

\begin{flushleft}
\textbf{Key words.} Schrödinger equation, random media, long-range processes
\end{flushleft}

\begin{flushleft}
\textbf{AMS subject classification.} 81S30, 82C70, 35Q40 
\end{flushleft}

\section*{Introduction.}

The Schrödinger equation with a time-dependent random potential has been studied for a long time and arises in many application \cite{blomgren, erdos, erdos2, erdos3, ho,spohn}, for instance in wave propagation in random media under the paraxial or parabolic approximation \cite{bal3, bal0, bal,bal4, bal2}. The present work has been motivated by data collections in wave propagation experiments showing the possibility to encounter propagation medium with slowly decaying autocorrelation functions \cite{dolan, sidi}. These observations have stimulated this field of research \cite{garnier, marty, marty2, solna}, but mainly in one-dimensional propagation media. These media are very convenient for mathematical studies, but not relevant in many applications. Random media with long-range correlations have been also considered in motion of particles. In this contexts, the authors have shown that the deviation of the trajectory of a particle from its mean trajectory converges to a fractional Brownian motion \cite{fannjiang, komorowski, komorowski2}.

Let us consider the random Schrödinger equation 
\[\begin{split}
i\partial_t \phi+\frac{1}{2}\Delta_{\xt{x}}\phi-\se V(t,\xt{x})\phi&=0, \quad t\geq 0\text{ and }\xt{x}\in \mathbb{R}^d,\\
\phi(0,\xt{x})=\phi_0(\xt{x}),
\end{split}\]
with a random potential $V(t,\xt{x})$, which is a spatially and temporally homogeneous mean-zero random field. Here, $t\geq0$ represents the temporal variable, $\xt{x}\in\mathbb{R}^d$ the spatial variable with $d\geq 1$, and $\e\ll1$ is a small parameter which represents the relative strength of the random fluctuations. A classical tool to study the phase space energy density of solutions of the random Schrödinger equation is the Wigner transform defined by
\[ W(t,\xt{x},\xt{k})=\frac{1}{(2\pi)^d}\int d\xt{y} e^{i \xt{k}\cdot\xt{y}}\phi\Big(t,\xt{x}-\frac{\xt{y}}{2}\Big)\overline{\phi\Big(t,\xt{x}+\frac{\xt{y}}{2}\Big)}.\]
We refer to \cite{gerard,lions} for the basic properties of the Wigner transfom. In our problem the size of the random variations are small, so we have to wait for long propagation distance and large time propagation to observe significant effects. Consequently, we consider the rescaled field 
\[\phi_\e(t,\xt{x})=\phi\Big(\frac{t}{\e},\frac{\xt{x}}{\e}\Big)\]  
which satisfies the scaled random Schrödinger equation 
\[\begin{split}
i\e\partial_t \phi_\e+\frac{\e^2}{2}\Delta_{\xt{x}}\phi_\e-\se V\Big(\frac{t}{\e},\frac{\xt{x}}{\e}\Big)\phi_\e&=0, \quad t\geq 0\text{ and }\xt{x}\in \mathbb{R}^d,\\
\phi_\e(0,\xt{x})=\phi_{0,\e}(\xt{x}).
\end{split}\]
In this regime, we consider the scaled Wigner transfom given by
\[ W_\e(t,\xt{x},\xt{k})=\frac{1}{(2\pi)^d}\int d\xt{y} e^{i \xt{k}\cdot\xt{y}}\phi_\e\Big(t,\xt{x}-\e\frac{\xt{y}}{2}\Big)\overline{\phi_\e\Big(t,\xt{x}+\e\frac{\xt{y}}{2}\Big)}.\]
The scaled Wigner transform is well suited to study functions oscillating on scales of order $\e^{-1}$, since the difference between $\phi_\e(t,\xt{x}-\e\xt{y}/2)$ and $\phi_\e(t,\xt{x}+\e\xt{y}/2)$ is of order $\mathcal{O}(1)$. In several paper \cite{bal0, bal, erdos, erdos2, erdos3, ho, lukkarinen, spohn} it has been shown that the expectation of the Wigner transform $\E[W_\e(t,\xt{x},\xt{k})]$ converges as $\e$ goes to $0$ to the solution $W$ of the radiative transport equation 
\[
\partial_t W +\xt{k}\cdot\nabla_\xt{x} W= \int d\xt{p}\sigma(\xt{p},\xt{k})(W(t,\xt{x},\xt{p})-W(t,\xt{x},\xt{k})),
\]
where the transfer coefficient $\sigma(\xt{p},\xt{k})$ depends on the power spectrum of the two-point correlation function of the random potential $V$. This result holds under mixing assumptions on the random potential $V$. Moreover, it has been shown that the limit $W$ is often self-averaging, that is, $W_\e$ converges in probability to the deterministic limit $W$ for the weak topology on $L^2(\mathbb{R}^{2d})$ \cite{bal3, bal,bal4}.

In \cite{bal2} the authors study the Schrödinger equation with a random potential with either rapidly or slowly decaying correlations. This paper constitutes a first step in the study of wave propagation in random media of dimension strictly greater than $1$ with long-range correlations. In \cite{bal2} the authors study the field $\phi$ itself and not its phase space energy density. In the rapidly decorrelating case the authors show that the field and the phase space energy density evolve at the same scale, which is of order $\e^{-1}$. As a result, in this case the scale $\e^{-1}$ is universal in the sense that it does not depend on the random potential $V$. However, in the slowly deccorelating case, the authors have observed macroscopic effects happening on the field $\phi$ at a shorter scale $\e^{-1/(2\kappa_0)}$, with $\kappa_0>1/2$. On this scale, the asymptotic field has a random phase modulation given by a fractional Brownian motion with Hurst index $\kappa_0$. Let us note that the parameter $\kappa_0$ depends on the statistic of the random potential $V$, and therefore the scale at which we can observe significant effects on the field $\phi$ is no longer universal. Moreover, as we will see in this paper, at the scale $\e^{-1/(2\kappa_0)}$ the phase space energy of the field $\phi_\e$ is not affected. In fact, the phase space energy distribution in both cases, rapidly and slowly decaying correlations, evolve on the scale $\e^{-1}$.   

In this paper, we investigate the propagation of the phase space energy density of the field $\phi_\e$ in a random medium with slowly decaying correlations. We show in Theorem \ref{thasymptotic} that the Wigner transform of $\phi_\e$ converges in probability, for the weak topology on $L^2(\mathbb{R}^{2d})$, to the unique solution of a radiative transfer equation similar to the one obtained in \cite{bal0} under rapidly decaying correlations, but with an important difference coming from the long-range correlation assumption. In contrast with the rapidly decorelating case \cite{bal0}, the scattering coefficient $\Sigma(\xt{k})=\int d\xt{p} \sigma(\xt{k},\xt{p})=+\infty$ is not defined anymore. The radiative transfer equation is still well defined because of the difference $W(t,\xt{x},\xt{p})-W(t,\xt{x},\xt{k})$ which balances the singularity given by the long-range correlation assumption. This result shows a qualitative and thorough difference between the two cases, rapidly and slowly deccorelating cases. First, in contrast with the rapidly decorrelating case, for which the phase and the phase space density evolve on the same scale $\e^{-1}$, the phase of $\phi$ and its phase space energy evolve at different scales in the slowly decorrelating case, which are respectively $\e^{-1/(2\kappa_0)}$ and $\e^{-1}$. Moreover, we show in Theorem \ref{regularity} that the long-range correlation assumption implies a regularizing effect of the radiative transfer equation. This regularizing effect is a consequence of $\Sigma(\xt{k})=+\infty$, and cannot be observed under rapidly decaying correlations. Finally, we give in Theorem \ref{fractional} an approximation of the radiative transfer equation in term of fractional Laplacian, which permits to exhibit from its solutions a damping coefficient obeying a power law with exponent lying in $(0,1)$. This approximation corresponds to the long space and time diffusion approximation of the radiative transfer equation. 

The organization of this paper is as follows: In Section \ref{section1}, we present the random Schröndinger equation that will be studied in this paper. Then, we present the construction of the random potential that will be used to model the random perturbations. Finally, we introduce the long-range correlation assumption used throughout this paper. In Section 2, we state the main result of this paper. We describe the asymptotic evolution in long range random media of the phase space energy density of the solution of the random Schrödinger equation. In Section \ref{regularizing}, we present the regularizing effect of the radiative transfer equation. 
In Section \ref{probrepsec}, we present the probabilistic representation of the radiative transfer equation.  
In Section \ref{fractionalsec} we present the approximation of the radiative transfer equation in term of fractional Laplacian. Finally, Sections \ref{proofthlimit}, \ref{proofthreg}, \ref{proofproprep}, and \ref{proofthfrac} are devoted to the proofs of Theorem \ref{thasymptotic}, Theorem \ref{regularity}, Proposition \ref{probrep} and Theorem \ref{fractional}.

\section*{Acknowledgment}

This work was supported by AFOSR FA9550-10-1-0194 Grant. I wish to thank Lenya Ryzhik for his suggestions. 

\section{The Random Schrödinger equation}\label{section1}

This section introduces precisely the random Schrödinger equation that we study in this paper. We consider the dimensionless form of the Schrödinger equation on $\mathbb{R}^d$ with a time-dependent random potential:
\begin{equation}\label{schrodingereq0}
i \partial_t \phi+\frac{1}{2}\Delta_{\xt{x}} \phi-\e^{\frac{1-\ga}{2}} V\Big(\frac{t}{\e^{\gamma}},\xt{x}\Big)\phi =0,\end{equation}
with $\ga\in (0,1)$. The case $\ga\in(0,1)$ permits to address a more convenient study of the phase space energy density of the field $\phi$. The case $\ga=0$, which will not be addressed in this paper, leads to much more difficult algebra. In \eqref{schrodingereq0} the strength of the random perturbation is small, so we consider the rescaled field 
\[\phi_\e(t,\xt{x})=\phi\Big(\frac{t}{\e},\frac{\xt{x}}{\e}\Big)\]
to observe significant effects after a sufficiently large propagation distance and propagation time. Therefore, the rescaled field $\phi_\e$ satisfies the scaled Schrödinger equation 
\begin{equation}\label{schrodingereq}
i\e \partial_t \phi_\e+\frac{\e^2}{2}\Delta_{\xt{x}} \phi_\e-\e^{\frac{1-\ga}{2}} V\Big(\frac{t}{\e^{1+\gamma}},\frac{\xt{x}}{\e}\Big)\phi_\e =0\quad \text{with}\quad \phi_\e(0,\xt{x})=\phi_{0,\e}(\xt{x}),
\end{equation}
with $\ga \in (0,1)$. Here $\Delta_{\xt{x}}$ is the Laplacian on $\mathbb{R}^d$ given by $\Delta=\sum_{j=1}^d \partial_{x_j}^2$. $(V(t,\xt{x}),\xt{x}\in \mathbb{R}^d, t\geq0)$ is the random potential, whose properties are described in the next section. Moreover, the initial datum $\phi_{0,\e}(\xt{x})=\phi_{0,\e}(\xt{x},\zeta)$ is a random function with respect to a probability space $(S,\mu(d\zeta))$, and independent to the random potential $V$. This randomness on the initial data is called mixture of states. This terminology comes from the quantum mechanics, and the reason for introducing this additional randomness will be explained more precisely in Section \ref{wignersection}.

\subsection{Random potential}\label{grfield}

In this section we present the construction of the random potential $V$ which is considered throughout this paper. It is also a short remainder about some properties of Gaussian random fields that we use in the proof of Theorem \ref{thasymptotic}. All the results exposed in this section can be shown using the standard properties of Gaussian random fields presented in \cite{adler, adlertaylor} for instance.

In this paper, the random perturbations are modeled using a stationary continuous random process in space and time, denoted by $(V(t,\xt{x}),  t\geq 0, x \in \mathbb{R}^d)$. We construct our potential in the Fourier space as follows. Let $\widehat{R}_0$ be a nonnegative function with support included in a compact subset of $\mathbb{R}^d$ containing $0$, such that
$\widehat{R}_0\in L^1(\mathbb{R}^d)$, $\widehat{R}_0(-\xt{p})=\widehat{R}_0(\xt{p})$, and $\widehat{R}_0$ has a singularity in $0$. Let us consider
\[\mathcal{H}=\left\{\varphi \text{ such that } \int_{\mathbb{R}^d}d\xt{p}\widehat{R}_0(\xt{p})\lvert \varphi(\xt{p})\rvert^2 <+\infty\right\},\] 
which is a Hilbert space equipped with the inner product
\[ \big<\varphi,\psi\big>_{\mathcal{H}}=\int d\xt{p}\,\,\widehat{R}_0(\xt{p})\varphi (\xt{p}) \overline{\psi(\xt{p})}\quad \forall (\varphi,\psi)\in \mathcal{H}^2.\]
Let us consider $(\widehat{V}(t,\cdot))_{t\geq0}$ be a stationary continuous zero-mean Gaussian field on $\mathcal{H}'$ with autocorrelation function given by
\[\E[\widehat{V}(t_1,d\xt{p}_1)\widehat{V}(t_2,d\xt{p}_2)]=(2\pi)^d R(t_1-t_2,\xt{p}_1)\delta(\xt{p}_1+\xt{p}_2)\]
and
\[\E[\widehat{V}(t_1,d\xt{p}_1)\overline{\widehat{V}(t_2,d\xt{p}_2)}]=(2\pi)^d R(t_1-t_2,\xt{p}_1)\delta(\xt{p}_1-\xt{p}_2),\]
and where $\mathcal{H}'$ is the dual space of $\mathcal{H}$. 
In other words, $\forall n\in\mathbb{N}^\ast$, $\forall (\varphi_1,\dots,\varphi_n) \in {\mathcal{H}}^n$ and $\forall (t_1\dots,t_n)\in[0,+\infty)^n$, 
\[\big(\big<\widehat{V}(t_1),\varphi_1\big>_{\mathcal{H}',\mathcal{H}}\,\,,\dots,\,\,\big<\widehat{V}(t_n),\varphi_n\big>_{\mathcal{H}',\mathcal{H}}\big)\] 
is a zero-mean Gaussian vector with covariance matrix given by: $\forall(j,l)\in\{1,\dots,n\}^2$
\[
\mathbb{E}\left[\big<\widehat{V}(t_j),\varphi_j\big>_{\mathcal{H}',\mathcal{H}}\big<\widehat{V}(t_l),\varphi_l\big>_{\mathcal{H}',\mathcal{H}}\right]=\int_{\mathbb{R^d}}d\xt{p}\,\,\varphi_j(\xt{p})\varphi_l(-\xt{p})R(t_1-t_2,\xt{p})
\]
and
\[
\mathbb{E}\left[\big<\widehat{V}(t_j),\varphi_j\big>_{\mathcal{H}',\mathcal{H}}\overline{\big<\widehat{V}(t_l),\varphi_l\big>_{\mathcal{H}',\mathcal{H}}}\right]=\int_{\mathbb{R^d}}d\xt{p}\,\,\varphi_j(\xt{p})\overline{\varphi_l(\xt{p})}R(t_1-t_2,\xt{p}).
\]
Here, the spatial power spectrum is given by
\begin{equation}\label{rtilde}
R(t,\xt{p})=e^{-\mathfrak{g}(\xt{p})\lvert t\rvert }\widehat{R}_0(\xt{p}),
\end{equation} 
where the nonnegative function $\mathfrak{g}$ is the spectral gap. We assume that the spectral gap is symmetric, that is $\mathfrak{g}(\xt{p})=\mathfrak{g}(-\xt{p})$. Particular assumptions involving the spectral gap $\mathfrak{g}$ will be introduced at the end of this section to ensure the long-range correlation property of the potential $V$.

According to the shape of the autocorrelation function \eqref{rtilde}, we have the following proposition.
\begin{prop} 
Let 
\begin{equation}\label{filtration}\mathcal{F}_t=\sigma(\widehat{V}(s,\cdot),s\leq t)\end{equation}
be the $\sigma$-algebra generated by $(\widehat{V}(s,\cdot), s\leq t)$. We have 
\begin{equation}\label{markovesp}
\mathbb{E}\big[ \widehat{V}(t+h,\cdot) \vert \mathcal{F}_t\big]=e^{-\mathfrak{g}(\xt{p})h}\widehat{V}(t,\cdot)\end{equation}
and $\forall (\varphi,\psi)\in \mathcal{H}^2$
\begin{equation}\label{markovvar}\begin{split}
\mathbb{E}\Big[ \big<\widehat{V}(t+h),\varphi\big>_{\mathcal{H}',\mathcal{H}} \big<\widehat{V}(t+h),\psi\big>_{\mathcal{H}',\mathcal{H}}&-  \mathbb{E}\big[  \big<\widehat{V}(t+h),\varphi\big>_{\mathcal{H}',\mathcal{H}} \vert \mathcal{F}_t\big]\mathbb{E}\big[  \big<\widehat{V}(t+h),\psi\big>_{\mathcal{H}',\mathcal{H}}\vert \mathcal{F}_t\big]  \Big\vert \mathcal{F}_t\Big]\\
&=\int d\xt{p}\,\, \varphi(\xt{p})\psi(\xt{-p})\widehat{R}_0(\xt{p})\left(1-e^{-2\mathfrak{g}(\xt{p})h} \right).
\end{split}\end{equation}
\end{prop}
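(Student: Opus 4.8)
Throughout, write $\langle\cdot,\cdot\rangle$ for the duality pairing $\langle\cdot,\cdot\rangle_{\mathcal{H}',\mathcal{H}}$. The plan is to exploit that, because the temporal covariance $R(t,\xt{p})=e^{-\mathfrak{g}(\xt{p})\lvert t\rvert}\widehat{R}_0(\xt{p})$ is exponential in $t$, the field $(\widehat{V}(t,\cdot))_{t\geq0}$ is, mode by mode in $\xt{p}$, an Ornstein--Uhlenbeck process, so that both identities reduce to an orthogonal-projection argument in the Gaussian Hilbert space generated by the field, together with the semigroup cancellation $e^{-\mathfrak{g}(\xt{p})h}e^{-\mathfrak{g}(\xt{p})(t-s)}=e^{-\mathfrak{g}(\xt{p})(t+h-s)}$. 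All the Gaussian facts invoked below are standard and may be taken from \cite{adler, adlertaylor}.

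First I would prove \eqref{markovesp}. Since $\langle\widehat{V}(t+h),\varphi\rangle$ together with the family $\{\langle\widehat{V}(s),\psi\rangle:s\leq t,\ \psi\in\mathcal{H}\}$ is jointly Gaussian, the conditional expectation $\mathbb{E}[\langle\widehat{V}(t+h),\varphi\rangle\,\vert\,\mathcal{F}_t]$ equals the $L^2$-orthogonal projection of $\langle\widehat{V}(t+h),\varphi\rangle$ onto the closed linear span of the past. I claim this projection is $\langle\widehat{V}(t),e^{-\mathfrak{g}(\cdot)h}\varphi\rangle$, which is precisely the pairing interpretation of the distributional identity \eqref{markovesp}; note that $e^{-\mathfrak{g}(\cdot)h}\varphi\in\mathcal{H}$ whenever $\varphi\in\mathcal{H}$, since $\lvert e^{-\mathfrak{g}(\cdot)h}\varphi\rvert\leq\lvert\varphi\rvert$, so the right-hand side is well defined and $\mathcal{F}_t$-measurable. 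To verify the claim it suffices to show that the residual $\langle\widehat{V}(t+h),\varphi\rangle-\langle\widehat{V}(t),e^{-\mathfrak{g}(\cdot)h}\varphi\rangle$ is uncorrelated with every $\langle\widehat{V}(s),\psi\rangle$ (and its complex conjugate) for $s\leq t$. Using the covariance formulas of Section \ref{grfield}, the symmetry $\mathfrak{g}(-\xt{p})=\mathfrak{g}(\xt{p})$, and $t+h-s\geq0$, both second moments equal $\int d\xt{p}\,\varphi(\xt{p})\psi(-\xt{p})e^{-\mathfrak{g}(\xt{p})(t+h-s)}\widehat{R}_0(\xt{p})$, so the residual is orthogonal to the past. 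Being centered Gaussian and uncorrelated with the generating family of $\mathcal{F}_t$, it is independent of $\mathcal{F}_t$, whence its conditional expectation vanishes and \eqref{markovesp} follows.

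For \eqref{markovvar} I would use that, for a jointly Gaussian family, the conditional covariance given $\mathcal{F}_t$ is deterministic and coincides with the ordinary covariance of the residuals; the left-hand side of \eqref{markovvar} is exactly this conditional covariance, so it equals
\[\mathbb{E}\big[\big(\langle\widehat{V}(t+h),\varphi\rangle-\langle\widehat{V}(t),e^{-\mathfrak{g}(\cdot)h}\varphi\rangle\big)\big(\langle\widehat{V}(t+h),\psi\rangle-\langle\widehat{V}(t),e^{-\mathfrak{g}(\cdot)h}\psi\rangle\big)\big].\]
Expanding into four second moments and evaluating each with the covariance formula, the pure $(t+h,t+h)$ term yields $\int d\xt{p}\,\varphi(\xt{p})\psi(-\xt{p})\widehat{R}_0(\xt{p})$, while the three remaining terms, each carrying at least one factor at time $t$, all equal $\int d\xt{p}\,\varphi(\xt{p})\psi(-\xt{p})e^{-2\mathfrak{g}(\xt{p})h}\widehat{R}_0(\xt{p})$ and enter with signs $-,-,+$; their signed sum is the claimed $\int d\xt{p}\,\varphi(\xt{p})\psi(-\xt{p})\widehat{R}_0(\xt{p})(1-e^{-2\mathfrak{g}(\xt{p})h})$.

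These computations are routine once the structure is identified. The two points requiring genuine care are the interpretation of \eqref{markovesp} as a distributional identity in $\mathcal{H}'$ (to be read after pairing against test functions in $\mathcal{H}$) and the appeal to the conditional-Gaussian principle that makes the conditional covariance deterministic; both rest on the joint Gaussianity of the field and the exponential form of $R$. The only mild bookkeeping obstacle is the complex, Hermitian nature of $\widehat{V}$: orthogonality of the residual must be checked against both $\langle\widehat{V}(s),\psi\rangle$ and its conjugate, but since the two covariance kernels in Section \ref{grfield} share the same exponential time dependence, the identical semigroup cancellation disposes of both cases.
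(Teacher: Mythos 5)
Your proof is correct and is precisely the standard Gaussian-conditioning argument (conditional expectation as orthogonal projection, deterministic conditional covariance equal to the covariance of the residuals) that the paper itself invokes implicitly: the paper states this proposition without proof, referring only to standard properties of Gaussian random fields in \cite{adler, adlertaylor}. Your verification of the semigroup cancellation $e^{-\mathfrak{g}(\xt{p})h}e^{-\mathfrak{g}(\xt{p})(t-s)}=e^{-\mathfrak{g}(\xt{p})(t+h-s)}$, the membership $e^{-\mathfrak{g}(\cdot)h}\varphi\in\mathcal{H}$, and the need to check orthogonality against both the pairing and its conjugate are exactly the details the paper leaves to the reader.
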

These two properties will be used in the proof of Theorem \ref{thasymptotic}, which is based on the perturbed-test-function method.

Let us note that $\big<\widehat{V},\varphi\big>_{\mathcal{H}',\mathcal{H}}$ is a real-valued gaussian process once $\varphi\in\mathcal{H}$ satisfies $\overline{\varphi(\xt{p})}=\varphi(-\xt{p})$. According to this last remark, let us introduce the real random potential $V$ defined by
\[V(t,\xt{x})=\big<\widehat{V}(t,\cdot), e_{\xt{x}}\big>_{\mathcal{H}',\mathcal{H}}=\frac{1}{(2\pi)^d}\int_{\mathbb{R}^d} \widehat{V}(t,d\xt{p})e^{i\xt{p}\cdot \xt{x}},\]
where $e_{\xt{x}}\in\mathcal{H}$ is defined by $e_{\xt{x}}(\xt{p})=e^{i\xt{p}\cdot\xt{x}}/(2\pi)^d$. Consequently, the random potential $V$ is a stationary real-valued zero-mean Gaussian field with a covariance function given by: $ \forall (t_1,t_2)\in[0,+\infty)^2$ and $\forall(\xt{x}_1,\xt{x}_2)\in \mathbb{R}^{2d}$
\begin{equation}\label{covfunc}\begin{split} R(t_1-t_2,\xt{x}_1-\xt{x}_2)=\mathbb{E}\big[V(t_1,\xt{x}_1)V(t_2,\xt{x}_2)\big]&=
\frac{1}{(2\pi)^{d}}\int d\xt{p}R(t_1-t_2,\xt{p})e^{i\xt{p}\cdot(\xt{x}_1-\xt{x}_2)}\\
&=\frac{1}{(2\pi)^{d+1}}\int d\omega d\xt{p}\widehat{R}(\omega,\xt{p})e^{i\omega(t_1-t_2)}e^{i\xt{p}\cdot(\xt{x}_1-\xt{x}_2)},\end{split}\end{equation} 
where
\begin{equation}\label{spectraldens}
\widehat{R}(\omega,\xt{p})=\frac{2\mathfrak{g}(\xt{p})\widehat{R}_0(\xt{p})}{\omega^2+\mathfrak{g}^2(\xt{p})}.
\end{equation}

According to the previous construction and \cite[Theorem 2.2.1]{adlertaylor} the random potential $V$ is continuous and bounded with probability one on each compact subset $K$ of $\mathbb{R}\times\mathbb{R}^d$. This fact comes from the relation 
\[\mathbb{E}\big[\big(V(t,x) -V(s,y)\big)^2\big]^{1/2} \leq C \left(\int d\xt{p}\widehat{R}_0(\xt{p})\right)\big(\lvert t-s\rvert+\lvert \xt{x}-\xt{y}\rvert \big),\]
$\forall(t,s,\xt{x},\xt{y})\in[0,T]^2\times K^2$. Moreover, we have the following proposition. 
\begin{prop}
$\forall \mu>0$, $\eta>0$, and $\forall K$ compact subset of $\mathbb{R}^d$ 
\begin{equation}\label{cg1}
\lim_{\e\to 0} \mathbb{P}\Big(\e^\mu\sup_{\xt{x}\in K}\sup_{t\in[0,T]}\Big\lvert V\Big(\frac{t}{\e^{1+\gamma}},\frac{\xt{x}}{\e}\Big)\Big\rvert > \eta\Big)=0.\end{equation}
\end{prop}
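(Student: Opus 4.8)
The plan is to exploit the Gaussianity of $V$ through two classical tools from the theory of Gaussian processes: Dudley's metric-entropy bound to control the \emph{expected} supremum over the rescaled domain, and the Borell--TIS concentration inequality to promote this to a tail estimate. Writing $D_\e:=[0,T\e^{-(1+\ga)}]\times \e^{-1}K$, the change of variables $(t,\xt{x})\mapsto(t/\e^{1+\ga},\xt{x}/\e)$ identifies the event in the statement with $\{\sup_{D_\e}|V|>\eta\,\e^{-\mu}\}$. The crucial observation is that $\sup_{D_\e}|V|$ grows only like $\sqrt{\lvert\log\e\rvert}$, so the polynomial threshold $\eta\e^{-\mu}$ is attained only with superpolynomially small probability.

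First I would record that, by stationarity, the variance $\sigma^2:=R(0,\xt{0})=(2\pi)^{-d}\int d\xt{p}\,\widehat{R}_0(\xt{p})$ is constant in $(t,\xt{x})$, so the canonical pseudometric $d\big((t,\xt{x}),(s,\xt{y})\big):=\mathbb{E}\big[(V(t,\xt{x})-V(s,\xt{y}))^2\big]^{1/2}$ has diameter at most $2\sigma$ on all of $\mathbb{R}\times\mathbb{R}^d$. The $L^2$-Lipschitz estimate already established in this section shows that $d$ is dominated by a fixed multiple of the Euclidean metric, so the $d$-covering number of $D_\e$ satisfies $N(D_\e,d,\delta)\leq C\,\e^{-(d+1+\ga)}\delta^{-(d+1)}$ for $\delta\le 2\sigma$, since $D_\e$ is a box with one side of length $\sim\e^{-(1+\ga)}$ and $d$ sides of length $\sim\e^{-1}$. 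Applying Dudley's bound on the compact set $D_\e$, on which $V$ is almost surely continuous so that the supremum is a genuine measurable random variable, I obtain
\[
\mathbb{E}\Big[\sup_{(t,\xt{x})\in D_\e} V(t,\xt{x})\Big]\leq C\int_0^{2\sigma}\sqrt{\log N(D_\e,d,\delta)}\;d\delta .
\]
The integrand is bounded by $\sqrt{(d+1+\ga)\lvert\log\e\rvert+C'}+\sqrt{(d+1)\log(1/\delta)}$, whose integral over $(0,2\sigma)$ is $O(\sqrt{\lvert\log\e\rvert})$, the $\log(1/\delta)$ part contributing only a finite constant. As $-V$ has the same law as $V$, the bound transfers to $\mathbb{E}[\sup_{D_\e}|V|]\le 2\,\mathbb{E}[\sup_{D_\e}V]\le C\sqrt{\lvert\log\e\rvert}$.

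Finally I would invoke the Borell--TIS inequality: for every $u>0$,
\[
\mathbb{P}\Big(\sup_{D_\e}|V|>\mathbb{E}\big[\sup_{D_\e}|V|\big]+u\Big)\leq 2\,e^{-u^2/(2\sigma^2)} .
\]
Since $\eta\e^{-\mu}$ eventually dominates $\mathbb{E}[\sup_{D_\e}|V|]=O(\sqrt{\lvert\log\e\rvert})$, I would take $u=\eta\e^{-\mu}-\mathbb{E}[\sup_{D_\e}|V|]\geq\tfrac12\eta\e^{-\mu}$ for $\e$ small, so that the probability is at most $2\exp\big(-\eta^2\e^{-2\mu}/(8\sigma^2)\big)$, which tends to $0$ as $\e\to0$.

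I expect the main obstacle to be the metric-entropy step: one must combine the $L^2$-Lipschitz estimate with the uniform variance bound to produce a covering number that is only \emph{polynomial} in $\e^{-1}$, so that $\log N(D_\e,d,\delta)$ contributes the decisive $\sqrt{\lvert\log\e\rvert}$ growth rather than a power of $\e^{-1}$. Once this logarithmic rate for $\mathbb{E}[\sup_{D_\e}|V|]$ is secured, Gaussian concentration closes the argument essentially for free, and the independence of the bound from $\mu$ and $\eta$ is automatic because the prefactor $\e^\mu$ always beats a logarithm.
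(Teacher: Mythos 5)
Your argument is correct and is precisely the one the paper has in mind: the paper proves this proposition only by citing Adler--Taylor (Theorem 2.1.1, i.e.\ the Borell--TIS inequality, with the Dudley entropy bound of Theorem 2.1.3 used in the same way in Lemmas \ref{bound1} and \ref{bound3}), and your write-up supplies exactly those details, recovering the paper's remark that the convergence is in fact exponentially fast. The only cosmetic caveat is that the paper's $L^2$-increment bound is really of order $\sqrt{\lvert t-s\rvert}+\lvert \xt{x}-\xt{y}\rvert$ in the time variable, but this only changes the polynomial exponent in your covering-number estimate and hence nothing in the $\sqrt{\lvert\log\e\rvert}$ conclusion.
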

According to \cite[Theorem 2.1.1]{adlertaylor}, one can show that the limit \eqref{cg1} holds exponentially fast as $\e \to 0$.

\subsection{Slowly decorrelating assumption} \label{SDCAsec}

In this paper we are interested in a random potential with long-range correlations. Let us introduce some additional assumptions on the spectral gap $\mathfrak{g}$ of the spatial power spectrum \eqref{rtilde}, in order to give slowly decaying correlations to the random potential $V$.

Let us note that $\forall t\geq 0$, the random field $V(t,\cdot)$ has spatial slowly decaying correlations. In fact, if we freeze the temporal variable, the autocorrelation function of the random potential $V(t,\cdot)$ is given by
\[R(t,\xt{x})=\E[V(t,\xt{x}+\xt{y})V(t,\xt{y})]=\int d\xt{p}  \widehat{R}_0(\xt{p})e^{-i\xt{x}\cdot\xt{p}}\] 
where $\widehat{R}_0(\xt{p})$ is assumed to have a singularity in $0$. Therefore, $R(t,\cdot)\not\in L^1(\mathbb{R}^d)$ and $(V(t))_{t\geq 0}$ models a family of random fields on $\mathbb{R}^d$ with spatial long-range correlations which evolves with respect to time. However, since  \eqref{schrodingereq0} is a time evolution problem, we have to take care of the evolution of the random perturbation $V$ with respect to the temporal variable. In fact, as we will see, if the family $(V(t))_{t\geq 0}$ has rapidly decaying correlations with respect to time the evolution problem \eqref{schrodingereq0} behaves like in the mixing case addressed in \cite{bal6}. 
In a time evolution problem with random perturbations with rapidly decaying correlations in time, even if at each fixed time the spatial correlations are slowly decaying, the resulting time evolution problem behaves as if it has mixing properties. Consequently, we have to introduce a long-range correlation assumption with respect to the temporal variable. Let us note that $\forall (s,\xt{x},\xt{y})\in \mathbb{R}_+\times\mathbb{R}^{2d}$
\begin{equation}\label{SDCAcor}\int_0^{+\infty} dt \big\lvert\E\big[V(t+s,\xt{x}+\xt{y})V(s,\xt{y})\big]\big\rvert=+\infty \Longleftrightarrow \int d\xt{p}  \frac{\widehat{R}_0(\xt{p})}{\mathfrak{g}(\xt{p})} =+\infty.  \end{equation}
Consequently, throughout this paper we say that the family $(V(t))_{t\geq 0}$ of random fields with spatial long-range correlations has slowly decaying correlations in time if  
\begin{equation}\label{SDCA}
\int d\xt{p}\frac{\widehat{R}_0(\xt{p})}{\mathfrak{g}(\xt{p})}=+\infty,\end{equation}
and rapidly decaying correlation in time otherwise. For technical reasons in the proof of Theorem \ref{thasymptotic} we assume that
\begin{equation}\label{longrange}
\int d\xt{p}\widehat{R}_0(\xt{p})\frac{\lvert\xt{p}\rvert}{\mathfrak{g}(\xt{p})}+\int d\xt{p}\widehat{R}_0(\xt{p})\frac{\lvert\xt{p}\rvert^2}{\mathfrak{g}^2(\xt{p})}+\int d\xt{p}\widehat{R}_0(\xt{p})\frac{\lvert\xt{p}\rvert^3}{\mathfrak{g}^3(\xt{p})}<+\infty.
\end{equation}
This assumption is satisfied if $\lvert \xt{p}\rvert =\mathcal{O}\big(\mathfrak{g}(\xt{p})\big)$ as $\xt{p} \to 0$ for instance since $\widehat{R}_0\in L^1(\mathbb{R}^d)$. For the sake of simplicity, we assume throughout this paper that 
\begin{equation}\label{SDCAreg}
\frac{\widehat{R}_0(\xt{p})}{\mathfrak{g}(\xt{p})}\sim \frac{\sigma}{\lvert \xt{p}\rvert^{d+\theta}}\quad \text{as }\xt{p}\to0, \text{ with }\theta\in(0,1),
\end{equation}
and $\sigma>0$. This last assumption is used in the proof of Theorem \ref{regularity}. For example, in \cite{bal2} the authors have supposed that
\begin{equation}\label{hyplenya}
\mathfrak{g}(\xt{p})=\nu \lvert \xt{p}\rvert ^{2\beta}\quad\text{and}\quad \widehat{R}_0(\xt{p})=\frac{a(\xt{p})}{\lvert\xt{p}\rvert^{d+2\alpha-2}}.
\end{equation}
Here, $a$ is a continuous function with compact support and $a(0)>0$, $\nu>0$, $\beta\in(0,1/2]$, $\alpha\in(1/2,1)$, and $\alpha+\beta>1$. These assumptions permit to model a random field $V(t,\xt{x})$ with spatial long-range correlations for each time $t\geq 0$ and with slowly decaying correlations in time.

\subsection{Wigner transform}\label{wignersection}

To study the asymptotic phase space  energy propagation of the solution $\phi_\e$ of the Schrödinger equation \eqref{schrodingereq}, let us consider the Wigner distribution of $\phi_\e$ averaged with respect to the randomness of the initial data:
\[W_\e (t,\xt{x},\xt{k})=\frac{1}{(2\pi)^d}\int_{\mathbb{R}^d\times S}d\xt{y}\mu(d\zeta)e^{i\xt{k}\cdot\xt{y}} \phi_\e \Big(t,\xt{x}-\e\frac{\xt{y}}{2},\zeta\Big)\overline{\phi_\e \Big(t,\xt{x}+\e\frac{\xt{y}}{2},\zeta\Big)}.\]
We refer to \cite{gerard, lions} for the basic properties of the Wigner distribution. The Wigner distribution satisfies the following evolution equation
\begin{equation}\label{wignereq}\begin{split}
\partial_t W_\e(t,\xt{x},\xt{k})&+\xt{k}\cdot\nabla_{\xt{x}}W_\e(t,\xt{x},\xt{k})=\\
&\frac{1}{\e^{\frac{1+\gamma}{2}}}\int_{\mathbb{R}^d}\frac{\widehat{V}\Big(\frac{t}{\e^{1+\gamma}},d\xt{p}\Big)}{(2\pi)^d i} e^{i\xt{p}\cdot\xt{x}/\e}\Big( W_\e\Big(t,\xt{x},\xt{k}-\frac{\xt{p}}{2}\Big)-W_\e\Big(t,\xt{x},\xt{k}+\frac{\xt{p}}{2}\Big)\Big),
\end{split}\end{equation}
with initial conditions $W_\e(0,\xt{x},\xt{k})=W_{0,\e}(\xt{x},\xt{k})$, where $W_{0,\e}$ is the Wigner transform of the initial data $\phi_{0,\e}$ of \eqref{schrodingereq}. Equation \eqref{wignereq} can be recast in the weak sense as follows: $\forall \lambda\in \mathcal{C}^{\infty}_0(\mathbb{R}^{2d})$,
\[
\big<W_\e(t),\lambda\big>_{L^2(\mathbb{R}^{2d})}-\big<W_\e(0),\lambda\big>_{L^2(\mathbb{R}^{2d})}=\int_0^t \big<W_\e, \xt{k}\cdot\nabla_{\xt{x}}\lambda+\frac{1}{\e^{\frac{1+\gamma}{2}}}\mathcal{L}_\e(s)\lambda\big>_{L^2(\mathbb{R}^{2d})}ds,
\]
where
\begin{equation}\label{lepsilon}
\mathcal{L}_\e\lambda(t,\xt{x},\xt{k})=\frac{1}{(2\pi)^d i}\int_{\mathbb{R}^d}\widehat{V}\Big(\frac{t}{\e^{1+\ga}},d\xt{p}\Big) e^{i\xt{p}\cdot\xt{x}/\e}\Big( \lambda\Big(\xt{x},\xt{k}-\frac{\xt{p}}{2}\Big)-\lambda\Big(\xt{x},\xt{k}+\frac{\xt{p}}{2}\Big)\Big).
\end{equation}
Here,  $\mathcal{C}^{\infty}_0(\mathbb{R}^{2d})$ stands for the set of smooth functions with compact supports. 
In what follows, we assume that $W_{0,\e}$ converges weakly in $L^2(\mathbb{R}^{2d})$ to a limit $W_0$, that is, 
\begin{equation}\label{weakcvinit}
\forall \lambda\in L^2(\mathbb{R}^{2d}),\quad \lim_\e\big<W_{0,\e},\lambda\big>_{L^2(\mathbb{R}^{2d})}= \big<W_{0},\lambda\big>_{L^2(\mathbb{R}^{2d})}.
\end{equation}
 Let us recall that by the Banach-Steinhaus Theorem $W_{0,\e}$ is bounded in $L^2(\mathbb{R}^{2d})$ uniformly in $\e$. The main reason to introduce the additional randomness through the initial data $\phi_{0,\e}$ and $(S,\mu(d\xt{q}))$, is to make possible the weak convergence \eqref{weakcvinit}. Let us give an example of such a situation. Let $\mu(\xt{q})$ be a nonnegative rapidly decreasing function such that $\|\mu\|_{L^1(\mathbb{R}^d)}=1$. Then, $(\mathbb{R}^d,\mu(d\xt{q}))$ is a probability space, where $\mu(d\xt{q})=\mu(\xt{q})d\xt{q}$. Let $\phi_{0,\e}(\xt{x},\xt{q})=\phi_0(\xt{x})e^{-i\xt{q}\cdot \xt{x}/\e}$, and $\phi_0$ be a sufficiently smooth function. In this case, we have $\|W_{0,\e}\|_{L^2(\mathbb{R}^{2d})}\leq \|\mu\|_{L^2(\mathbb{R}^d)}\|\phi_0\|_{L^4(\mathbb{R}^d)}$, and $W_{0,\e}$ converge weakly in $L^2(\mathbb{R}^{2d})$ to $W_0(\xt{x},\xt{k})=\frac{1}{(2\pi)^d}\widehat{\mu}(\xt{k})\lvert \phi_0(\xt{x})\rvert^2$. In this example, the initial data $\phi_{0,\e}(\xt{x},\xt{q})$ depends on the phase vector $\xt{q}$, which is distributed according to the probability distribution $\mu(d\xt{q})$. Then, the averaged Wigner transform consists in taking the expectation of the Wigner transform with respect to the probability distribution $\mu(d\xt{q})$ of the phase vector $\xt{q}$.

\section{Asymptotic Evolution in Long Range Random Media}\label{asymptsec}

The two following sections present the asymptotic behavior of the phase and the phase space energy density of the solution of \eqref{schrodingereq}. The results in these two sections permit to show the qualitative difference between the random effects induced on a wave propagating in long-range random media in time and in rapidly decorrelating random media in time $\eqref{SDCAcor}$.

\subsection{Phase Evolution}

In this section we present the asymptotic behavior of the phase of $\phi_\e$ solution of \eqref{schrodingereq}. Theorem \ref{thphase} stated below has been shown \cite{bal2} in the case $\ga=0$. Nevertheless, the proof of Theorem \ref{thphase} remains the same as the one of \cite[Theorem 1.2]{bal2}, but we state this result in the case $\ga\in(0,1)$ in order to provide a self-contained presentation. Under the long-range correlation assumption in time and medium parameters given by \eqref{hyplenya}, we have the following result.
\begin{thm}\label{thphase}
Let us assume that the autocorrelation function $R(t,\xt{x})$ of the random perturbation is given by  \eqref{rtilde}, \eqref{covfunc}, and \eqref{hyplenya}, and let
\[\kappa_0=\frac{\alpha+2\beta-1}{2\beta}\quad \text{and}\quad \kappa_\ga=\frac{\kappa_0}{1-\ga\Big(\frac{\alpha+\beta-1}{\beta}\Big)}\quad \text{for }\ga\in(0,1).\]
 Let us consider the process $\widehat{\zeta}_{\kappa_\ga,\e}(t,\xi)$ defined by 
\[  \widehat{\zeta}_{\kappa_\ga,\e}(t,\xi)=\frac{1}{\e^{d/(2\kappa_\ga)}}\widehat{\phi}_{\kappa_\ga,\e}\Big(t,\frac{\xi}{\e^{1/(2\kappa_\ga)}}\Big),\] 
with
\[\phi_{\kappa_\ga,\e}(t,\xt{x})=\phi\Big(\frac{t}{\e^{1/(2\kappa_\ga)}},\frac{\xt{x}}{\e^{1/(2\kappa_\ga)}}\Big)\]
and where $\phi$ satisfies 
\[\begin{split}
i \partial_t \phi+\frac{1}{2}\Delta_{\xt{x}} \phi-\e^{\frac{1-\ga}{2}} V\Big(\frac{t}{\e^{\gamma}},\xt{x}\Big)\phi& =0,\\
\phi(0,\xt{x})&=\phi_0(\xt{x}).
\end{split}\]
Then, for each $t\geq0$, $\xi\in\mathbb{R}^d$ fixed, and $\ga\in(0,1)$, $\widehat{\zeta}_{\kappa_\ga,\e}(t,\xi)$ converges in distribution to
\[\zeta_0(t,\xi)=\widehat{\phi}_0(\xt{x})\exp\Big(i\sqrt{D(\kappa_\ga,\xi)}B_{\kappa_\ga}(t)\Big),\]
where $(B_{\kappa_\ga}(t))_t$ is a standard fractional Brownian motion with,
\[D(\kappa_\ga,\xi)=D(\kappa_\ga)=\frac{a(0)\Omega_d}{(2\pi)^d\kappa_\ga(2\kappa_\ga-1)}\int_0^{+\infty}d\rho \frac{e^{-\nu\rho^{2\beta}}}{\rho^{2\alpha-1}}.\]
Here, $\Omega_d$ is the surface area of the unit sphere in $\mathbb{R}^d$, and $\xt{e}_1\in \mathbb{S}^{d-1}$.
\end{thm}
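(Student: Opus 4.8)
The plan is to reduce the statement to the long-time asymptotics of a single spatial Fourier mode and then to recognise the accumulated random phase as a centred Gaussian functional of $V$ whose variance exhibits the self-similar growth of a fractional Brownian motion. First I would unwind the rescalings. Writing $\delta=\e^{1/(2\kappa_\ga)}$ and using $\phi_{\kappa_\ga,\e}(t,\xt{x})=\phi(t/\delta,\xt{x}/\delta)$, a change of variables in the spatial Fourier transform gives $\widehat{\phi}_{\kappa_\ga,\e}(t,\xi)=\delta^d\,\widehat\phi(t/\delta,\delta\xi)$, hence
\[
\widehat{\zeta}_{\kappa_\ga,\e}(t,\xi)=\frac{1}{\delta^d}\widehat{\phi}_{\kappa_\ga,\e}\Big(t,\frac{\xi}{\delta}\Big)=\widehat\phi\Big(\frac{t}{\delta},\xi\Big).
\]
Thus $\widehat{\zeta}_{\kappa_\ga,\e}(t,\xi)$ is simply the Fourier mode $\xi$ of the solution of \eqref{schrodingereq0} observed at the long time $T_\e=t\,\e^{-1/(2\kappa_\ga)}$. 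Taking the spatial Fourier transform of \eqref{schrodingereq0} and passing to the interaction picture $\widehat u(t,\xi)=e^{i|\xi|^2t/2}\widehat\phi(t,\xi)$ removes the free evolution and leaves
\[
i\partial_t\widehat u(t,\xi)=\e^{\frac{1-\ga}{2}}\frac{1}{(2\pi)^d}\int \widehat V\Big(\frac{t}{\e^\ga},d\xt{p}\Big)\,e^{\frac{i}{2}(|\xi|^2-|\xi-\xt{p}|^2)t}\,\widehat u(t,\xi-\xt{p}).
\]

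Second, I would argue that at the time scale $T_\e$, which is shorter than the scattering scale $\e^{-1}$ since $\kappa_\ga>1/2$ (a consequence of $\alpha+\beta>1$), no mass leaves the mode $\xi$, so the entire effect is a phase modulation. Concretely I would run a Duhamel expansion of the equation above and show that the dominant contribution comes from the forward, near-diagonal interactions, where the small-$\xt{p}$ singularity of $\widehat R_0$ responsible for the long-range correlations concentrates the momentum integral. Resumming these interactions produces the exponential of a mean-zero Gaussian functional $\Theta_\e(t,\xi)$ of $\widehat V$, while the genuine scattering (order-one momentum transfer) contributes only at the longer scale $\e^{-1}$ and is therefore negligible here. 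This yields
\[
\widehat\zeta_{\kappa_\ga,\e}(t,\xi)=\widehat\phi_0(\xi)\exp\big(i\,\Theta_\e(t,\xi)\big)+o_{\Pro}(1),
\]
with $\Theta_\e(t,\xi)$ real, linear in $\widehat V$, and hence exactly Gaussian to leading order.

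Third, since $\Theta_\e(t,\xi)$ is Gaussian and convergence is asserted for fixed $(t,\xi)$, it suffices to match the single variance $\E[(\sqrt{D(\kappa_\ga)}B_{\kappa_\ga}(t))^2]=D(\kappa_\ga)\,t^{2\kappa_\ga}$; the full fractional-Brownian covariance would be needed only for a process-level statement in $t$. Using the covariance structure \eqref{covfunc}--\eqref{spectraldens}, $\mathrm{Var}(\Theta_\e(t,\xi))$ is a double time-integral of $R(s_1-s_2,\cdot)$, and with $R(\tau,\xt{p})=e^{-\mathfrak{g}(\xt{p})|\tau|}\widehat R_0(\xt{p})$ and the medium parameters \eqref{hyplenya} its behaviour is governed by the small-$\xt{p}$ form of $\widehat R_0/\mathfrak{g}$ in \eqref{SDCAreg}. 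The mechanism is that for small $\xt{p}$ the spectral gap $\mathfrak{g}(\xt{p})=\nu|\xt{p}|^{2\beta}$ is small, so temporal correlations persist over times of order $1/\mathfrak{g}(\xt{p})$; because $\widehat R_0/\mathfrak{g}$ is non-integrable by \eqref{SDCA}, integrating over the long horizon $[0,T_\e]$ produces anomalous, self-similar growth $\propto t^{2\kappa_\ga}$ rather than the linear growth of the mixing case. Balancing the powers of $|\xt{p}|$ coming from $\widehat R_0$, from $\mathfrak{g}$, and from the oscillatory phase then gives $\mathrm{Var}(\Theta_\e(t,\xi))\to D(\kappa_\ga)\,t^{2\kappa_\ga}$, with the radial factor $\int_0^{+\infty}e^{-\nu\rho^{2\beta}}\rho^{-(2\alpha-1)}\,d\rho$ and the prefactor $a(0)\Omega_d/[(2\pi)^d\kappa_\ga(2\kappa_\ga-1)]$ of the stated form. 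A Gaussian is determined by its variance, so applying the continuous mapping theorem to $z\mapsto\widehat\phi_0(\xi)e^{iz}$ gives $\widehat\zeta_{\kappa_\ga,\e}(t,\xi)\Rightarrow\widehat\phi_0(\xi)\exp(i\sqrt{D(\kappa_\ga)}B_{\kappa_\ga}(t))=\zeta_0(t,\xi)$.

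The main obstacle is the second step: rigorously showing that the higher-order Duhamel terms — the genuine scattering to other Fourier modes and the self-interaction corrections — vanish in probability at the long time $T_\e$, so that $\Theta_\e$ may be replaced by its leading Gaussian part. This requires quantitative bounds on the iterated oscillatory momentum integrals uniform in $\e$, for which the integrability conditions \eqref{longrange} are exactly tailored, together with care that the singularity of $\widehat R_0$ at $\xt{p}=0$ does not spoil these estimates. A secondary technical point is the precise extraction of the exponent $\kappa_\ga$ and the constant $D(\kappa_\ga)$ from the time-correlation integral, which is where \eqref{hyplenya}, \eqref{SDCAreg} and the restriction $\ga\in(0,1)$ enter through the balance between the coupling $\e^{(1-\ga)/2}$, the fast time scale $\e^{-\ga}$ of the potential, and the observation time $T_\e$.
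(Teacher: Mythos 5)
First, a point of comparison: the paper does not actually prove Theorem \ref{thphase}. It states explicitly that the proof is identical to that of \cite[Theorem 1.2]{bal2} and includes the statement only for self-containedness, so there is no internal proof to measure your argument against. Your outline does follow the same general strategy as that reference (Duhamel expansion in the interaction picture, identification of the accumulated phase as a linear, hence Gaussian, functional of $V$, and asymptotics of its variance), so the \emph{approach} is the right one.

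As a proof, however, the proposal has genuine gaps. The decisive step --- showing that at the time scale $t\,\e^{-1/(2\kappa_\ga)}$ every Duhamel term beyond the leading linear-in-$\widehat V$ phase is $o_{\Pro}(1)$, uniformly despite the non-integrable singularity of $\widehat R_0/\mathfrak{g}$ at $\xt{p}=0$ --- is only described, and you yourself flag it as ``the main obstacle.'' That step is where essentially all of the work in \cite{bal2} lies; without it the reduction to $\widehat\phi_0(\xi)e^{i\Theta_\e}$ is an ansatz, not a theorem. Likewise the variance asymptotics is asserted by ``balancing powers of $\lvert\xt{p}\rvert$'' rather than computed: the double time integral of $e^{-\mathfrak{g}(\xt{p})\lvert s_1-s_2\rvert/\e^\ga}\widehat R_0(\xt{p})$ over $[0,t\e^{-1/(2\kappa_\ga)}]^2$ must actually be carried out to produce the exponent of $t$, the constant $D(\kappa_\ga)$, and the verification that all powers of $\e$ cancel --- this is precisely where \eqref{hyplenya} and the definition of $\kappa_\ga$ enter, and it is the content of the theorem. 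Finally, your own reduction gives $\widehat{\zeta}_{\kappa_\ga,\e}(t,\xi)=\widehat\phi\big(t\e^{-1/(2\kappa_\ga)},\xi\big)$, which carries the deterministic factor $e^{-i\lvert\xi\rvert^2 t/(2\e^{1/(2\kappa_\ga)})}$; you remove it by passing to $\widehat u=e^{i\lvert\xi\rvert^2 t/2}\widehat\phi$ but never reinstate it, and as written the quantity cannot converge in distribution unless this rapidly oscillating phase is compensated (in \cite{bal2} the result is stated for the compensated amplitude). You should either track this factor explicitly or note that the convergence is for the free-evolution-compensated Fourier mode.
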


This result means that the phase of the field $\phi$ evolves on the scale $\e^{-1/(2\kappa_\ga)}$ which is smaller than the one on which evolves the phase space energy density as we will see below in Theorem \ref{thasymptotic}. The random perturbations induce a random phase modulation of the fields $\phi$ given by a fractional Brownian motion. This result shows an important difference with the case of media with rapidly decorrelating random perturbations in time \eqref{SDCAcor}, for which the phase and the phase space energy density evolve on the same scale $\e^{-1}$ \cite{bal2}.

Let us note that the scale of evolution of the random phase is shorter in the case $\ga>0$ than in the case $\ga=0$, that is $\e^{-1/(2\kappa_\ga)}\ll \e^{-1/(2\kappa_0)}\ll \e^{-1}$. This fact comes from the random perturbations which are stronger in the case $\ga>0$ than in the case $\ga=0$. These strong perturbations in time lead to the development of a random phase modulation earlier than in the case $\ga=0$. However, the scale of evolution of the phase space energy distribution is the same $\e^{-1}$ in both cases, $\ga>0$ and $\ga=0$.

\subsection{Limit Theorem}\label{limitth}

This section presents the main result of this paper. Theorem \ref{thasymptotic} describes the asymptotic behavior of the phase space energy distribution of $\phi_\e$ solution of the random Schrödinger equation \eqref{schrodingereq}. First, let us introduce some notations. Thanks to \eqref{weakcvinit}, let $r=\sup_{\e}\|W_{0,\e}\|_{L^2(\mathbb{R}^{2d})}<\infty$,
\[\mathcal{B}_{r}=\left\{\lambda \in L^2(\mathbb{R}^{2d}), \|\lambda\|_{L^2(\mathbb{R}^{2d})}\leq r\right\}\] 
be the closed ball with radius $r$, and $\{g_n, n\geq 1\}$ be a dense subset of $\mathcal{B}_{r}$. We equip  $\mathcal{B}_{r}$ with the distance $d_{\mathcal{B}_{r}}$ defined by
\[d_{\mathcal{B}_{r}}(\lambda, \mu)=\sum_{j=1}^{+\infty}\frac{1}{2^j}\left\lvert\big<\lambda-\mu,g_n\big>_{L^2(\mathbb{R}^{2d})}\right\rvert\]
$\forall (\lambda,\mu)\in(\mathcal{B}_{r})^2$, so that $(\mathcal{B}_{r} ,d_{\mathcal{B}_{r}})$ is a compact metric space. Therefore, $(W_\e)_\e$ is a family of process with values in $(\mathcal{B}_{r} ,d_{\mathcal{B}_{r}})$, since $\|W_\e (t)\|_{L^2(\mathbb{R}^{2d})}=\|W_{0,\e}\|_{L^2(\mathbb{R}^{2d})}$.

\begin{thm}\label{thasymptotic}
The family $(W_\e)_{\e\in(0,1)}$, solution of the transport equation \eqref{wignereq}, converges in probability on 
$\mathcal{C}([0,+\infty), (\mathcal{B}_{r} ,d_{\mathcal{B}_{r}}))$ as $\e \to 0$ to a limit denoted by $W$. More precisely, $\forall T>0$ and $\forall \eta>0$,
\[\lim_{\e\to 0}\Pro\left(\sup_{t\in[0,T]}d_{\mathcal{B}_r}(W_\e(t),W(t))>\eta\right)=0.\]
$W$ is the unique weak solution uniformly bounded in $L^2(\mathbb{R}^{2d})$ of the radiative transfer equation
\begin{equation}\label{radtranseq}
\partial_t W+\xt{k}\cdot \nabla_{\xt{x}} W=\mathcal{L}W,
\end{equation} 
with $W(0,\xt{x},\xt{k})=W_0(\xt{x},\xt{k})$. 
Here, $\mathcal{L}$ is defined by
\begin{equation}\label{formgene}
\mathcal{L}\varphi(\xt{k})=\int d\xt{p}\sigma(\xt{p}-\xt{k})\big(\varphi(\xt{p})-\varphi(\xt{k})\big),
\end{equation}
with $\varphi\in\mathcal{C}^\infty (\mathbb{R}^d)$ and where 
\begin{equation}\label{sigmadef}\sigma(\xt{p})=\frac{2\widehat{R}_0(\xt{p})}{(2\pi)^d\mathfrak{g}(\xt{p})}.\end{equation}
\end{thm}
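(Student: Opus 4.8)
The plan is to follow the classical perturbed-test-function and martingale-problem route of Papanicolaou--Varadhan and Kushner, adapted to the nonintegrable scattering kernel. The argument splits into three independent pieces: tightness of the family $(W_\e)_\e$ in $\mathcal{C}([0,+\infty),(\mathcal{B}_r,d_{\mathcal{B}_r}))$, identification of every limit point as a solution of the martingale problem associated to the radiative transfer equation \eqref{radtranseq}, and uniqueness of the weak $L^2$-bounded solution of \eqref{radtranseq}. Together these give convergence in distribution, and since the limit $W$ is deterministic this automatically upgrades to the stated convergence in probability.

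First I would fix $\lambda\in\mathcal{C}^\infty_0(\mathbb{R}^{2d})$ and treat the singular drift $\e^{-(1+\ga)/2}\mathcal{L}_\e(s)\lambda$ in the weak formulation of \eqref{wignereq}. The rescaled potential $\widehat{V}(\cdot/\e^{1+\ga},\cdot)$ is a fast Ornstein--Uhlenbeck-type field whose conditional law is fully described by \eqref{markovesp}--\eqref{markovvar}, so the associated Poisson equations can be solved explicitly by integrating the conditional semigroup. Concretely I would introduce a first corrector
\[
\lambda_1^\e(t)=\e^{\frac{1+\ga}{2}}\int_0^{+\infty}\E\big[\mathcal{L}_\e(t+\e^{1+\ga}r)\lambda\,\big\vert\,\mathcal{F}_{t/\e^{1+\ga}}\big]\,dr,
\]
together with a second corrector $\lambda_2^\e=O(\e^{1+\ga})$ of the same type, chosen so that
\[
F_\e(t)=\big<W_\e(t),\lambda+\lambda_1^\e(t)+\lambda_2^\e(t)\big>_{L^2(\mathbb{R}^{2d})}
\]
is an approximate martingale. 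The $O(\e^{-(1+\ga)/2})$ term cancels by construction of $\lambda_1^\e$; the surviving $O(1)$ term, after performing the $dr$-integration (which produces the weights $1/\mathfrak{g}(\xt{p})$ and $1/(2\mathfrak{g}(\xt{p}))$ via \eqref{markovesp}--\eqref{markovvar}) and using the pairing $\delta(\xt{p}_1+\xt{p}_2)$ to kill the oscillatory phases $e^{\pm i\xt{p}\cdot\xt{x}/\e}$, converges precisely to $\xt{k}\cdot\nabla_{\xt{x}}\lambda+\mathcal{L}\lambda$ with $\mathcal{L}$ and $\sigma$ as in \eqref{formgene}--\eqref{sigmadef}. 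Tightness follows from the same decomposition: $F_\e$ splits into a martingale with controlled quadratic variation plus a bounded-variation part, while the correctors are $o(1)$ in the relevant norm, yielding equicontinuity in $t$; compactness of $(\mathcal{B}_r,d_{\mathcal{B}_r})$ then closes any standard tightness criterion through the countable family $\{g_n\}$.

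For uniqueness I would exploit the jump (L\'evy-type) structure of $\mathcal{L}$. Testing the difference $W-\widetilde W$ of two $L^2$-bounded weak solutions against itself, the transport operator $\xt{k}\cdot\nabla_{\xt{x}}$ is skew-adjoint, while $\big<\mathcal{L}\varphi,\varphi\big>_{L^2(\mathbb{R}^d)}\leq 0$ because $\sigma\geq0$ and the kernel acts only through the difference $\varphi(\xt{p})-\varphi(\xt{k})$; a Gronwall argument then forces $W=\widetilde W$.

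The essential difficulty running through every step is that the scattering coefficient is not integrable, $\int d\xt{p}\,\sigma(\xt{p}-\xt{k})=+\infty$, so none of the correctors or error terms is finite when estimated naively. The resolution is to keep the momentum differences $\lambda(\xt{x},\xt{k}\mp\xt{p}/2)$ unexpanded and to exploit the cancellations they generate: $\lambda(\xt{k}-\xt{p}/2)-\lambda(\xt{k}+\xt{p}/2)=O(\lvert\xt{p}\rvert)$, and the successive correctors bring in $\xt{k}$-gradients of order $\lvert\xt{p}\rvert$, $\lvert\xt{p}\rvert^2$ and $\lvert\xt{p}\rvert^3$ weighted respectively by $1/\mathfrak{g}(\xt{p})$, $1/\mathfrak{g}^2(\xt{p})$ and $1/\mathfrak{g}^3(\xt{p})$. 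Controlling these three contributions uniformly near the singularity $\xt{p}=0$ is exactly where the moment hypothesis \eqref{longrange} is used, and verifying that every remainder term is negligible there is the main obstacle of the whole argument.
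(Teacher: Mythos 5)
Your tightness and identification steps follow essentially the same route as the paper: the Kushner pseudogenerator formalism with first and second correctors built by integrating the conditional semigroup \eqref{markovesp}--\eqref{markovvar} of the Ornstein--Uhlenbeck-type field, the $1/\mathfrak{g}(\xt{p})$ and $1/(2\mathfrak{g}(\xt{p}))$ weights arising from the $du$-integration, the second-order (quadratic-variation) terms vanishing through the oscillatory phases $e^{i\xt{p}\cdot(\xt{x}_1-\xt{x}_2)/\e}$ and the Riemann--Lebesgue lemma (which is what makes the limit deterministic), and the moment condition \eqref{longrange} keeping every corrector finite despite $\int\sigma=+\infty$. That part of your proposal is sound and matches the paper.

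The genuine gap is in your uniqueness argument. You propose to test the difference $W-\widetilde W$ of two weak solutions against itself, use skew-adjointness of $\xt{k}\cdot\nabla_{\xt{x}}$ and $\big<\mathcal{L}\varphi,\varphi\big>\leq 0$, and conclude by Gronwall. But $W-\widetilde W$ is only an element of $L^\infty([0,T],L^2(\mathbb{R}^{2d}))$, and the weak formulation of \eqref{radtranseq} holds only against test functions in $\mathcal{C}^\infty_0(\mathbb{R}^{2d})$; the solution itself is not an admissible test function. Worse, because $\sigma$ is not integrable, $\mathcal{L}\varphi$ is not even defined for a generic $L^2$ function: the integral $\int d\xt{p}\,\sigma(\xt{p})(\varphi(\xt{k}+\xt{p})-\varphi(\xt{k}))$ converges only if $\varphi$ has a modulus of continuity compensating the singularity of $\sigma$ at $\xt{p}=0$ (here via $\int\widehat{R}_0(\xt{p})\lvert\xt{p}\rvert/\mathfrak{g}(\xt{p})\,d\xt{p}<\infty$ and a gradient bound). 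So the pairing $\big<\mathcal{L}(W-\widetilde W),W-\widetilde W\big>$ that your energy identity requires has no a priori meaning, and justifying the chain rule $\frac{d}{dt}\|W-\widetilde W\|_{L^2}^2=2\big<W-\widetilde W,\partial_t(W-\widetilde W)\big>$ would demand a mollification-and-commutator argument that is precisely where the difficulty sits. The paper avoids this entirely by a duality argument: it truncates the generator to $\mathcal{L}_N$ (jumps $\lvert\xt{p}\rvert>1/N$, finite total cross-section $\Sigma_N$), solves the backward equation explicitly by a collision series expansion to produce a smooth-in-time test function $\tilde W_N$ with uniform bounds on $\nabla_{\xt{x}}\tilde W_N$ and $\nabla_{\xt{k}}\tilde W_N$, obtains $\big<W(T),\lambda\big>=\int_0^T\big<W(s),(\mathcal{L}-\mathcal{L}_N)\tilde W_N(s)\big>ds$, and shows the right-hand side is $O\big(\int_{\lvert\xt{p}\rvert<1/N}\widehat{R}_0(\xt{p})\lvert\xt{p}\rvert/\mathfrak{g}(\xt{p})\,d\xt{p}\big)\to0$. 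You would need to supply an equivalent regularization or duality step for your uniqueness proof to close.
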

Here, $\mathcal{C}^\infty _0(\mathbb{R}^d)$ stands for the infinite differentiable functions with compact support. Let us precise that $W$ is a weak solution of the transfer equation \eqref{radtranseq} means that 
\[\big<W(t),\lambda\big>_{L^2(\mathbb{R}^d)}-\big<W_0,\lambda\big>_{L^2(\mathbb{R}^d)}-\int_0^t \big<W(s),\xt{k}\cdot\nabla_\xt{x} \lambda +\mathcal{L}\lambda\big>_{L^2(\mathbb{R}^d)}=0,\] 
$\forall t\geq 0$ and $\forall \lambda\in\mathcal{C}^\infty_0(\mathbb{R}^d)$.

The topology gererated by the metric $d_{\mathcal{B}_r}$  is equivalent to the weak topology on $L^2(\mathbb{R}^{2d})$ restricted to $\mathcal{B}_r$. We cannot expect a convergence on $L^2(\mathbb{R}^{2d})$ equipped with the strong topology. In fact, the following conservation relation $\|W_\e(t)\|_{L^2(\mathbb{R}^{2d})}=\|W_{0,\e}\|_{L^2(\mathbb{R}^{2d})}$ cannot be satisfied by the limit $W$. Moreover, let us note that the Wigner distribution $W_\e$ is self-averaging as $\e$ goes to $0$, that is the limit $W$ is not random anymore. This self-averaging phenomenon of the Wigner distribution has already been observed in several studies \cite{bal3, bal,bal4} and is very useful in applications.

The proof of Theorem \ref{thasymptotic} is given in Section \ref{proofthlimit} and is based on an asymptotic analysis using perturbed-test-function and martingale techniques.

Equation \eqref{radtranseq} describes the asymptotic evolution of the phase space  energy distribution of the field $\phi_\e$ solution of the random Schrödinger equation \eqref{schrodingereq}. The nonlocal transfer operator $\mathcal{L}$ defined by \eqref{formgene} describes the energy diffusion caused by the random perturbations, and the transfer coefficient $\sigma(\xt{p}-\xt{k})$ describes the energy transfer between the modes $\xt{k}$ and $\xt{p}$.  

Let us note that the result of Theorem \ref{thasymptotic} does not depend on whether $\int d\xt{p} \sigma(\xt{p})$ is finite or not. In other words, the radiative transfer equation \eqref{radtranseq} is valid in the two case, slowly and rapidly decaying correlations in time \eqref{SDCAcor}. Consequently, the phase space density energy distribution evolves on the same scale $\e^{-1}$ in both case, rapidly and slowly decaying correlations in time. However, as we will see in Section \ref{regularizing}, the solutions of these equations in the two cases behave in different ways. As it has been discussed in Section \ref{SDCAsec}, in the case of rapidly decaying correlations in time \eqref{SDCAcor}, that is $\int d\xt{p} \sigma(\xt{p})<+\infty$, the radiative transfer equation \eqref{radtranseq} has the same properties as in the mixing case addressed in \cite{bal6}. In the case of slowly decaying correlations in time \eqref{SDCAcor}, that is $\int d\xt{p} \sigma(\xt{p})=+\infty$, we observe a regularizing effect of the solution of  \eqref{radtranseq} which cannot be observed in the case of rapidly decaying correlations in time. As we will see in Theorem \ref{regularity}, the unique solution $W$ of \eqref{radtranseq} is actually the unique classical solution of \eqref{radtranseq}.

\section{Regularizing Effects of the Radiative Transfer Equation \eqref{radtranseq}}\label{regularizing}

In this section we investigate the regularizing effect of the radiative transfer equation \eqref{radtranseq}. We show that the solution of \eqref{radtranseq} becomes instantaneously a smooth function, that is upon $t>0$, despite the nonsmoothness of its initial data. 

\begin{thm}\label{regularity}
Let $W_0\in L^2 (\mathbb{R}^{2d})$ and $W$ be the unique weak solution of the radiative transfer equation \eqref{radtranseq}. Then, under \eqref{SDCAreg} and $\forall t_0>0$, we have 
\[
W\in \mathcal{C}^0\Big( (0,+\infty), \bigcap_{k\geq 0} H^k (\mathbb{R}^{2d}) \Big)\cap L^\infty \Big([t_0,+\infty),\bigcap _{k\geq 0} H^k (\mathbb{R}^{2d})\Big),
\]
so that we also have 
\[W\in \mathcal{C}^0\big( (0,+\infty),\mathcal{C}^\infty(\mathbb{R}^{2d})\big)\cap L^\infty \big( [t_0,+\infty),\mathcal{C}^\infty(\mathbb{R}^{2d})\big).\] 
Moreover, $W$ is given by
\[W(t,\xt{x},\xt{k})=\frac{1}{(2\pi)^{2d}}\int d\xt{y}d\xt{q} e^{i(\xt{x}\cdot\xt{y}+\xt{k}\cdot\xt{q})}e^{\int_0^t du \Psi(\xt{q}+u\xt{y})}\widehat{W}_0(\xt{y},\xt{q}+t\xt{y}),\]
where 
\begin{equation}\label{caracexp}\Psi(\xt{q})=\int d\xt{p}\sigma(\xt{p})(e^{i\xt{p}\cdot\xt{q}}-1).\end{equation}

\end{thm}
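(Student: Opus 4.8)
The plan is to solve the radiative transfer equation \eqref{radtranseq} explicitly by Fourier transforming in both $\xt{x}$ and $\xt{k}$, and then to read off the regularity from the decay of the resulting Fourier multiplier. Denote by $\widehat{W}(t,\xt{y},\xt{q})$ the Fourier transform of $W(t,\xt{x},\xt{k})$, with $\xt{y}$ dual to $\xt{x}$ and $\xt{q}$ dual to $\xt{k}$. Since $\mathcal{L}$ in \eqref{formgene} acts as a convolution-difference operator in $\xt{k}$ only, after the change of variable $\xt{p}\mapsto\xt{p}-\xt{k}$ one checks that $\mathcal{L}$ becomes multiplication by the symbol $\Psi(\xt{q})$ of \eqref{caracexp}, while the transport term $\xt{k}\cdot\nabla_\xt{x}W$ becomes the drift $-\xt{y}\cdot\nabla_\xt{q}\widehat{W}$. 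Thus \eqref{radtranseq} is equivalent to the first-order linear equation $\partial_t\widehat{W}-\xt{y}\cdot\nabla_\xt{q}\widehat{W}=\Psi(\xt{q})\widehat{W}$, which I would integrate along its characteristics $\xt{q}(s)=\xt{q}+(t-s)\xt{y}$. This yields exactly the claimed representation, and since by Theorem \ref{thasymptotic} the weak solution of \eqref{radtranseq} is unique in $L^2(\mathbb{R}^{2d})$, it suffices to verify that the function so defined is a weak solution, which is immediate from the fact that its Fourier transform solves the characteristic ODE.

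The regularity rests entirely on the behavior of $\Psi$. Because $\sigma$ is even, $\Psi(\xt{q})=-\int d\xt{p}\,\sigma(\xt{p})(1-\cos(\xt{p}\cdot\xt{q}))$ is real, nonpositive, and vanishes only at $\xt{q}=0$. The compact support of $\widehat{R}_0$ makes the integral converge, and a rescaling argument exploiting the singularity \eqref{SDCAreg}, $\sigma(\xt{p})\sim c\lvert\xt{p}\rvert^{-d-\theta}$ near $0$, shows that $\Psi(\xt{q})\sim -C\lvert\xt{q}\rvert^\theta$ as $\lvert\xt{q}\rvert\to\infty$; in particular $\lvert\Psi(\xt{w})\rvert\geq c_0\lvert\xt{w}\rvert^\theta$ for $\lvert\xt{w}\rvert\geq R_0$. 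The decisive step is the lower bound on the accumulated damping along a characteristic, $\int_0^t\lvert\Psi(\xt{q}+u\xt{y})\rvert\,du\geq c(t)(\lvert\xt{y}\rvert^\theta+\lvert\xt{q}\rvert^\theta)-C(t)$. I would prove it geometrically: writing $\lvert\xt{q}+u\xt{y}\rvert^2=h^2+(s+u\lvert\xt{y}\rvert)^2$ with $s$ the component of $\xt{q}$ along $\xt{y}$ and $h$ the perpendicular distance, the bounds $\lvert\xt{q}+u\xt{y}\rvert\geq h$ and $\lvert\xt{q}+u\xt{y}\rvert\geq\lvert s+u\lvert\xt{y}\rvert\rvert$ give, after the substitution $w=s+u\lvert\xt{y}\rvert$ and the elementary estimate $\int_s^{s+t\lvert\xt{y}\rvert}\lvert w\rvert^\theta\,dw\geq(t\lvert\xt{y}\rvert)^{\theta+1}/(2^\theta(\theta+1))$, the two controls $\int_0^t\lvert\xt{q}+u\xt{y}\rvert^\theta\,du\geq t\,h^\theta$ and $\int_0^t\lvert\xt{q}+u\xt{y}\rvert^\theta\,du\geq c_\theta t^{1+\theta}\lvert\xt{y}\rvert^\theta$; combining them with $\lvert\xt{q}\rvert^\theta\leq\lvert s\rvert^\theta+h^\theta$ produces the stated growth in both $\lvert\xt{y}\rvert$ and $\lvert\xt{q}\rvert$.

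Granting this estimate, the exponential factor obeys $\exp\big(\int_0^t\Psi(\xt{q}+u\xt{y})\,du\big)\leq e^{C(t)}e^{-c(t)(\lvert\xt{y}\rvert^\theta+\lvert\xt{q}\rvert^\theta)}$, which decays faster than any polynomial in $(\xt{y},\xt{q})$ for every fixed $t>0$. Hence for each $k$ the multiplier $(1+\lvert\xt{y}\rvert^2+\lvert\xt{q}\rvert^2)^{k/2}\exp\big(\int_0^t\Psi\,du\big)$ is bounded, and since $\widehat{W}_0\in L^2$ and the shift $\xt{q}\mapsto\xt{q}+t\xt{y}$ preserves the $L^2$ norm, a change of variables gives $\|W(t)\|_{H^k(\mathbb{R}^{2d})}\leq C_k(t)\|W_0\|_{L^2(\mathbb{R}^{2d})}$ for all $k$, so $W(t)\in\bigcap_{k\geq0}H^k$. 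Because the integrand $\lvert\Psi\rvert$ is nonnegative, the damping only increases with $t$, so $C_k(t)\leq C_k(t_0)$ for $t\geq t_0$, yielding the $L^\infty([t_0,+\infty),H^k)$ bound; continuity of $t\mapsto W(t)$ in $H^k$ on $(0,+\infty)$ follows from dominated convergence, using the locally uniform bound above as dominating function together with the $L^2$-continuity of translation to handle the shift in $\widehat{W}_0$. Finally, the Sobolev embedding $\bigcap_k H^k(\mathbb{R}^{2d})\hookrightarrow\mathcal{C}^\infty(\mathbb{R}^{2d})$ upgrades these statements to the claimed $\mathcal{C}^\infty$ regularity. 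I expect the geometric lower bound of the second paragraph to be the main obstacle, since one must rule out the degenerate situation in which the characteristic segment lingers near the origin where $\Psi$ is small, uniformly in the direction and length of the segment.
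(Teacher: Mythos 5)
Your proof is correct, and while its skeleton (explicit Fourier--characteristics representation, a lower bound on the accumulated damping $\int_0^t\lvert\Psi(\xt{q}+u\xt{y})\rvert\,du$ growing like $\lvert\xt{q}\rvert^\theta+\lvert\xt{y}\rvert^\theta$, then weighted $L^2$ multiplier estimates and Sobolev embedding) coincides with the paper's, the technical core is handled by a genuinely different route. The paper's key Lemma never isolates a pointwise bound on the symbol: it keeps the double integral $\int_0^t du\int d\xt{p}\,\sigma(\xt{p})(1-\cos(\xt{p}\cdot(\xt{q}+u\mu\xt{y})))$, restricts $\xt{p}$ to $\lvert\xt{p}\rvert<1/(2\lvert\xt{q}\rvert)\wedge 1/(2\mu t\lvert\xt{y}\rvert)$ where $1-\cos$ is quadratic, computes the $u$-integral of $\lvert\xt{p}\cdot(\xt{q}+u\mu\xt{y})\rvert^2$ exactly, and makes the cross term harmless by a completion-of-squares identity that is nonnegative only for $t\in(3/(2\mu),2/\mu)$; the price is a time-window restriction (removed afterwards by tuning $\mu$), a bound of max-type (decay in $\lvert\xt{q}\rvert^\theta$ \emph{or} $\lvert\xt{y}\rvert^\theta$ according to indicators), and a four-region splitting of $(\xt{y},\xt{q})$-space in the $H^k$ estimate. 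You instead integrate in $\xt{p}$ first to obtain the stable-like symbol bound $\lvert\Psi(\xt{w})\rvert\geq c_0\lvert\xt{w}\rvert^\theta$ for $\lvert\xt{w}\rvert\geq R_0$, and then run a purely one-dimensional geometric estimate of $\int_0^t\lvert\xt{q}+u\xt{y}\rvert^\theta\,du$ along the line; this yields simultaneous decay in both variables for every $t>0$, with the near-origin degeneracy absorbed into the additive constant $C(t)$ (the line spends time at most $2R_0/\lvert\xt{y}\rvert$ in the ball of radius $R_0$), so the windowing trick and the case analysis disappear. Your version is the more modular and, to my eye, cleaner argument; the paper's has the minor advantage of never needing the intermediate asymptotics of $\Psi$ at infinity. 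Two small points to tighten in a full write-up: the control of the $\lvert s\rvert^\theta$ contribution (needed for $\lvert\xt{q}\rvert^\theta\leq\lvert s\rvert^\theta+h^\theta$) requires the extra observation that either $\lvert s\rvert\lesssim t\lvert\xt{y}\rvert$, in which case the $\lvert\xt{y}\rvert^\theta$ bound covers it, or the segment $[s,s+t\lvert\xt{y}\rvert]$ stays at distance $\gtrsim\lvert s\rvert$ from the origin; and for the continuity statement the translation-continuity argument should be applied to $\widehat{W}(t_0,\cdot)$ for some $0<t_0<t$ rather than to $\widehat{W}_0$ itself, since the polynomial weights are only integrable after some damping has accumulated -- this is exactly how the paper's final lemma proceeds.
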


In Theorem \ref{regularity}, $H^k(\mathbb{R}^{2d})$ stands for the $k$th Sobolev sapce on $\mathbb{R}^{2d}$. The proof of Theorem \ref{regularity} is given in Section \ref{proofthreg}. This result is an important consequence of the slowly decaying correlation assumption in time \eqref{SDCA} and cannot be observed in the case of rapidly decaying correlations in time \eqref{SDCAcor}. Theorem \ref{thasymptotic} and Theorem \ref{regularity}, leads us to the following corollary.

\begin{cor}\label{thasymptotic2}
The family $(W_\e)_{\e\in(0,1)}$, solution of the transport equation \eqref{wignereq} with $W_0\in L^2(\mathbb{R}^{2d})$, converges in probability on 
$\mathcal{C}([0,+\infty), (\mathcal{B}_{r} ,d_{\mathcal{B}_{r}}))$ as $\e \to 0$ to a limit denoted by $W$, which is the unique classical solution uniformly bounded in $L^2(\mathbb{R}^{2d})$ of the radiative transfer equation
\begin{equation}\label{radtranseq2}
\partial_t W+\xt{k}\cdot \nabla_{\xt{x}} W=\mathcal{L}W,
\end{equation}
with $W(0,\xt{x},\xt{k})=W_0(\xt{x},\xt{k})$, and where $\mathcal{L}$ is defined by \eqref{formgene}.

\end{cor}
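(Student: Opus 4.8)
The plan is to read off everything except the upgrade from \emph{weak} to \emph{classical} from the two preceding theorems, and then spend the real effort on that upgrade. Concretely, Theorem \ref{thasymptotic} already supplies, for $W_0\in L^2(\mathbb{R}^{2d})$ satisfying the standing hypothesis \eqref{weakcvinit}, both the convergence in probability of $(W_\e)_{\e\in(0,1)}$ on $\mathcal{C}([0,+\infty),(\mathcal{B}_r,d_{\mathcal{B}_r}))$ to a limit $W$, and the fact that $W$ is the unique weak solution of \eqref{radtranseq} uniformly bounded in $L^2(\mathbb{R}^{2d})$. The convergence assertion of the corollary is thus obtained verbatim, and only the nature of the limiting object remains to be analyzed.

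Next I would apply Theorem \ref{regularity} to this same $W$: since $W_0\in L^2(\mathbb{R}^{2d})$ and \eqref{SDCAreg} holds, that theorem gives $W\in\mathcal{C}^0\big((0,+\infty),\mathcal{C}^\infty(\mathbb{R}^{2d})\big)$ together with the explicit representation formula. In particular, for every $t>0$ the maps $\xt{x}\mapsto\nabla_\xt{x}W(t,\xt{x},\xt{k})$ and $\xt{k}\mapsto W(t,\xt{x},\xt{k})$ are smooth with locally uniform bounds on all derivatives (by Sobolev embedding of $\bigcap_{k\geq0}H^k$), and differentiating the representation formula under the integral sign in $t$ shows that $\partial_t W$ exists pointwise and is continuous on $(0,+\infty)\times\mathbb{R}^{2d}$. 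The transport term $\xt{k}\cdot\nabla_\xt{x}W$ is therefore continuous as well.

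The step with genuine content is verifying that the nonlocal term is a well-defined continuous function, so that \eqref{radtranseq2} can be read pointwise. I would estimate
\[
\mathcal{L}W(t,\xt{x},\xt{k})=\int d\xt{p}\,\sigma(\xt{p}-\xt{k})\big(W(t,\xt{x},\xt{p})-W(t,\xt{x},\xt{k})\big),
\]
whose kernel $\sigma$ from \eqref{sigmadef} is compactly supported (as $\widehat{R}_0$ is) but non-integrable at the origin under \eqref{SDCA}. Using the uniform gradient bound on $W(t,\xt{x},\cdot)$ one has $|W(t,\xt{x},\xt{p})-W(t,\xt{x},\xt{k})|\leq C|\xt{p}-\xt{k}|$, so the integrand is dominated by $C\,\sigma(\xt{p}-\xt{k})|\xt{p}-\xt{k}|\sim C|\xt{p}-\xt{k}|^{1-d-\theta}$ near $\xt{p}=\xt{k}$ by \eqref{SDCAreg}; in polar coordinates this is $\mathcal{O}(|\xt{p}-\xt{k}|^{-\theta})$ against the radial measure, hence absolutely integrable precisely because $\theta\in(0,1)$ (the symmetry $\sigma(\xt{q})=\sigma(-\xt{q})$ together with $\mathcal{C}^2$ smoothness even improves the local rate to $|\xt{p}-\xt{k}|^{2-d-\theta}$, but this is not needed). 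Thus $\mathcal{L}W(t,\cdot,\cdot)$ is an absolutely convergent integral, and dominated convergence with the locally uniform bounds from Theorem \ref{regularity} yields its continuity. With all three terms continuous, I would then pass from the weak formulation of Theorem \ref{thasymptotic} to the pointwise identity: testing against $\lambda\in\mathcal{C}^\infty_0(\mathbb{R}^{2d})$, integrating by parts in $\xt{x}$ to move $\xt{k}\cdot\nabla_\xt{x}$ onto $W$, and using the $L^2$-symmetry of $\mathcal{L}$ (inherited from $\sigma(\xt{q})=\sigma(-\xt{q})$) to move $\mathcal{L}$ onto $W$, the relation becomes $\langle\partial_t W+\xt{k}\cdot\nabla_\xt{x}W-\mathcal{L}W,\lambda\rangle=0$ for all such $\lambda$ and all $t>0$; since the bracketed quantity is continuous, it vanishes identically, which is exactly \eqref{radtranseq2} in the classical sense, with $W(0)=W_0$ retained from Theorem \ref{thasymptotic}.

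Finally, uniqueness of the classical solution is immediate: any classical solution uniformly bounded in $L^2(\mathbb{R}^{2d})$ is in particular a weak solution uniformly bounded in $L^2(\mathbb{R}^{2d})$, and such a solution is unique by Theorem \ref{thasymptotic}; hence $W$ is the unique classical solution. I expect the main obstacle to lie in the third step, namely rigorously establishing that $\mathcal{L}W$ is an absolutely convergent integral defining a continuous function despite $\int d\xt{p}\,\sigma(\xt{p})=+\infty$, and in justifying the transfer of $\mathcal{L}$ onto $W$ in the weak formulation with no surviving principal-value or boundary contribution.
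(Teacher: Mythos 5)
Your proposal is correct and follows the same route the paper intends: the paper presents Corollary \ref{thasymptotic2} as an immediate consequence of combining Theorem \ref{thasymptotic} (convergence and uniqueness of the weak solution) with Theorem \ref{regularity} (smoothness), and your argument does exactly that, merely supplying the omitted details of the weak-to-classical upgrade. Your verification that $\mathcal{L}W$ is an absolutely convergent integral via the Lipschitz bound in $\xt{k}$ and the local behavior $\sigma(\xt{q})\sim\sigma|\xt{q}|^{-d-\theta}$ with $\theta\in(0,1)$ is sound and consistent with the paper's framework.
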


Thanks to the long-range decorrelation assumption in time \eqref{SDCAreg}, the limit obtained in Theorem \ref{thasymptotic} in not only a weak solution of the radiative transfer equation \eqref{radtranseq} but also a classical solution. In Section \ref{probrepsec}, we give an explanation of this regularizing effect using a probabilistic representation of the solution of \eqref{radtranseq} in term of a Lévy process with jump measure $\sigma(\xt{p})d\xt{p}$.

\section{Probabilistic Representation}\label{probrepsec}

In this section we discuss the probabilistic representation of the solution of the radiative transfer equation \eqref{radtranseq} in term of Lévy process. A Lévy process is a stochastic process with independent and stationary increments. We refer to \cite{applebaum, sato} for the basic properties of the Lévy processes. A particular property of these processes is that they are entirely characterized by their characteristic exponent $\Psi$ defined by $\mathbb{E}[e^{i\xt{q}\cdot L_t}]=e^{-t\Psi(\xt{q})}$. For instance, there exists a Lévy process associated to the generator \eqref{formgene} and for which its characteristic exponent is given by $\Psi(\xt{q})=\int d\xt{p}\sigma(\xt{p})(e^{i\xt{p}\cdot\xt{q}}-1)$.  

\begin{prop}\label{probrep}
Let $W$ be the unique weak solution of the radiative transport equation \eqref{radtranseq} with initial datum $W_0\in L^2(\mathbb{R}^{2d})$. Then, there exists a Lévy process $(L_t)_{t\geq 0}$ with caracteristic exponent 
\eqref{caracexp} such that $L_0=0$, and
\[W(t, \xt{x},\xt{k})=\E\Big[W_0\Big(\xt{x}-t\xt{k}-\int_0^t L_sds,\xt{k}+L_t\Big)\Big].\]
According to Theorem \ref{regularity}, $W$ is the unique classical solution  uniformly bounded in $L^2(\mathbb{R}^{2d})$ of \eqref{radtranseq2}.
\end{prop}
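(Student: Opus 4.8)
The plan is to read off from (\ref{formgene}) that $\mathcal{L}$ is the generator of a pure–jump L\'evy process in the momentum variable, to recognize the whole right–hand side of (\ref{radtranseq}) as the generator of a degenerate Markov process $Z_t=(X_t,K_t)$ on phase space, and then to identify the representation $W(t)=\E[W_0(Z_t)]$ with the weak solution by the uniqueness already established in Theorem \ref{thasymptotic}. First I would construct the L\'evy process. After the change of variables $\xt{p}\mapsto\xt{p}-\xt{k}$, which is legitimate because $\sigma$ is even by (\ref{sigmadef}) together with the symmetry of $\widehat{R}_0$ and $\mathfrak{g}$, one rewrites $\mathcal{L}\varphi(\xt{k})=\int d\xt{p}\,\sigma(\xt{p})\big(\varphi(\xt{k}+\xt{p})-\varphi(\xt{k})\big)$, which is the generator of a L\'evy process with L\'evy measure $\sigma(\xt{p})d\xt{p}$. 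I would check that this is a genuine L\'evy measure: near the origin (\ref{SDCAreg}) gives $\sigma(\xt{p})\sim\sigma|\xt{p}|^{-(d+\theta)}$ with $\theta\in(0,1)$, so $\int_{|\xt{p}|\le1}|\xt{p}|^2\sigma(\xt{p})d\xt{p}\lesssim\int_0^1\rho^{1-\theta}d\rho<\infty$, while away from the origin $\sigma$ is supported in the compact support of $\widehat{R}_0$. In fact (\ref{longrange}) yields the stronger bound $\int\min(1,|\xt{p}|)\sigma(\xt{p})d\xt{p}<\infty$, so the process has finite variation: $L_t=\sum_{s\le t}\Delta L_s$ requires no compensating drift and its characteristic exponent is exactly (\ref{caracexp}). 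Existence is then standard (L\'evy--Khintchine; cf. \cite{sato,applebaum}).

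Next I would assemble the phase–space process $Z_t=(X_t,K_t)$ with $K_t=\xt{k}+L_t$ and $X_t=\xt{x}-\int_0^t K_s\,ds=\xt{x}-t\xt{k}-\int_0^t L_s\,ds$, so that the claimed formula reads $W(t,\xt{x},\xt{k})=\E[W_0(X_t,K_t)]$. By It\^o's / Dynkin's formula for the finite–variation jump process $L$, the process $Z$ is Markov with generator $\mathcal{G}f=-\xt{k}\cdot\nabla_\xt{x} f+\mathcal{L}f$, which is precisely $\partial_t$ subtracted from the right–hand side of (\ref{radtranseq}). Two structural facts make $P_tW_0:=\E[W_0(Z_t)]$ act well on $L^2$: for each realization of $L$ the map $(\xt{x},\xt{k})\mapsto(X_t,K_t)$ is the shear $\big(\xt{x}-t\xt{k}-\int_0^t L_s\,ds,\ \xt{k}+L_t\big)$, whose Jacobian is unipotent with determinant $1$; hence $\|W_0(Z_t)\|_{L^2(d\xt{x}d\xt{k})}=\|W_0\|_{L^2}$ pathwise, and Minkowski's inequality gives $\|P_tW_0\|_{L^2}\le\|W_0\|_{L^2}$. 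This yields at once the uniform $L^2$ bound demanded in the statement.

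Finally I would verify that $u(t):=P_tW_0$ is the weak solution. For $W_0$ in a smooth class (say Schwartz), Dynkin's formula gives $\partial_t\E[W_0(Z_t)]=\E[\mathcal{G}W_0(Z_t)]$, and testing against $\lambda\in\mathcal{C}^\infty_0(\mathbb{R}^{2d})$ — using that $\mathcal{L}$ is symmetric because $\sigma$ is even, and that $-\xt{k}\cdot\nabla_\xt{x}$ has formal adjoint $+\xt{k}\cdot\nabla_\xt{x}$ — reproduces the weak formulation
\[\big<u(t),\lambda\big>_{L^2}-\big<W_0,\lambda\big>_{L^2}=\int_0^t\big<u(s),\xt{k}\cdot\nabla_\xt{x}\lambda+\mathcal{L}\lambda\big>_{L^2}\,ds.\]
To reach general $W_0\in L^2(\mathbb{R}^{2d})$ I would approximate $W_0$ by Schwartz functions in $L^2$; the $L^2$–contraction above transfers the convergence to $u^{(n)}(s)\to u(s)$ in $L^2$, while the test symbol $\xt{k}\cdot\nabla_\xt{x}\lambda+\mathcal{L}\lambda$ is itself a bounded, compactly supported $L^2$ function — the singularity of $\sigma$ is absorbed by $\lambda(\xt{p})-\lambda(\xt{k})=O(|\xt{p}-\xt{k}|)$ together with $\int|\xt{p}|\sigma(\xt{p})d\xt{p}<\infty$, and $\mathcal{L}\lambda(\xt{k})$ vanishes for $|\xt{k}|$ large since $\widehat{R}_0$ and $\lambda$ are both compactly supported. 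Hence every term passes to the limit and $u$ is a weak, $L^2$–bounded solution of (\ref{radtranseq}); by the uniqueness asserted in Theorem \ref{thasymptotic} it coincides with $W$, and Theorem \ref{regularity} upgrades it to the unique classical solution of (\ref{radtranseq2}).

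The main obstacle is that $W_0$ is only $L^2$, so Dynkin's formula cannot be applied to it directly; the volume–preserving shear structure is exactly what resolves this, simultaneously providing the uniform bound and licensing the density argument. A secondary point requiring care is that $Z$ is a \emph{degenerate} Markov process (smooth transport in $\xt{x}$ coupled to jumps in $\xt{k}$), so one cannot invoke the Feller theory for genuine L\'evy processes off the shelf but must verify the joint generator and Dynkin's identity by hand; this is, however, routine once the finite–variation structure of $L$ is in place.
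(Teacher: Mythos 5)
Your proposal is correct and follows essentially the same route as the paper: construct the pure-jump L\'evy process with jump measure $\sigma(\xt{p})d\xt{p}$, form the degenerate phase-space Markov process $\big(\xt{x}-t\xt{k}-\int_0^t L_s\,ds,\,\xt{k}+L_t\big)$ whose generator is $-\xt{k}\cdot\nabla_{\xt{x}}+\mathcal{L}$, obtain the solution property for smooth data, and extend to $W_0\in L^2$ by the $L^2$-contraction (your pathwise volume-preserving shear plus Minkowski is the same computation the paper performs with the transition kernel and Jensen's inequality) together with the weak uniqueness from Theorem \ref{thasymptotic}. The only cosmetic difference is that you verify the L\'evy-measure integrability and the joint generator by hand where the paper cites \cite[Lemma 31.7]{sato}.
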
 

The proof of Proposition \ref{probrep} is given in Section \ref{proofproprep}. According to \eqref{caracexp}, the Lévy process $(L_t)_t$ is a pure jump process with jump measure $\sigma(\xt{p})d\xt{p}$. Therefore, because of the long-range correlation property \eqref{SDCAreg} the jump process $(L_t)_t$ has infinitely many small jumps. This last property of the symmetric process $(L_t)_t$ is the key to show the regularizing effect of the radiative transfer equation \eqref{radtranseq}. In fact, according to \cite[Proposition 1.1]{picard}  $L_t$ has a smooth bounded density, which permits to obtain smoothness to the variable $\xt{k}$. Moreover, the transport term in \eqref{radtranseq} permits to transfer the smoothness of the variable $\xt{k}$ to the variable $\xt{x}$. The probabilistic representation of the solution of \eqref{radtranseq} presented in  Proposition \ref{probrep} will be usefull in the proof of Theorem \ref{fractional}.

\section{Fractional Radiative Transfer Equation}\label{fractionalsec}

The goal of this section is to give an approximation of the radiative transfer equation \eqref{radtranseq}. This approximation permits to get a simpler radiative transfer model and derive an explicit formula of the solution, from which one can easily extract the exponent of the decaying power law.

The nonlocal effect of $\mathcal{L}$, defined by \eqref{formgene}, is characterized by the support of the transfer coefficient $\sigma(\xt{p})$ defined by \eqref{sigmadef}. The extension of this support is tantamount to the extension of the nonlocal effects of the transfer operator $\mathcal{L}$. In this section, we are interested in what does the radiative transfer equation \eqref{radtranseq} look like if the support of the transfer coefficient increase? 

Let us recall that the transfer coefficient is given by
\[\sigma(\xt{p})=\frac{\widehat{R}_0(\xt{p})}{\mathfrak{g}(\xt{p})}=\widehat{R}(0,\xt{p})=\int \E [V(t+s,\xt{x}+\xt{y})V(s,\xt{y})]e^{-i\xt{p}\cdot \xt{x}}d\xt{x}dt,\]
which is the power spectrum of the two-point correlation of the random potential $V$ at frequency $0$. In order to study the impact of the extension of the support of $\sigma(\xt{p})$, let us consider the power spectrum given by
\begin{equation}\label{scaling}\sigma^\eta(\xt{p})=\frac{1}{\eta^{1-\theta}}\int \E \Big[V\Big(\frac{t+s}{\eta},\frac{\xt{x}+\xt{y}}{\eta}\Big)V\Big(\frac{s}{\eta},\frac{\xt{y}}{\eta}\Big)\Big]e^{-i\xt{p}\cdot \xt{x}}d\xt{x}dt=\eta^{d+\theta}\sigma(\eta\xt{p}).\end{equation}
Here, the parameter $\eta$ represents the extension of the support of $\sigma^\eta(\xt{p})$, and therefore the extension of the nonlocal effects of the corresponding transfer operator $\mathcal{L}^\eta$ as $\eta$ goes to $0$. Let us remark that this scaling corresponds to the long space and time diffusion approximation of the radiative transfer equation \eqref{radtranseq}.

In this paper, we have assumed \eqref{SDCAreg}, which means that that the transfer coefficient behaves locally at $0$ like the transfer coefficient of the fractional Laplacian $-(-\Delta)^{\theta/2}$. Thanks to the scaling \eqref{scaling} this local behavior becomes global as shown in Theorem \ref{fractional}. In fact, using \eqref{SDCAreg} we have
\begin{equation}\label{globalfrac}
\forall \xt{p}\in \mathbb{R}^d\setminus\{0\}, \quad \lim_{\eta\to 0} \sigma^\eta(\xt{p})=\frac{\sigma}{\lvert \xt{p}\rvert^{d+\theta}}.
\end{equation}
Moreover, let us assuming that $\widehat{R}_0$ is bounded on each compact of $\mathbb{R}^d\setminus\{0\}$, we also have
\begin{equation}\label{boundedfrac}
0\leq \sigma^\eta(\xt{p})\leq \frac{C}{\lvert\xt{p} \rvert^{d+\theta}}\quad \forall \xt{p}\in \mathbb{R}^{d}\setminus \{0\},
\end{equation}
with $C>0$.
\begin{thm}\label{fractional}
Let us assume that  $\widehat{R}_0$ is bounded on each compact of $\mathbb{R}^d\setminus\{0\}$. Then, we have
\[\forall W_0\in L^2(\mathbb{R}^{2d})\text{ and }\forall t\geq 0, \quad \lim_{\eta\to 0} W^\eta(t,\xt{x},\xt{k})=W^\infty(t,\xt{x},\xt{k})\]
pointwise and weakly in $L^2(\mathbb{R}^{2d})$. Here, $W^\eta$ is the unique classical solution uniformly bounded in $L^2(\mathbb{R}^{2d})$ of the radiative transfer equation \eqref{radtranseq}, and $W^\infty$ is the unique weak solution uniformly bounded in $L^2(\mathbb{R}^{2d})$ of the following radiative transfer equation
\begin{equation}\label{asympradeq}\partial_t W^\infty+\xt{k}\cdot\nabla_{\xt{x}}W^\infty=-\sigma(\theta)(-\Delta_\xt{k})^{\theta/2}W^{\infty}, \quad W^\infty(0,\xt{x},\xt{k})=W_0(\xt{x},\xt{k}).\end{equation}
Here, $(-\Delta_\xt{k})^{\theta/2}$ is the fractional Laplacian with Hurst index $\theta\in(0,1)$, and
\[\sigma(\theta)=\frac{\sigma \theta\Gamma(1-\theta)}{(2\pi^d)}\int_{\mathbb{S}^{d-1}}dS(\xt{u})\lvert\xt{e}_1\cdot \xt{u} \rvert^\theta\]
with $\xt{e}_1\in \mathbb{S}^{d-1}$ and $\Gamma(z)=\int_0^{+\infty} t^{1-z}e^{-t}dt$.
Moreover, $W^\infty$ is given by the following formula
\begin{equation}\label{formula}W^\infty(t,\xt{x},\xt{k})=\frac{1}{(2\pi)^d}\int d\xt{y}d\xt{q} e^{i(\xt{x}\cdot\xt{y}+\xt{k}\cdot\xt{q})}e^{-\sigma(\theta)\int_0^t du \lvert \xt{q}+u\xt{y} \rvert^\theta}\widehat{W}_0(\xt{y},\xt{q}+t\xt{y}).\end{equation}
\end{thm}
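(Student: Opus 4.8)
The plan is to push the explicit Fourier representation of Theorem \ref{regularity} through the scaling limit $\eta\to0$. For each $\eta>0$ write $\Psi^\eta$ for the characteristic exponent associated with the rescaled transfer coefficient $\sigma^\eta$ of \eqref{scaling}, namely
\[\Psi^\eta(\xt{q})=\int d\xt{p}\,\sigma^\eta(\xt{p})\big(e^{i\xt{p}\cdot\xt{q}}-1\big),\]
which is well defined because the cancellation in $e^{i\xt{p}\cdot\xt{q}}-1$ compensates the singularity of $\sigma^\eta$ at the origin. Since \eqref{boundedfrac} makes $\sigma^\eta(\xt{p})\,d\xt{p}$ an admissible symmetric jump measure, Theorem \ref{regularity} applies to $W^\eta$ and yields
\[W^\eta(t,\xt{x},\xt{k})=\frac{1}{(2\pi)^{2d}}\int d\xt{y}\,d\xt{q}\; e^{i(\xt{x}\cdot\xt{y}+\xt{k}\cdot\xt{q})}\,m^\eta(t,\xt{y},\xt{q})\,\widehat{W}_0(\xt{y},\xt{q}+t\xt{y}),\]
where $m^\eta(t,\xt{y},\xt{q})=\exp\big(\int_0^t\Psi^\eta(\xt{q}+u\xt{y})\,du\big)$. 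The whole theorem then reduces to controlling the multiplier $m^\eta$ as $\eta\to0$ and identifying its limit with $m^\infty(t,\xt{y},\xt{q})=\exp\big(-\sigma(\theta)\int_0^t|\xt{q}+u\xt{y}|^\theta\,du\big)$.

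The heart of the matter is the pointwise convergence $\Psi^\eta(\xt{q})\to-\sigma(\theta)|\xt{q}|^\theta$. By symmetry of $\sigma^\eta$ one may replace $e^{i\xt{p}\cdot\xt{q}}-1$ by $\cos(\xt{p}\cdot\xt{q})-1\le0$, so that $\mathrm{Re}\,\Psi^\eta\le0$. The elementary inequality $|e^{i\xt{p}\cdot\xt{q}}-1|\le\min(|\xt{p}|\,|\xt{q}|,2)$ together with the uniform bound \eqref{boundedfrac} produces the $\eta$-independent majorant $C\,|\xt{p}|^{-(d+\theta)}\min(|\xt{p}|\,|\xt{q}|,2)$, integrable on $\mathbb{R}^d$: near $\xt{p}=0$ the cancellation leaves the integrand $\lesssim|\xt{p}|^{1-d-\theta}$, which is integrable precisely because $\theta<1$ (in polar coordinates $\int_0 r^{-\theta}\,dr<+\infty$), while at infinity $|\xt{p}|^{-(d+\theta)}$ is integrable because $\theta>0$. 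The pointwise limit \eqref{globalfrac} and dominated convergence then give $\Psi^\eta(\xt{q})\to\sigma\int d\xt{p}\,|\xt{p}|^{-(d+\theta)}(e^{i\xt{p}\cdot\xt{q}}-1)$, and passing to polar coordinates $\xt{p}=r\xt{u}$ reduces the radial integral to a constant multiple of $|\xt{u}\cdot\xt{q}|^\theta$ while rotational invariance turns the angular part into $\int_{\mathbb{S}^{d-1}}|\xt{e}_1\cdot\xt{u}|^\theta\,dS(\xt{u})$, identifying the limit with $-\sigma(\theta)|\xt{q}|^\theta$ for the constant $\sigma(\theta)$ of the statement. The same majorant furnishes the uniform bound $|\Psi^\eta(\xt{r})|\le C|\xt{r}|^\theta$, whence $u\mapsto\Psi^\eta(\xt{q}+u\xt{y})$ is dominated on $[0,t]$ by $C(|\xt{q}|+t|\xt{y}|)^\theta$; a second dominated-convergence argument in $u$ gives $\int_0^t\Psi^\eta(\xt{q}+u\xt{y})\,du\to-\sigma(\theta)\int_0^t|\xt{q}+u\xt{y}|^\theta\,du$, hence $m^\eta\to m^\infty$ pointwise with $|m^\eta|\le1$ and $|m^\infty|\le1$ throughout.

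With the multiplier convergence in hand I would conclude in both senses claimed. For the weak $L^2$ convergence, Plancherel rewrites $\langle W^\eta(t),\lambda\rangle$ as
\[\frac{1}{(2\pi)^{2d}}\int d\xt{y}\,d\xt{q}\;m^\eta(t,\xt{y},\xt{q})\,\widehat{W}_0(\xt{y},\xt{q}+t\xt{y})\,\overline{\widehat{\lambda}(\xt{y},\xt{q})};\]
because the translation $\xt{q}\mapsto\xt{q}+t\xt{y}$ is measure preserving, Cauchy-Schwarz bounds the integrand by the fixed $L^1$ function $|\widehat{W}_0(\xt{y},\xt{q}+t\xt{y})|\,|\widehat{\lambda}(\xt{y},\xt{q})|$, and $|m^\eta|\le1$ with $m^\eta\to m^\infty$ gives $\langle W^\eta(t),\lambda\rangle\to\langle W^\infty(t),\lambda\rangle$, where $W^\infty$ is exactly the function \eqref{formula}. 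The pointwise convergence is immediate from dominated convergence in the representation integral when $\widehat{W}_0\in L^1$; the general $W_0\in L^2$ case is recovered by approximation, using the uniform contraction $\|W^\eta(t)\|_{L^2}\le\|W_0\|_{L^2}$ (a consequence of $|m^\eta|\le1$ and Plancherel) together with the instantaneous smoothing of Theorem \ref{regularity}, which through the uniform-in-$\eta$ lower bound $-\mathrm{Re}\,\Psi^\eta(\xt{r})\gtrsim|\xt{r}|^\theta$ near the origin makes the evaluation functionals $L^2$-continuous uniformly in $\eta$. Finally, that \eqref{formula} is the unique weak solution of \eqref{asympradeq} follows by differentiating under the integral with the symbol identity $\widehat{(-\Delta_\xt{k})^{\theta/2}f}(\xt{q})=|\xt{q}|^\theta\widehat{f}(\xt{q})$ (the shift $\xt{q}+t\xt{y}$ encoding the transport $\xt{k}\cdot\nabla_\xt{x}$ along characteristics), uniqueness following from a standard $L^2$ energy estimate based on the dissipativity of $-\sigma(\theta)(-\Delta_\xt{k})^{\theta/2}-\xt{k}\cdot\nabla_\xt{x}$.

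The main obstacle is the convergence $\Psi^\eta\to\Psi^\infty$, and in particular the handling of the nonintegrable singularity of $\sigma^\eta$ at the origin: one cannot dominate the integrand by $\sigma^\eta$ alone, and it is essential to exploit the cancellation carried by $e^{i\xt{p}\cdot\xt{q}}-1$. This is exactly the mechanism that confines the admissible exponents to $\theta\in(0,1)$ and that produces, in the limiting equation, the fractional Laplacian of order $\theta$. Everything else is dominated convergence and the routine verification that \eqref{formula} solves \eqref{asympradeq}.
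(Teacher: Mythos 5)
Your argument is correct, but it follows a genuinely different route from the paper. The paper proves Theorem \ref{fractional} probabilistically: it writes $W^\eta$ through the representation of Proposition \ref{probrep} with a L\'evy process $L^\eta$ of jump measure $\sigma^\eta(\xt{p})d\xt{p}$, establishes tightness of the pair $\big(\int_0^t L^\eta_s ds, L^\eta_t\big)$ on $\mathcal{D}([0,+\infty),\mathbb{R}^{2d})$ via the Aldous criterion (splitting $L^\eta$ into a compensated small-jump martingale and a compound Poisson part), identifies every accumulation point through the weak formulation and the $L^2$ convergence $\mathcal{L}^\eta\lambda\to\mathcal{L}^\infty\lambda$, and concludes by a uniqueness lemma based on the L\'evy--Khintchine identity. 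You instead work entirely on the Fourier side: you take the explicit multiplier representation of Lemma \ref{indsol} for each $\sigma^\eta$ and reduce everything to the pointwise convergence $\Psi^\eta(\xt{q})\to-\sigma(\theta)\lvert\xt{q}\rvert^\theta$ under the dominations supplied by \eqref{globalfrac} and \eqref{boundedfrac}. This is more elementary and more quantitative: the single limit $m^\eta\to m^\infty$ with $\lvert m^\eta\rvert\le 1$ delivers the weak convergence, the pointwise convergence, the formula \eqref{formula}, and the identification of the limiting generator all at once, whereas the paper's tightness computation is considerably heavier. What the paper's route buys in exchange is independence from the explicit solution formula (only the weak formulation and generator convergence are used), and a direct link to the probabilistic picture developed in Section \ref{probrepsec}.

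Two steps in your sketch deserve to be written out. First, the pointwise convergence for general $W_0\in L^2$ rests on the evaluation bound $\lvert W^\eta(t,\xt{x},\xt{k})\rvert\le \|m^\eta(t,\cdot,\cdot)\|_{L^2}\|W_0\|_{L^2}$ being uniform in $\eta$, hence on a lower bound $-\mathrm{Re}\,\Psi^\eta(\xt{r})\ge c\lvert\xt{r}\rvert^\theta$ for $\lvert\xt{r}\rvert$ large, uniformly in small $\eta$; this does follow from \eqref{SDCAreg}, since the two-sided asymptotic of $\sigma$ at the origin yields $\sigma^\eta(\xt{p})\ge c\lvert\xt{p}\rvert^{-(d+\theta)}$ on any fixed ball once $\eta$ is small, and then the argument of Lemma \ref{lemkey} applies uniformly --- but you only gesture at it. Second, for the uniqueness of weak solutions of \eqref{asympradeq} an ``$L^2$ energy estimate'' is not quite available, because a weak solution in the sense of the paper cannot be paired with itself; the clean argument is the duality one of Lemma \ref{uniqW}, testing against the explicit smooth backward solution built from the multiplier $e^{-\sigma(\theta)\int\lvert\xt{q}+u\xt{y}\rvert^\theta du}$, which is entirely in the spirit of your computation. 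Neither point is a genuine gap; both are fillable with the estimates you already have on hand.
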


The proof of Theorem \ref{fractional} is given in Section \ref{proofthfrac} and uses the probabilistic representation obtained in Proposition \ref{probrep}. 

Equation \eqref{asympradeq} is an approximation of \eqref{radtranseq} when we extend the nonlocal effect, and it permits to get an explicit formula \eqref{formula} of its solutions. This formula permits to exhibit a damping term, which means that the phase space energy distribution decays with respect to time. Moreover this damping term obeying to a power law with exponent $\theta\in(0,1)$. In wave propagation such a kind of result result has been already obtained for the wave equation in one dimensional propagation medium \cite{garnier}, and physical models involving fractional Laplacian have been proposed to predict such a power law \cite{chen}. 

Regarding the regularity of $W^\infty$ we can prove the following proposition in the same way as Theorem \ref{regularity}. This result states that the unique solution of \eqref{asympradeq} $W^\infty$ is actually a smooth function which is the unique classical solution of \eqref{asympradeq}.

\begin{prop}
Let $W_0\in L^2 (\mathbb{R}^{2d})$ and $W^\infty$ be the unique solution of the radiative transfer equation \eqref{asympradeq}. Then, $\forall t_0>0$, we have 
\[
W^\infty\in \mathcal{C}^0\Big( (0,+\infty), \bigcap_{k\geq 0} H^k (\mathbb{R}^{2d}) \Big)\cap L^\infty \Big([t_0,+\infty),\bigcap _{k\geq 0} H^k (\mathbb{R}^{2d})\Big),
\]
so that we also have 
\[W^\infty\in \mathcal{C}^0\big( (0,+\infty),\mathcal{C}^\infty(\mathbb{R}^{2d})\big)\cap L^\infty \big( [t_0,+\infty),\mathcal{C}^\infty(\mathbb{R}^{2d})\big).\] 
Then, $W^\infty$ is the unique classical solution uniformly bounded in $L^2(\mathbb{R}^{2d})$ of the radiative transfer equation
\[\partial_t W^\infty+\xt{k}\cdot\nabla_{\xt{x}}W^\infty=-\sigma(\theta)(-\Delta)^{\theta/2}W^{\infty}, \quad W^\infty(0,\xt{x},\xt{k})=W_0(\xt{x},\xt{k}).\]
\end{prop}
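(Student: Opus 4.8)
The plan is to follow the proof of Theorem \ref{regularity} almost verbatim, exploiting that $W^\infty$ admits the explicit representation \eqref{formula}, which has exactly the same structure as the formula for $W$ in Theorem \ref{regularity} with the characteristic exponent $\Psi$ of \eqref{caracexp} replaced by the pure fractional symbol $-\sigma(\theta)\lvert\cdot\rvert^\theta$. First I would read off from \eqref{formula} that the Fourier transform of $W^\infty(t,\cdot,\cdot)$ in the phase-space variables $(\xt{x},\xt{k})$, evaluated at the dual variables $(\xt{y},\xt{q})$, equals, up to the normalizing constant,
\[\widehat{W^\infty}(t,\xt{y},\xt{q})=e^{-\sigma(\theta)\int_0^t du\,\lvert \xt{q}+u\xt{y}\rvert^\theta}\,\widehat{W}_0(\xt{y},\xt{q}+t\xt{y}).\]
Since $\widehat{W}_0\in L^2(\mathbb{R}^{2d})$, establishing $W^\infty(t)\in H^k(\mathbb{R}^{2d})$ reduces to controlling the supremum over $(\xt{y},\xt{q})$ of the product of the polynomial Sobolev weight $(1+\lvert\xt{y}\rvert^2+\lvert\xt{q}\rvert^2)^{k}$ with the squared exponential damping factor.

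The core of the argument, and the step I expect to be the main obstacle, is a lower bound for the phase integral of the form
\[\int_0^t du\,\lvert \xt{q}+u\xt{y}\rvert^\theta \geq c(t)\big(\lvert\xt{y}\rvert+\lvert\xt{q}\rvert\big)^\theta,\]
valid with $c(t)>0$ uniformly for $t$ in compact subsets of $(0,+\infty)$. This is precisely the estimate that makes the exponential decay in both frequency variables and hence dominate any polynomial weight, yielding $\sup_{(\xt{y},\xt{q})}(1+\lvert\xt{y}\rvert^2+\lvert\xt{q}\rvert^2)^k e^{-2\sigma(\theta)\int_0^t du\,\lvert\xt{q}+u\xt{y}\rvert^\theta}<\infty$ for every $k\geq 0$ and every $t>0$. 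The subtlety is geometric: along the segment $\{\xt{q}+u\xt{y}:u\in[0,t]\}$ the integrand can be small only on a short subinterval near the point of closest approach to the origin, while elsewhere it grows linearly. The decay in $\xt{q}$ alone, which is the smoothing in the velocity variable $\xt{k}$, follows from the fractional symbol directly; transferring it to decay in $\xt{y}$, the smoothing in the spatial variable $\xt{x}$, relies on the transport term $u\xt{y}$ inside the integrand. This is the same mechanism by which transport transfers regularity from $\xt{k}$ to $\xt{x}$ that is exploited through the probabilistic representation of Proposition \ref{probrep}. Because $\theta\in(0,1)$ is a sublinear power, care is needed to verify the segment does not linger too long near the origin; a splitting according to whether $\lvert\xt{y}\rvert$ or $\lvert\xt{q}\rvert$ dominates, exactly as in Theorem \ref{regularity}, yields the claimed bound.

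With this estimate in hand the remaining steps are routine. The uniform bound for $t\geq t_0>0$ gives membership in $L^\infty\big([t_0,+\infty),\bigcap_{k\geq 0} H^k(\mathbb{R}^{2d})\big)$, while dominated convergence, justified by the same exponential bound on compact $t$-intervals away from $0$, yields the continuity $W^\infty\in\mathcal{C}^0\big((0,+\infty),\bigcap_{k\geq 0} H^k(\mathbb{R}^{2d})\big)$. The Sobolev embedding theorem on $\mathbb{R}^{2d}$ then upgrades these to the stated $\mathcal{C}^\infty$-valued statements. Finally, once $W^\infty(t,\cdot,\cdot)$ is known to be smooth with all derivatives in $L^2$ for $t>0$, I would undo the weak formulation of \eqref{asympradeq} by integrating by parts against test functions to conclude that $W^\infty$ satisfies the equation classically; uniqueness in $L^2(\mathbb{R}^{2d})$ is already provided by Theorem \ref{fractional}, so $W^\infty$ is the unique classical, uniformly $L^2$-bounded solution.
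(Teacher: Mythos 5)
Your proposal is correct and follows essentially the same route as the paper, which proves this proposition exactly as Theorem \ref{regularity}: the Fourier-side formula \eqref{formula}, a lower bound on $\int_0^t\lvert\xt{q}+u\xt{y}\rvert^\theta\,du$ playing the role of Lemma \ref{lemkey}, the split according to whether $\lvert\xt{q}\rvert$ or $\lvert\xt{y}\rvert$ dominates, and then the Lemma \ref{lemreg2}/\ref{lemreg3} arguments. Your key estimate $\int_0^t\lvert\xt{q}+u\xt{y}\rvert^\theta\,du\geq c(t)(\lvert\xt{y}\rvert+\lvert\xt{q}\rvert)^\theta$ is in fact cleaner than the paper's version (no restriction on $t$ or on the size of the frequencies is needed, since the symbol is exactly $-\sigma(\theta)\lvert\cdot\rvert^\theta$), and it does hold by the segment-geometry argument you sketch.
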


\section{Proof of Theorem \ref{thasymptotic}}\label{proofthlimit}

The proof of Theorem \ref{thasymptotic} is based on the perturbed-test-function approach. Using the notion of a pseudogenerator, we prove tightness and characterize all subsequence limits.

Using a particular tightness criteria, we prove the tightness of the family $(W_\e)_{\e \in (0,1)}$ on the polish space $\mathcal{C}([0,+\infty),(\mathcal{B}_{r},d_{\mathcal{B}_{r}}))$. In the next section, we characterize all subsequence limits as weak solutions of a well-posed radiative transfer equation. 

We have the following version of the Arzelà-Ascoli theorem \cite{billingsley,kallianpur} for processes with values in a complete separable metric space. 
\begin{thm}
A set $B\subset \mathcal{C}([0,+\infty),(\mathcal{B}_{r},d_{\mathcal{B}_{r}} ))$ has a compact closure if and only if 
\[\forall T>0, \quad \lim_{\eta \to 0}\sup_{g \in A} m_T (g, \eta)=0,  \]
with 
\[m_T (g,\eta)=\sup_{\substack{(s,t)\in [0,T]^2\\ \lvert t-s \rvert \leq\eta}} d_{\mathcal{B}_{r}}( g(s) ,g(t)). \] 
\end{thm}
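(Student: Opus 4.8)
The plan is to recognize this as the Arzel\`a--Ascoli characterization of relative compactness in $\mathcal{C}([0,+\infty),(\mathcal{B}_r,d_{\mathcal{B}_r}))$ equipped with the topology of uniform convergence on compact time-intervals, specialized to the crucial feature (established earlier in the excerpt) that the target $(\mathcal{B}_r,d_{\mathcal{B}_r})$ is itself \emph{compact}. Because the range space is compact, the pointwise relative compactness hypothesis of the general Arzel\`a--Ascoli theorem is automatic, and the modulus condition $\lim_{\eta\to0}\sup_{g\in B}m_T(g,\eta)=0$ is exactly the assertion that $B$ is uniformly equicontinuous on each $[0,T]$. Thus the theorem reduces to: relative compactness $\Longleftrightarrow$ equicontinuity on every compact time-interval. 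I would first metrize the function space by the complete separable metric $d(f,g)=\sum_{N\geq1}2^{-N}\min(1,\sup_{t\in[0,N]}d_{\mathcal{B}_r}(f(t),g(t)))$, so that convergence in $d$ is precisely uniform convergence on each $[0,N]$; completeness and separability are inherited from the compactness of the target.

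For the necessity direction I would argue by contradiction. If the modulus condition fails for some $T$, there exist $\varepsilon_0>0$, $\eta_n\downarrow0$, functions $g_n\in B$, and times $s_n,t_n\in[0,T]$ with $|t_n-s_n|\leq\eta_n$ yet $d_{\mathcal{B}_r}(g_n(s_n),g_n(t_n))>\varepsilon_0$. Relative compactness lets me extract a subsequence $g_{n_k}\to g$ uniformly on $[0,T]$; the limit $g$ is continuous, hence uniformly continuous on the compact interval $[0,T]$. Inserting $g(s_{n_k})$ and $g(t_{n_k})$ and applying the triangle inequality, the two outer terms vanish by uniform convergence on $[0,T]$ and the middle term vanishes by uniform continuity of $g$ together with $|s_{n_k}-t_{n_k}|\to0$, contradicting the lower bound $\varepsilon_0$.

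For the sufficiency direction, assuming the modulus condition on every $[0,T]$, I would show that any sequence $(g_n)\subset B$ admits a convergent subsequence. Fixing a countable dense set $Q\subset[0,+\infty)$ (the nonnegative rationals), and using that $(\mathcal{B}_r,d_{\mathcal{B}_r})$ is compact, hence sequentially compact, a diagonal extraction produces a subsequence along which $g_{n_k}(q)$ converges for every $q\in Q$. I then upgrade this to uniform convergence on each $[0,T]$: for $t\in[0,T]$ I choose $q\in Q$ with $|t-q|$ small and bound $d_{\mathcal{B}_r}(g_{n_k}(t),g_{n_j}(t))$ by two modulus terms, controlled uniformly in the index by equicontinuity, plus $d_{\mathcal{B}_r}(g_{n_k}(q),g_{n_j}(q))$, which is small for large $k,j$. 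This makes the subsequence uniformly Cauchy, hence uniformly convergent to a continuous limit, on each $[0,T]$, i.e. convergent in $d$.

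The main obstacle is the passage from pointwise convergence on the dense set $Q$ to uniform convergence over the \emph{unbounded} time-domain $[0,+\infty)$: equicontinuity can only be invoked interval by interval, so the limits on $[0,1],[0,2],\dots$ must be produced and patched together consistently, which I would organize by a second diagonalization over $T=1,2,3,\dots$ and the weighting $2^{-N}$ in $d$. Beyond this bookkeeping, no genuinely hard analytic input is needed: the argument is the classical Arzel\`a--Ascoli proof adapted to a compact range and an unbounded time domain, so the difficulty is structural rather than a delicate estimate.
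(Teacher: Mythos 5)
Your proof is correct. The paper itself offers no proof of this statement: it is quoted as a known version of the Arzel\`a--Ascoli theorem with references to Billingsley and Kallianpur--Xiong, and your argument is precisely the classical one found there, correctly adapted by observing that the compactness of $(\mathcal{B}_r,d_{\mathcal{B}_r})$ (established just before the theorem in the paper) makes the pointwise relative-compactness hypothesis automatic, so that the modulus condition alone characterizes relative compactness. The only cosmetic point is that the paper's statement writes $\sup_{g\in A}$ where it means $\sup_{g\in B}$, which you have implicitly and correctly repaired.
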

From this result, we obtain the classical tightness criterion. 
\begin{thm}
A family of probability measure $\big(\mathbb{P}^\e\big)_{\e\in(0,1)}$ on $\mathcal{C}([0,+\infty),(\mathcal{B}_{r},d_{\mathcal{B}_{r}} ))$ is tight if and only if
\[\forall T>0, \eta'>0 \quad \lim_{\eta \to 0}\sup_{\e \in (0,1) }\mathbb{P}^\e \big(g\,; \,\,m_T (g, \eta)>\eta'\big)=0.\]
\end{thm}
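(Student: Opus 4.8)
The plan is to deduce this criterion from the preceding Arzel\`a--Ascoli characterization of relatively compact sets, via the classical Prohorov-type argument. Recall that tightness of $(\mathbb{P}^\e)_{\e\in(0,1)}$ means that for every $\delta>0$ there is a set $K\subset \mathcal{C}([0,+\infty),(\mathcal{B}_{r},d_{\mathcal{B}_{r}}))$ with compact closure such that $\mathbb{P}^\e(K)\geq 1-\delta$ for all $\e\in(0,1)$. Since $(\mathcal{B}_r,d_{\mathcal{B}_r})$ is itself a compact metric space, the pointwise values of every path automatically lie in a fixed compact set, so by the previous theorem the compactness of $\overline{K}$ is equivalent to the single equicontinuity condition $\lim_{\eta\to 0}\sup_{g\in K} m_T(g,\eta)=0$ for every $T>0$. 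I would treat the two implications separately.

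For the necessity direction I would fix $T>0$, $\eta'>0$, and $\delta>0$, and choose a set $K$ with compact closure satisfying $\mathbb{P}^\e(K)\geq 1-\delta$ for all $\e$. By the Arzel\`a--Ascoli characterization there is $\eta_0>0$ such that $\sup_{g\in K} m_T(g,\eta)\leq \eta'$ whenever $\eta<\eta_0$; hence for such $\eta$ the event $\{g\,;\,m_T(g,\eta)>\eta'\}$ is contained in the complement $K^c$, so that $\mathbb{P}^\e\big(g\,;\,m_T(g,\eta)>\eta'\big)\leq \mathbb{P}^\e(K^c)\leq \delta$ uniformly in $\e$. Letting first $\eta\to 0$ and then $\delta\to 0$ yields the stated limit.

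For the sufficiency direction I would run a diagonal construction. Given $\delta>0$, I use the hypothesis with $T=n$ and $\eta'=1/m$ to select, for each pair $(n,m)\in(\mathbb{N}^\ast)^2$, a threshold $\eta_{n,m}>0$ with
\[
\sup_{\e\in(0,1)}\mathbb{P}^\e\big(g\,;\,m_n(g,\eta_{n,m})>1/m\big)\leq \frac{\delta}{2^{n+m}}.
\]
Setting $K=\bigcap_{n,m\geq 1}\{g\,;\,m_n(g,\eta_{n,m})\leq 1/m\}$, a union bound gives $\mathbb{P}^\e(K^c)\leq \sum_{n,m\geq 1}\delta/2^{n+m}=\delta$ for every $\e$. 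It then remains to verify that $\overline{K}$ is compact, i.e. that $\lim_{\eta\to 0}\sup_{g\in K} m_T(g,\eta)=0$ for each fixed $T>0$. Given $\eta''>0$ I pick an integer $m$ with $1/m\leq \eta''$ and an integer $n\geq T$; using that $m_T(g,\cdot)\leq m_n(g,\cdot)$ (since $[0,T]^2\subset[0,n]^2$) and that $m_n(g,\eta)$ is nondecreasing in $\eta$, every $g\in K$ and every $\eta\leq \eta_{n,m}$ satisfy $m_T(g,\eta)\leq m_n(g,\eta_{n,m})\leq 1/m\leq \eta''$. This confirms the Arzel\`a--Ascoli condition, so $K$ has compact closure and $(\mathbb{P}^\e)_\e$ is tight.

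The only genuinely delicate point is the sufficiency direction: one must reduce the continuum of parameters $T>0$ to integer values $n$ and exploit the two monotonicities ($m_T\leq m_n$ for $T\leq n$, and $m_T(g,\eta)$ nondecreasing in $\eta$), while arranging the summable weights $\delta/2^{n+m}$ so that the countable intersection $K$ has complement of probability at most $\delta$ \emph{uniformly} in $\e$. Everything else reduces to the compactness of $(\mathcal{B}_r,d_{\mathcal{B}_r})$ and the Arzel\`a--Ascoli theorem stated just above.
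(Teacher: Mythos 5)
Your argument is correct and is exactly the classical derivation the paper has in mind: the paper offers no written proof, simply asserting that the criterion follows ``from this result'' (the Arzel\`a--Ascoli characterization), and your two directions --- containment of the bad event in $K^c$ for necessity, and the diagonal intersection $K=\bigcap_{n,m}\{m_n(g,\eta_{n,m})\leq 1/m\}$ with summable weights for sufficiency --- are the standard way to make that assertion precise. Your observation that pointwise relative compactness is automatic because $(\mathcal{B}_r,d_{\mathcal{B}_r})$ is compact is also consistent with the paper's statement of the Arzel\`a--Ascoli theorem, which for that reason imposes only the equicontinuity condition.
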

From the definition of the metric $d_{\mathcal{B}_{r}}$, the tightness criterion becomes the following.
\begin{thm}\label{crit}
A family of processes $(X^\e)_{\e\in(0,1)}$ is tight on $\mathcal{C}([0,+\infty),(\mathcal{B}_{r},d_{\mathcal{B}_{r}} ))$ if and only if the process $\big(\big<X^\e,\lambda\big>_{L^2(\mathbb{R}^d\times\mathbb{R}^d)}\big)_{\e\in(0,1)}$ is tight on $\mathcal{C}([0,+\infty),\mathbb{R})$ $\forall \lambda \in L^2(\mathbb{R}^{2d})$.
\end{thm}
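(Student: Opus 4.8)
The plan is to verify both implications directly against the modulus-of-continuity criterion furnished by the preceding Arzel\`a--Ascoli theorem. Since the state space $(\mathcal{B}_r,d_{\mathcal{B}_r})$ is a \emph{compact} metric space, that theorem shows tightness of $(X^\e)_\e$ on $\mathcal{C}([0,+\infty),(\mathcal{B}_r,d_{\mathcal{B}_r}))$ is equivalent to the single condition
\[
\forall T>0,\ \eta'>0,\qquad \lim_{\eta\to0}\sup_{\e\in(0,1)}\mathbb{P}^\e\bigl(m_T(X^\e,\eta)>\eta'\bigr)=0,
\]
with no separate compact-containment requirement, because every path automatically takes values in the compact set $\mathcal{B}_r$. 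Likewise, for each fixed $\lambda$ the marginals $\langle X^\e(t),\lambda\rangle$ are bounded by $r^2$, so scalar tightness on $\mathcal{C}([0,+\infty),\mathbb{R})$ reduces to the analogous condition on the ordinary modulus of continuity $w_T(f,\eta)=\sup_{(s,t)\in[0,T]^2,\,|t-s|\le\eta}|f(s)-f(t)|$.

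For the forward implication I would argue by continuity. For fixed $\lambda\in L^2(\mathbb{R}^{2d})$ the evaluation map $\pi_\lambda:\mu\mapsto\langle\mu,\lambda\rangle_{L^2}$ is continuous on $(\mathcal{B}_r,d_{\mathcal{B}_r})$, since $d_{\mathcal{B}_r}$ generates the weak topology on $\mathcal{B}_r$ and $\pi_\lambda$ is weakly continuous by definition. Hence $g\mapsto\pi_\lambda\circ g$ is continuous from $\mathcal{C}([0,+\infty),(\mathcal{B}_r,d_{\mathcal{B}_r}))$ into $\mathcal{C}([0,+\infty),\mathbb{R})$, and by the continuous mapping theorem it carries any compact set witnessing tightness of $(X^\e)_\e$ onto a compact set witnessing tightness of $(\langle X^\e,\lambda\rangle)_\e$. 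Thus the scalar family is tight for every $\lambda$.

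For the reverse implication, which is where the real work lies, I would exploit the explicit summable form of $d_{\mathcal{B}_r}$ to split the metric modulus into a finite part and a uniformly small tail. Using $\|X^\e(s)-X^\e(t)\|\le 2r$ and $\|g_j\|\le r$ one gets, for every $N$,
\[
m_T(X^\e,\eta)=\sup_{\substack{(s,t)\in[0,T]^2\\|t-s|\le\eta}}\sum_{j\ge1}\frac{1}{2^j}\bigl|\langle X^\e(s)-X^\e(t),g_j\rangle\bigr|\ \le\ \sum_{j=1}^{N}\frac{1}{2^j}\,w_T\bigl(\langle X^\e(\cdot),g_j\rangle,\eta\bigr)+2r^2\,2^{-N}.
\]
Given $\eta'>0$ I would first fix $N$ with $2r^2\,2^{-N}<\eta'/2$; then $m_T(X^\e,\eta)>\eta'$ forces $w_T(\langle X^\e(\cdot),g_j\rangle,\eta)>\eta'/2$ for at least one $j\le N$ (otherwise the finite sum is bounded by $\tfrac{\eta'}{2}\sum_{j=1}^N 2^{-j}<\eta'/2$), whence
\[
\mathbb{P}^\e\bigl(m_T(X^\e,\eta)>\eta'\bigr)\ \le\ \sum_{j=1}^{N}\mathbb{P}^\e\bigl(w_T(\langle X^\e(\cdot),g_j\rangle,\eta)>\eta'/2\bigr).
\]
Each summand tends to $0$ uniformly in $\e$ as $\eta\to0$, because $(\langle X^\e,g_j\rangle)_\e$ is tight on $\mathcal{C}([0,+\infty),\mathbb{R})$; summing the finitely many terms gives the required limit and hence tightness of $(X^\e)_\e$.

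The main obstacle is precisely this reverse step: one must convert control of the weak-topology metric modulus of continuity into control of \emph{finitely many} scalar moduli. The two ingredients that make it work are the uniform boundedness in $\mathcal{B}_r$, which lets the tail of the defining series be truncated uniformly in $\e$, $\eta$, and the path, and the compactness of $\mathcal{B}_r$, which removes any compact-containment condition so that only moduli of continuity need to be estimated. The forward direction is routine once the equivalence of $d_{\mathcal{B}_r}$ with the weak topology is invoked.
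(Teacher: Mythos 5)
Your proposal is correct and fleshes out exactly the argument the paper leaves implicit (the paper simply asserts that the criterion follows ``from the definition of the metric $d_{\mathcal{B}_r}$''): the forward direction via continuity of $\mu\mapsto\big<\mu,\lambda\big>$ on the compact $(\mathcal{B}_r,d_{\mathcal{B}_r})$, and the reverse direction by truncating the defining series of $d_{\mathcal{B}_r}$ uniformly (using $\lvert\big<X^\e(s)-X^\e(t),g_j\big>\rvert\le 2r^2$) and applying a union bound over the finitely many scalar moduli of continuity. No gaps.
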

This last theorem looks like the tightness criterion of Mitoma and Fouque \cite{mitoma,fouque}.

For any $\lambda \in L^2(\mathbb{R}^{2d})$, we set $W_{\e,\lambda}(t)=\big<W_{\e}(t),\lambda\big>_{L^2(\mathbb{R}^{2d})}$. According to Theorem \ref{crit}, the family $(W_{\e})_\e$ is tight on $\mathcal{C}([0,+\infty), (\mathcal{B}_{r},d_{\mathcal{B}_{r}} ))$ if and only if the family $(W_{\e,\lambda})_\e$ is tight on $\mathcal{C}([0,+\infty),\mathbb{R} )$,  $\forall \lambda \in L^2(\mathbb{R}^{2d})$. Furthermore, $(W_\e)_{\e}$ is a family of continuous processes. Then, according to \cite[Theorem 13.4]{billingsley}, it is sufficient to prove that, $\forall\lambda \in L^2(\mathbb{R}^{2d})$, $(W_{\e,\lambda})_{\e}$ is tight on $\mathcal{D}([0,+\infty),\mathbb{R} )$, which is the set of cad-lag functions with values in $\mathbb{R}$. Finally, using that the process $W_\e$ is a process with values in $\mathcal{B}_r$ and that the set of all smooth functions with compact support in $\mathbb{R}^{2d}$ is dense in $L^2(\mathbb{R}^{2d})$, it is sufficient to show that $(W_{\e,\lambda})_{\e}$ is tight on $\mathcal{D}([0,+\infty),\mathbb{R} )$ $\forall\lambda \in \mathcal{C}^{\infty}_0(\mathbb{R}^{2d})$.

\subsection{Pseudogenerator}\label{pseudogene}

We recall the techniques developed by Kurtz and Kushner \cite{kushner}. Let $\mathcal{M}^\e $ be the set of all $\mathcal{F}^\e$-measurable functions $f(t)$ for which $\sup_{t\leq T} \mathbb{E}\left[\lvert f(t) \rvert \right] <+\infty $ and where $T>0$ is fixed. Here, $\mathcal{F}^\e _t =\mathcal{F}_{t/\e}$ and $(\mathcal{F}_{t})$ is defined by \eqref{filtration}. The $p-\lim$ and the pseudogenerator are defined as follows. Let $f$ and $f^\delta$ in $\mathcal{M}^\e $ $\forall\delta>0$. We say that $f=p-\lim_\delta f^\delta$ if
\[ \sup_{t, \delta }\mathbb{E}[\lvert f^\delta(t)\rvert]<+\infty\quad \text{and}\quad \lim_{\delta\rightarrow 0}\mathbb{E}[\lvert f^\delta (t) -f(t)\rvert]=0 \quad \forall t.\]
The domain of $\mathcal{A}^\e$ is denoted by $\mathcal{D}\left(\mathcal{A}^\e\right)$. We say that $f\in \mathcal{D}\left(\mathcal{A}^\e\right)$ and $\mathcal{A}^\e f=g$ if $f$ and $g$ are in $\mathcal{D}\left(\mathcal{A}^\e\right)$ and 
\begin{equation*}
p-\lim_{\delta \to 0} \left[ \frac{\mathbb{E}^\e _t [f(t+\delta)]-f(t)}{\delta}-g(t) \right]=0,
\end{equation*}
where $\mathbb{E}^\e _t$ is the conditional expectation given $\mathcal{F}^\e _t$. A useful result about pseudogenerator $\mathcal{A}^\e$ is given by the following theorem.
\begin{thm}\label{martingale}
Let $f\in \mathcal{D}\left(\mathcal{A}^\e\right)$. Then
\begin{equation*}
M_f ^\e (t)=f(t)-f(0)-\int _0 ^t  \mathcal{A}^\e f(u)du
\end{equation*}
is an $\left( \mathcal{F}^\e _t \right)$-martingale.
\end{thm}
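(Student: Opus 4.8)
The plan is to verify directly the three defining properties of a martingale for $M_f^\e$: adaptedness, integrability, and the conditional-expectation identity, with essentially all of the work concentrated in the last one. Adaptedness of the term $f(t)$ holds by hypothesis, since $f\in\mathcal{M}^\e$, and I would argue that $\int_0^t \mathcal{A}^\e f(u)\,du$ is adapted and integrable as follows. Writing $g=\mathcal{A}^\e f$ and $g^\delta(t):=(\mathbb{E}^\e_t[f(t+\delta)]-f(t))/\delta$, the uniform-boundedness clause in the definition of $p-\lim$ gives $\sup_{t,\delta}\E[|g^\delta(t)|]=:C<\infty$; passing to the limit in $\E[|g(u)|]\le \E[|g(u)-g^\delta(u)|]+\E[|g^\delta(u)|]$ then yields $\E[|g(u)|]\le C$ uniformly in $u$, so $\E[\int_0^t|g(u)|\,du]\le Ct<\infty$ and $M_f^\e(t)$ is integrable for each $t\le T$.

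For the martingale identity, the very definition of conditional expectation reduces the problem to showing, for every $0\le s\le t$ and every $A\in\mathcal{F}^\e_s$, that $\E[\1_A(M_f^\e(t)-M_f^\e(s))]=0$; using Fubini on the integral term this is the identity $\E[\1_A(f(t)-f(s))]=\int_s^t \E[\1_A\,g(u)]\,du$. I would introduce the scalar function $\psi(t)=\E[\1_A(f(t)-f(s))]$ for $t\ge s$ and compute its increments \emph{exactly}. Because the filtration is increasing we have $A\in\mathcal{F}^\e_s\subset\mathcal{F}^\e_t$, so taking out what is known and using $\mathbb{E}^\e_t[f(t+\delta)]-f(t)=\delta\,g^\delta(t)$ produces the exact relation $\psi(t+\delta)-\psi(t)=\delta\,\E[\1_A\,g^\delta(t)]$, valid for every $\delta>0$.

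From this single identity the two clauses of the $p-\lim$ definition play complementary roles. The uniform bound gives $|\psi(t+\delta)-\psi(t)|\le C\delta$, so $\psi$ is Lipschitz, hence absolutely continuous, and since $\psi(s)=0$ we get $\psi(t)=\int_s^t\psi'(u)\,du$. The pointwise $L^1$-convergence $g^\delta(t)\to g(t)$ gives $(\psi(t+\delta)-\psi(t))/\delta=\E[\1_A g^\delta(t)]\to\E[\1_A g(t)]=:r(t)$, so the right-hand derivative of $\psi$ exists at every $t$ and equals $r(t)$. For a Lipschitz function the a.e. derivative coincides, wherever it exists, with this everywhere-defined right derivative, so $\psi'=r$ a.e. and $\psi(t)=\int_s^t r(u)\,du$, which is precisely the required identity. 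Collecting the three points proves the theorem.

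I expect the main obstacle to be the interchange of the limit $\delta\to0$ with integration in the time variable: the $p-\lim$ supplies only pointwise-in-$t$ convergence in $L^1$, which is too weak to justify a naive telescoping-plus-Riemann-sum argument over a mesh tied to $\delta$ (the error $\delta\sum_i\E[|g^\delta(t_i)-g(t_i)|]$ need not vanish under pointwise convergence alone). The device that circumvents this is exactly the decoupling used above — extract Lipschitz regularity of $\psi$ from the uniform-integrability clause, and the value of the right derivative from the pointwise-convergence clause — so that the classical fact that an absolutely continuous function is the integral of its derivative closes the argument with no uniform-in-time control. A minor technical point to settle along the way is the joint measurability of $(u,\omega)\mapsto g(u,\omega)$ needed to apply Fubini to $\int_s^t\E[\1_A g(u)]\,du$.
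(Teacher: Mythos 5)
Your proof is correct. There is, however, no in-paper proof to compare it against: the paper states this theorem as a recalled result from the Kurtz--Kushner theory of pseudogenerators and justifies it only by the citation to \cite{kushner}. Your self-contained argument is, in substance, the classical proof of that cited result, and it is organized around exactly the right device: reduce the martingale property to $\E[\1_A(M_f^\e(t)-M_f^\e(s))]=0$ for $A\in\mathcal{F}^\e_s$, derive the exact increment identity $\psi(t+\delta)-\psi(t)=\delta\,\E[\1_A\,g^\delta(t)]$ from the tower property, and then split the two clauses of the $p-\lim$ definition so that the uniform $L^1$ bound gives Lipschitz continuity (hence absolute continuity) of $\psi$ while the pointwise-in-$t$ $L^1$ convergence identifies the everywhere-defined right derivative with $\E[\1_A\,g(t)]$. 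Your diagnosis that a naive telescoping/Riemann-sum argument would fail under merely pointwise convergence is accurate, and this decoupling is precisely how the literature avoids needing any uniformity in $t$. Two minor remarks, neither of which is a gap. First, the uniform-boundedness clause of the $p-\lim$ definition as written in the paper applies to the family $g^\delta-g$ rather than to $g^\delta$ itself; the bound $\sup_{t\le T,\,\delta}\E[\lvert g^\delta(t)\rvert]<\infty$ that you use then follows by combining that clause with $g=\mathcal{A}^\e f\in\mathcal{M}^\e$, which is part of the definition of $f\in\mathcal{D}(\mathcal{A}^\e)$. Second, the joint measurability of $(u,\omega)\mapsto\mathcal{A}^\e f(u,\omega)$, which you correctly flag as needed for Fubini and for $\int_0^t\mathcal{A}^\e f(u)\,du$ to define an adapted integrable process, is a standing hypothesis of the Kurtz--Kushner framework rather than something to be established here; it also disposes of the measurability in $u$ of $r(u)=\E[\1_A\,g(u)]$ (alternatively, $r$ is a pointwise limit of the continuous difference quotients of $\psi$, hence Borel). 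With those conventions in place, your argument is complete.
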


\subsection{Tightness}\label{tightnesssec}

\begin{prop}\label{tightness}
$\forall \lambda \in\mathcal{C}^{\infty}_0(\mathbb{R}^{2d})$, the family $(W_{\e,\lambda})_{\e\in(0,1)}$ is tight on $\mathcal{D}\left([0,+\infty),\mathbb{R} \right)$.
\end{prop}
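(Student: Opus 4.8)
The plan is to establish tightness through the perturbed-test-function method of Kurtz and Kushner \cite{kushner}, combining the pseudogenerator $\mathcal{A}^\e$ with the martingale representation of Theorem \ref{martingale}. Fix $\lambda\in\mathcal{C}^\infty_0(\mathbb{R}^{2d})$ and set $f_0^\e(t)=W_{\e,\lambda}(t)=\langle W_\e(t),\lambda\rangle_{L^2}$. The compact-containment half of the criterion is immediate: since $W_\e(t)\in\mathcal{B}_r$ for every $t$, Cauchy--Schwarz gives $|W_{\e,\lambda}(t)|\le r\|\lambda\|_{L^2}$ uniformly in $t$ and $\e$, so each $(W_{\e,\lambda}(t))_\e$ is tight in $\mathbb{R}$. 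It then remains to produce a perturbed test function $f^\e=f_0^\e+f_1^\e+f_2^\e$ that is $L^1$-close to $f_0^\e$ uniformly on $[0,T]$ and whose pseudogenerator is bounded in $L^1$ uniformly in $\e$ and in $t\le T$; the cited criterion then yields tightness of $(f^\e)_\e$, hence of $(W_{\e,\lambda})_\e$, on $\mathcal{D}([0,+\infty),\mathbb{R})$.

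From the weak form of \eqref{wignereq},
\[\mathcal{A}^\e f_0^\e(t)=\langle W_\e(t),\xt{k}\cdot\nabla_{\xt{x}}\lambda\rangle_{L^2}+\frac{1}{\e^{(1+\ga)/2}}\langle W_\e(t),\mathcal{L}_\e(t)\lambda\rangle_{L^2}.\]
The transport term is $O(1)$, but the potential term is singular of order $\e^{-(1+\ga)/2}$ and mean-zero. I would cancel it with a first corrector that integrates the fast fluctuations forward using the explicit Ornstein--Uhlenbeck structure \eqref{markovesp}: freezing $W_\e$ at time $t$,
\[f_1^\e(t)=\frac{1}{\e^{(1+\ga)/2}}\int_t^{+\infty}\mathbb{E}^\e_t\big[\langle W_\e(t),\mathcal{L}_\e(s)\lambda\rangle_{L^2}\big]\,ds,\]
which converges because $\mathbb{E}^\e_t[\widehat V(s/\e^{1+\ga},\cdot)]=e^{-\mathfrak g(\cdot)(s-t)/\e^{1+\ga}}\widehat V(t/\e^{1+\ga},\cdot)$ decays exponentially; the $s$-integration yields a factor $\e^{1+\ga}/\mathfrak g$, so that $f_1^\e=O(\e^{(1+\ga)/2})$. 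Applying $\mathcal{A}^\e$ to $f_1^\e$ then cancels the singular term and leaves a contribution quadratic in the potential (one factor of $\mathcal{L}_\e$ from the generator, one from $f_1^\e$), which is $O(1)$. Splitting this quadratic contribution into its $\mathcal{F}^\e_t$-conditional mean---which, by \eqref{markovvar} and \eqref{spectraldens}, converges to $\langle W_\e(t),\mathcal{L}\lambda\rangle_{L^2}$ with $\mathcal{L}$ as in \eqref{formgene}--\eqref{sigmadef}---and a mean-zero fluctuation, I would add a second corrector $f_2^\e$, built the same way, to absorb the fluctuation. The net effect is $\mathcal{A}^\e f^\e=\langle W_\e,\xt{k}\cdot\nabla_{\xt{x}}\lambda+\mathcal{L}\lambda\rangle_{L^2}+r_\e$ with a negligible remainder $r_\e$.

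The two quantitative requirements---$\sup_{t\le T}\mathbb{E}[|f_1^\e(t)|+|f_2^\e(t)|]\to 0$ and $\sup_\e\sup_{t\le T}\mathbb{E}[|\mathcal{A}^\e f^\e(t)|]<+\infty$---both reduce, via $\|W_\e\|_{L^2}\le r$ and Cauchy--Schwarz, to controlling $L^2$-norms of the integrands defining the correctors and the remainder, which amount to moments of the potential weighted by powers of $1/\mathfrak g$. This is exactly where I expect the main obstacle, and where the long-range hypothesis is essential: because $\int\sigma(\xt p)\,d\xt p=+\infty$, the naive weights $\widehat R_0/\mathfrak g$ are not integrable and the correctors are a priori ill-defined. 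The saving mechanism is the antisymmetric difference $\lambda(\xt x,\xt k-\xt p/2)-\lambda(\xt x,\xt k+\xt p/2)$ inside $\mathcal{L}_\e$, see \eqref{lepsilon}, which is $O(|\xt p|\,\|\nabla_{\xt k}\lambda\|)$ for small $\xt p$ and thus replaces $\widehat R_0/\mathfrak g$ by the integrable weights $\widehat R_0|\xt p|/\mathfrak g$, $\widehat R_0|\xt p|^2/\mathfrak g^2$, and $\widehat R_0|\xt p|^3/\mathfrak g^3$ appearing in \eqref{longrange}. Tracking this cancellation order by order---a single power for $f_1^\e$, quadratic products for $\mathcal{A}^\e f_1^\e$ and $f_2^\e$, and a cubic one for $r_\e$---is the technical heart of the argument, and is precisely what \eqref{longrange} is designed to furnish.
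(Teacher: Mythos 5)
Your overall strategy is the paper's: Kushner's pseudogenerator criterion, compact containment from $W_\e(t)\in\mathcal{B}_r$, a first corrector obtained by integrating the conditional expectation of the singular term forward in time via \eqref{markovesp}, and the key observation that the antisymmetric difference $\lambda(\xt{x},\xt{k}-\xt{p}/2)-\lambda(\xt{x},\xt{k}+\xt{p}/2)=\mathcal{O}(\lvert\xt{p}\rvert)$ trades the non-integrable weight $\widehat{R}_0/\mathfrak{g}$ for the integrable weights of \eqref{longrange}. (Two harmless deviations: the paper needs only $f_0^\e+f_1^\e$ for tightness, reserving the second corrector for the identification step, and it controls the $\sup_{t\leq T}$ moments of the Gaussian integrands via the entropy bounds of Adler--Taylor, a technical ingredient you gloss over.)

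There is, however, a genuine gap in your formula for $f_1^\e$. You freeze $W_\e$ at time $t$ and integrate $\E^\e_t[\mathcal{L}_\e(s)\lambda]$ over $s\in(t,\infty)$; after the $s$-integration this corrector carries the kernel $\widehat{V}(t/\e^{1+\ga},d\xt{p})\,e^{i\xt{p}\cdot\xt{x}/\e}/\mathfrak{g}(\xt{p})$. When you then apply $\mathcal{A}^\e$, the boundary term does cancel the singular term, but the contribution of $\partial_t W_\e$ through the transport part --- equivalently, $\xt{k}\cdot\nabla_{\xt{x}}$ falling on the fast oscillation $e^{i\xt{p}\cdot\xt{x}/\e}$ --- produces a term of size $\e^{(1+\ga)/2}\cdot\e^{-1}=\e^{(\ga-1)/2}$, which diverges for $\ga\in(0,1)$: your remainder $r_\e$ is not negligible and $\mathcal{A}^\e f^\e$ is not uniformly integrable. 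The corrector must be built along the free characteristics. The paper inserts the phase $e^{i(u-t)\xt{p}\cdot\xt{k}/\e}$ under the $u$-integral, so that the time integration yields the resolvent denominator $\mathfrak{g}(\xt{p})-i\e^{\ga}\xt{k}\cdot\xt{p}$ (the operator $\mathcal{L}_{1,\e}$); with this denominator the decay of the conditional expectation and the transport contribution combine into exactly $-(\mathfrak{g}(\xt{p})-i\e^{\ga}\xt{k}\cdot\xt{p})$ over the same denominator, i.e.\ they cancel the singular term with no divergent leftover. Once the corrector is repaired in this way, the rest of your plan --- the $\lvert\xt{p}\rvert$ gain, the role of \eqref{longrange} in bounding the quadratic and cubic products, and the emergence of $\big<W_\e,\mathcal{L}\lambda\big>_{L^2(\mathbb{R}^{2d})}$ --- goes through as you describe.
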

\begin{proof}
According to \cite[Theorem 4]{kushner} and because $W_\e$ is a process with values in $\mathcal{B}_r$, we need to show Lemma \ref{bound2} and Lemma \ref{A1} below. Throughout the proof Propostion \ref{tightness}, let $\lambda \in \mathcal{C}^{\infty}_0(\mathbb{R}^{2d})$, $f$ be a bounded smooth function, and $f_0 ^\e (t)=f(W_{\e,\lambda} (t))$. 
\begin{lem} \label{bound1}
$\forall T>0$,
\[\E\big[\sup_{t\in[0,T]}\|\mathcal{L}_\e\lambda(t)\|^2_{L^2(\mathbb{R}^{2d})}\big]<+\infty. \]
\end{lem}
\begin{proof}[of Lemma \ref{bound1}]
First,
\[ \|\mathcal{L}_\e\lambda(t)\|^2_{L^2(\mathbb{R}^{2d})}\leq \frac{1}{(2\pi)^d}\int_{-1/2}^{1/2}du \int d\xt{x}d\xt{k} \Big\lvert \int \widehat{V}\Big(\frac{t}{\e^{1+\ga}},d\xt{p}\Big)e^{i\xt{p}\cdot\xt{x}/\e}\xt{p}\cdot \nabla_{\xt{k}} \lambda(\xt{x},\xt{k}+u\xt{p})  \Big\rvert^2. \]
Let us fixe $\xt{x}$, $\xt{k}$, and $u$. Let
\begin{equation}\label{philambda}
\phi_{1,\lambda,\xt{x},\xt{k},u}(\xt{p})=e^{i\xt{p}\cdot\xt{x}/\e}\xt{p}\cdot\nabla_{\xt{k}}\lambda(\xt{x},\xt{k}+u\xt{p}).\end{equation}
It is clear that $\phi_{1,\lambda,\xt{x},\xt{k},u}\in\mathcal{H}$. Consequently, $V_1=\big<\widehat{V},\phi_{\lambda,\xt{x},\xt{k},u}\big>_{\mathcal{H}',\mathcal{H}}$ is centered Gaussian process with a pseudo-metric $m_1$ on $[0,T]$ given by 
\[m_1(t,s)=\mathbb{E}\left[\Big(V_1\Big(\frac{t}{\e^{1+\ga}}\Big) -V_1\Big(\frac{s}{\e^{1+\ga}}\Big)\Big)^2\right]^{1/2}.\]
Then,
\[m^2_1(t,s)\leq 2 \sup_{\xt{x},\xt{k}}\lvert \nabla_{\xt{k}}\lambda (\xt{x},\xt{k})\rvert^2  \left(\int d\xt{p}\widehat{R}_0(\xt{p}) \mathfrak{g}(\xt{p})\lvert \xt{p}\rvert^2\right)\frac{\lvert t-s\rvert}{\e^{1+\ga}} \quad \forall(t,s)\in[0,T]^2,\]
and
\[diam^2_{m_1}([0,T]) \leq 2\int \widehat{R}_0(\xt{p})\lvert \xt{p}\rvert^2 \lvert \nabla_{\xt{k}}\lambda(\xt{x}, \xt{k}+u\xt{p})\rvert^2. \]
Here, $diam_{m_1}([0,T])$ stands for the diameter of $[0,T]$ under the pseudo-metric $m_1$. According to \cite[Theorem 2.1.3]{adlertaylor}, we have
\[\begin{split}
\mathbb{E}\left[\sup_{t\in[0,T]}\Big \lvert V_1\Big(\frac{t}{\e^{1+\ga}}\Big)\Big\rvert ^2  \right]&\leq K \left(\int_0^{diam_{m_1}( [0,T])/2}H^{1/2}(r)dr\right)^2\\
&\leq C_1\left(\int_0^{\theta_1(\xt{x},\xt{k},u)} \sqrt{\ln\left( C_2\frac{T}{r^2\e^{1+\ga}}  \right)}dr\right)^2,
\end{split} \]
where $H(r)=\ln(N(r))$, and $N(r)$ denotes the smallest number of balls, for the pseudo-metric $m_1$, with radius $r$ to cover $[0,T]$. Moreover, $\theta_1$ is given by
\[\theta_1^2(\xt{x},\xt{k},u)=2\int \widehat{R}_0(\xt{p})\lvert \xt{p}\rvert^2 \lvert \nabla_{\xt{k}}\lambda(\xt{x}, \xt{k}+u\xt{p})\rvert^2.\] 
Consequently, 
\[\int d\xt{x}d\xt{k} \left(\int_0^{\theta_1(\xt{x},\xt{k},u)} \sqrt{\ln\left( C_2\frac{T}{r^2\e^{1+\ga}}  \right)}dr\right)^2 <+\infty,\]
since $\widehat{R}_0$ and $\lambda$ have a support included in a compact set, and that concludes the proof of this lemma.$\square$
\end{proof}
Thanks to Lemma \ref{bound1}, we can use the notion of pseudogenerator introduced in Section \ref{pseudogene}. Therefore, we have
\[
\mathcal{A}^\e f_0 ^\e (t)= f'(W_{\e,\lambda} (t))\left[W_{\e,\lambda_1}(t)+\frac{1}{\e^{\frac{1+\ga}{2}}}\big<W_\e(t),\mathcal{L}_\e(t)\lambda\big>_{L^2(\mathbb{R}^{2d})}\right] 
\]
where $\mathcal{L}_\e$ is defined by \eqref{lepsilon}, and 
\begin{equation}\label{lambda1}
\lambda_1(\xt{x},\xt{k})=\xt{k}\cdot\nabla_{\xt{x}}\lambda(\xt{x},\xt{k}).
\end{equation}
Let
\[
\begin{split}
f^\e _1 (t)=&\frac{1}{\e^{\frac{1+\ga}{2}}} f'(W_{\e,\lambda} (t))\int d\xt{x}d\xt{k}W_\e(t,\xt{x},\xt{k})\\
&\hspace{0.5cm}\times\int_{t}^{+\infty} \mathbb{E}^\e _t\Big[\int\frac{\widehat{V}\big(\frac{u}{\e^{1+\ga}},d\xt{p}\big)}{(2\pi)^d i}e^{i(u-t)\xt{p}\cdot \xt{k}/\e}e^{i\xt{p}\cdot\xt{x}/\e}\times\Big(\lambda\big(\xt{x},\xt{k}-\frac{\xt{p}}{2}\big)-\lambda\big(\xt{x},\xt{k}+\frac{\xt{p}}{2}\big)\Big)\Big] du. 
\end{split}\]
\begin{lem}\label{bound2}  $\forall T>0$, and $\eta>0$
\[\lim_{\e} \Pro\Big(\sup_{0\leq t \leq T} \lvert f^\e _1 (t)\rvert>\eta\Big) =0,\quad \text{and} \quad\sup_{t\geq 0}\mathbb{E}\left[\lvert f^\e _1 (t) \rvert \right]=\mathcal{O}(\e^{(1-\ga)/2}).\]
\end{lem}
\begin{proof}[of Lemma \ref{bound2}]
Using \eqref{markovesp}, we have
\[f^{\e}_1(t)= \e^{\frac{1+\ga}{2}} f'(W_{\e,\lambda} (t))\big<W_\e(t),\mathcal{L}_{1,\e}\lambda(t)\big>_{L^2(\mathbb{R}^{2d})},\]
with
\[
\mathcal{L}_{1,\e}\lambda(t,\xt{x},\xt{k})=\frac{1}{(2\pi)^d i}\int\frac{\widehat{V}\big(\frac{t}{\e^{1+\ga}},d\xt{p}\big)}{\mathfrak{g}(\xt{p})-i\e^{\ga}\xt{k}\cdot\xt{p}}e^{i\xt{p}\cdot\xt{x}/\e}\left(\lambda\big(\xt{x},\xt{k}-\frac{\xt{p}}{2}\big)-\lambda\big(\xt{x},\xt{k}+\frac{\xt{p}}{2}\big)\right).
\]

\begin{lem}\label{bound3}
\[\E\left[\sup_{t\in[0,T]}\|\mathcal{L}_{1,\e}\lambda(t)\|^2_{L^2(\mathbb{R}^{2d})} \right]<+\infty\quad\text{and}\quad \lim_\e \e^{\frac{1+\ga}{2}} \E\left[\sup_{t\in[0,T]}\|\mathcal{L}_{1,\e}\lambda(t)\|^2_{L^2(\mathbb{R}^{2d})} \right]=0.\]
\end{lem}
\begin{proof}[of Lemma \ref{bound3}]
Here, we use the same argument as in Lemma \ref{bound1}. First,
\[ \|\mathcal{L}_{1,\e}\lambda(t)\|^2_{L^2(\mathbb{R}^{2d})}\leq \frac{1}{(2\pi)^d}\int_{-1/2}^{1/2}du \int d\xt{x}d\xt{k} \Big\lvert \int \frac{\widehat{V}\big(\frac{t}{\e^{1+\ga}},d\xt{p}\big)}{\mathfrak{g}(\xt{p})-i\e^{\ga}\xt{k}\cdot\xt{p}}e^{i\xt{p}\cdot\xt{x}/\e}\xt{p}\cdot \nabla_{\xt{k}} \lambda(\xt{x},\xt{k}+u\xt{p})  \Big\rvert^2. \]
Let us fixe $\xt{x}$, $\xt{k}$, and $u$. Let
\begin{equation}\label{philambda1}\phi_{2,\lambda,\xt{x},\xt{k},u}(\xt{p})=\frac{e^{i\xt{p}\cdot\xt{x}/\e}}{\mathfrak{g}(\xt{p})-i\e^{\ga}\xt{k}\cdot\xt{p}}\xt{p}\cdot\nabla_{\xt{k}}\lambda(\xt{x},\xt{k}+u\xt{p}).\end{equation}
According to \eqref{longrange} $\phi_{2,\xt{x},\xt{k},u}\in\mathcal{H}$. Consequently, $V_2=\big<\widehat{V},\phi_{2,\lambda,\xt{x},\xt{k},u}\big>_{\mathcal{H}',\mathcal{H}}$ is centered Gaussian process with a pseudo-metric $m_2$ on $[0,T]$ given by 
\[m_2(t,s)=\mathbb{E}\left[\Big(V_2\Big(\frac{t}{\e^{1+\ga}}\Big) -V_2\Big(\frac{s}{\e^{1+\ga}}\Big)\Big)^2\right]^{1/2}.\]
Then,
\[m^2_2(t,s)\leq 2 \sup_{\xt{x},\xt{k}}\lvert \nabla_{\xt{k}}\lambda (\xt{x},\xt{k})\rvert^2  \left(\int d\xt{p}\widehat{R}_0(\xt{p}) \frac{\mathfrak{g}(\xt{p})\lvert \xt{p}\rvert^2}{\mathfrak{g}^2(\xt{p})}\right)\frac{\lvert t-s\rvert}{\e^{1+\ga}} \quad \forall(t,s)\in[0,T]^2,\]
and
\[diam^2_{m_2}([0,T]) \leq 2\int \widehat{R}_0(\xt{p})\frac{\lvert \xt{p}\rvert^2}{\mathfrak{g}^2(\xt{p})} \lvert \nabla_{\xt{k}}\lambda(\xt{x}, \xt{k}+u\xt{p})\rvert^2. \]
Here, $diam_{m_2}([0,T])$ stands for the diameter of $[0,T]$ under the pseudo-metric $m_2$. According to \cite[Theorem 2.1.3]{adlertaylor}, we have
\[\mathbb{E}\left[\sup_{t\in[0,T]}\Big\lvert V_2\Big(\frac{t}{\e^{1+\ga}}\Big) \Big\rvert^2 \right]\leq C_1\left(\int_0^{\theta_2(\xt{x},\xt{k},u)} \sqrt{\ln\left( C_2\frac{T}{r^2\e^{1+\ga}}  \right)}dr\right)^2,\]
where
\[\theta_2^2(\xt{x},\xt{k},u)=2\int \widehat{R}_0(\xt{p})\frac{\lvert \xt{p}\rvert^2}{\mathfrak{g}^2(\xt{p})} \lvert \nabla_{\xt{k}}\lambda(\xt{x}, \xt{k}+u\xt{p})\rvert^2.\] 
Consequently, 
\[\int d\xt{x}d\xt{p} \left(\int_0^{\theta_2(\xt{x},\xt{k},u)} \sqrt{\ln\left( C_2\frac{T}{r^2\e^{1+\ga}}  \right)}dr\right)^2<+\infty,\]
since $\widehat{R}_0$ and $\lambda$ have a support included in a compact set, and that concludes the proof of Lemma \ref{bound3}.$\square$
\end{proof}
Then, the proof of Lemma \ref{bound2} is a direct consequence of Lemma \ref{bound3}.$\square$
\end{proof}

The following lemma is a consequence of Lemma \ref{bound3}, Lemma \ref{bound4}, and Lemma \ref{bound5}.
\begin{lem}\label{A1}
$\forall T>0$, $\left\{\mathcal{A}^\e \left(f^\e _0 +f^\e _1\right)(t), \e \in(0,1), 0\leq t\leq T\right\}$ is uniformly integrable.
\end{lem}
\begin{proof}[of Lemma \ref{A1}]
First, let us show that we can compute the pseudogenerator at $f^\e _0 +f^\e _1$.

\begin{lem}\label{bound4}
\[\sup_{\e\in(0,1)}\E\left[\|\mathcal{L}_{\e}\big( \mathcal{L}_{1,\e}\lambda(t)\big)(t)\|^2_{L^2(\mathbb{R}^{2d})} \right]<+\infty.\]
\end{lem}
\begin{proof}[of Lemma \ref{bound4}]
We have
\[\begin{split}
\mathcal{L}_{\e}\big( \mathcal{L}_{1,\e}\lambda(t)\big)&(t,\xt{x},\xt{k})=\\
&\iint\widehat{V}\big(\frac{t}{\e^{1+\ga}},d\xt{p}_1\big)\widehat{V}\big(\frac{t}{\e^{1+\ga}},d\xt{p}_2\big)e^{i(\xt{p}_1+\xt{p}_2)\cdot \xt{x}/\e}\\
&\hspace{0.5cm}\times\Big(\frac{1}{\mathfrak{g}(\xt{p}_2)-i\e^{\ga}\big(\xt{k}-\frac{\xt{p}_1}{2}\big)\cdot\xt{p}_2}\big(\lambda\big(\xt{x},\xt{k}-\frac{\xt{p}_1}{2}-\frac{\xt{p}_2}{2}\big)-\lambda\big(\xt{x},\xt{k}-\frac{\xt{p}_1}{2}+\frac{\xt{p}_2}{2}\big) \big)\\
&\hspace{1cm}-\frac{1}{\mathfrak{g}(\xt{p}_2)-i\e^{\ga}\big(\xt{k}+\frac{\xt{p}_1}{2}\big)\cdot\xt{p}_2}\big(\lambda\big(\xt{x},\xt{k}+\frac{\xt{p}_1}{2}-\frac{\xt{p}_2}{2}\big)-\lambda\big(\xt{x},\xt{k}+\frac{\xt{p}_1}{2}+\frac{\xt{p}_2}{2}\big) \big)\Big).
\end{split}\]
Let us note that 
\begin{equation}\label{vargaussien}
\begin{split}
\E\big[\widehat{V}(t_1,d\xt{p}_1)\widehat{V}(t_2,d\xt{p}_2)&\widehat{V}^\ast(t_3,d\xt{p}_3)\widehat{V}^\ast(t_4,d\xt{p}_4)\big]=\\
&\hspace{0.4cm}(2\pi)^{2d}\tilde{R}(t_1-t_2,\xt{p}_1)\tilde{R}(t_3-t_4,\xt{p}_3)\delta(\xt{p}_1+\xt{p}_2)\delta(\xt{p}_3+\xt{p}_4)\\
&+(2\pi)^{2d}\tilde{R}(t_1-t_3,\xt{p}_1)\tilde{R}(t_2-t_4,\xt{p}_3)\delta(\xt{p}_1-\xt{p}_3)\delta(\xt{p}_2-\xt{p}_4)\\
&+(2\pi)^{2d}\tilde{R}(t_1-t_4,\xt{p}_1)\tilde{R}(t_2-t_3,\xt{p}_3)\delta(\xt{p}_1-\xt{p}_4)\delta(\xt{p}_2-\xt{p}_3).
\end{split}
\end{equation}
Then, using the smoothness of $\lambda$ and \eqref{longrange}, we obtain
\[\begin{split}
\E\big[\|\mathcal{L}_{\e}\big(& \mathcal{L}_{1,\e}\lambda(t)\big)(t)\|^2_{L^2(\mathbb{R}^{2d})} \big]\leq C \iint d\xt{p}_1 d\xt{p}_2\widehat{R}_0(\xt{p}_1)\widehat{R}_0(\xt{p}_2)\\
&\times\Big[\Big(\frac{\lvert \xt{p}_1\rvert^3}{\mathfrak{g}(\xt{p}_1)^2} \sup_{\xt{x},\xt{k}}\lvert \nabla_{\xt{k}}\lambda(\xt{x},\xt{k})\rvert+\frac{\lvert\xt{p}_1\rvert^2 }{\mathfrak{g}(\xt{p}_1)}\sup_{\xt{x},\xt{k}}\|D^2\lambda(\xt{x},\xt{k})\|  \Big) \\
&\hspace{0.5cm}\times\Big(\frac{\lvert \xt{p}_2\rvert^3}{\mathfrak{g}(\xt{p}_2)^2} \sup_{\xt{x},\xt{k}}\lvert \nabla_{\xt{k}}\lambda(\xt{x},\xt{k})\rvert+\frac{\lvert\xt{p}_2\rvert^2 }{\mathfrak{g}(\xt{p}_2)}\sup_{\xt{x},\xt{k}}\|D^2\lambda(\xt{x},\xt{k})\|  \Big) \\
&+\Big(\frac{\lvert \xt{p}_1\rvert\lvert \xt{p}_2\rvert^2}{\mathfrak{g}^2(\xt{p}_2)}\sup_{\xt{x},\xt{k}}\lvert \nabla_{\xt{x}}\lambda(\xt{x},\xt{k})\rvert+\frac{1}{\mathfrak{g}(\xt{p}_2)}(\lvert \xt{p}_1\rvert\lvert \xt{p}_2\rvert+\lvert \xt{p}_2\rvert^2)\sup_{\xt{x},\xt{k}}\|D^2\lambda(\xt{x},\xt{k})\|\Big)^2\\
&+\Big(\frac{\lvert \xt{p}_1\rvert\lvert \xt{p}_2\rvert^2}{\mathfrak{g}^2(\xt{p}_2)}\sup_{\xt{x},\xt{k}}\lvert \nabla_{\xt{x}}\lambda(\xt{x},\xt{k})\rvert+\frac{1}{\mathfrak{g}(\xt{p}_2)}(\lvert \xt{p}_1\rvert\lvert \xt{p}_2\rvert+\lvert \xt{p}_2\rvert^2)\sup_{\xt{x},\xt{k}}\|D^2\lambda(\xt{x},\xt{k})\|\Big)\\
&\hspace{0.5cm}\times\Big(\frac{\lvert \xt{p}_2\rvert\lvert \xt{p}_1\rvert^2}{\mathfrak{g}^2(\xt{p}_1)}\sup_{\xt{x},\xt{k}}\lvert \nabla_{\xt{x}}\lambda(\xt{x},\xt{k})\rvert+\frac{1}{\mathfrak{g}(\xt{p}_1)}(\lvert \xt{p}_2\rvert\lvert \xt{p}_1\rvert+\lvert \xt{p}_1\rvert^2)\sup_{\xt{x},\xt{k}}\|D^2\lambda(\xt{x},\xt{k})\|\Big)\Big]<+\infty,
\end{split}\]
since $\lambda$ and $\widehat{R}_0$ have a support included in a compact set. That concludes the proof of Lemma \ref{bound4}.
$\square$.
\end{proof}

\begin{lem}\label{bound5}
\[\sup_{\e\in(0,1)}\E\big[\|\mathcal{L}_{\e}\lambda(t)\|^2_{L^2(\mathbb{R}^{2d})} \times\|\mathcal{L}_{1,\e}\lambda(t)\|^2_{L^2(\mathbb{R}^{2d})}\big]<+\infty.\]
\end{lem}
\begin{proof}[of Lemma \ref{bound5}]
This result follows from the temporal stationarity of the process $(\widehat{V}(t,\cdot))_{t}$. We have
\[\begin{split}
\E\big[\|\mathcal{L}_{\e}\lambda(t)&\|^2_{L^2(\mathbb{R}^d\times\mathbb{R}^d)}\times\|\mathcal{L}_{1,\e}\lambda(t)\|^2_{L^2(\mathbb{R}^d\times\mathbb{R}^d)}\big]\\
&\leq \iint d\xt{x}_1 d\xt{k}_1d\xt{x}_2d\xt{k}_2 \int_{-1/2}^{1/2}du_1\int_{-1/2}^{1/2}du_2\\
&\hspace{0.5cm}\E\big[\lvert \big<\widehat{V}(t/\e^{1+\ga}),\phi_{1,\lambda,\xt{x}_1,\xt{k}_1,u_1}\big>_{\mathcal{H}',\mathcal{H}}\rvert^2 \lvert\big<\widehat{V}(t/\e^{1+\ga}),\phi_{2,\lambda,\xt{x}_2,\xt{k}_2,u_2}\big>_{\mathcal{H}',\mathcal{H}} \rvert^2 \big]\\
& \leq \iint d\xt{x}_1 d\xt{k}_1d\xt{x}_2d\xt{k}_2 \int_{-1/2}^{1/2}du_1\int_{-1/2}^{1/2}du_2\\
&\hspace{0.5cm}\Big(\E\big[\lvert \big<\widehat{V}(t/\e^{1+\ga}),\phi_{1,\lambda,\xt{x}_1,\xt{k}_1,u_1}\big>_{\mathcal{H}',\mathcal{H}}\rvert^4\big]\Big)^{1/2}\Big(\E\big[ \lvert\big<\widehat{V}(t/\e^{1+\ga}),\phi_{2,\lambda,\xt{x}_2,\xt{k}_2,u_2}\big>_{\mathcal{H}',\mathcal{H}} \rvert^4 \big]\Big)^{1/2},
\end{split}\]
where $\phi_{1,\lambda,\xt{x},\xt{k},u}$ and $\phi_{2,\lambda,\xt{x},\xt{k},u}$ are defined respectively by \eqref{philambda} and \eqref{philambda1}. Moreover,
\[\begin{split}\E\big[\lvert \big<\widehat{V}(t/\e^{1+\ga}),\phi_{1,\lambda,\xt{x}_1,\xt{k}_1,u_1}\big>_{\mathcal{H}',\mathcal{H}}\rvert^4\big]&=3\E\big[\lvert \big<\widehat{V}(0),\phi_{1,\lambda,\xt{x},\xt{k},u}\big>_{\mathcal{H}',\mathcal{H}}\rvert^2\big]^2\\
&\leq 3\Big[\int\widehat{R}_0(\xt{p})\lvert \xt{p}\rvert^2 \lvert \nabla_{\xt{k}}\lambda(\xt{x},\xt{k}+u\xt{p})\rvert^2\Big]^2<+\infty,\end{split}\]
and thanks to $\eqref{longrange}$
\[\begin{split}
\E\big[\lvert \big<\widehat{V}(t/\e^{1+\ga}),\phi_{2,\lambda,\xt{x}_1,\xt{k}_1,u_1}\big>_{\mathcal{H}',\mathcal{H}}\rvert^4\big]&=3\E\big[\lvert \big<\widehat{V}(0),\phi_{2,\lambda,\xt{x},\xt{k},u}\big>_{\mathcal{H}',\mathcal{H}}\rvert^2\big]^2\\
&\leq 3\Big[\int\widehat{R}_0(\xt{p})\frac{\lvert \xt{p}\rvert^2}{\mathfrak{g}^2(\xt{p})} \lvert \nabla_{\xt{k}}\lambda(\xt{x},\xt{k}+u\xt{p})\rvert^2\Big]^2<+\infty.\end{split}\]
That conclude the proof of Lemma \ref{bound5}, since $\lambda$ and $\widehat{R}_0$ have a support included in a compact set.$\square$
\end{proof}

Consequently, thanks to Lemma \ref{bound3}, Lemma \ref{bound4}, and Lemma \ref{bound5}, we have
\[\begin{split}
\mathcal{A}^\e(f^\e _0 +f^\e _1)(t)=&f'(W_{\e,\lambda}(t))\left[W_{\e,\lambda_1}(t)+\big<W_\e(t),\mathcal{L}_{\e}\big(\mathcal{L}_{1,\e}\lambda(t)\big)(t)\big>_{L^2(\mathbb{R}^{2d})}\right]\\
&+\partial^2_v f(W_{\e,\lambda}(t))\big<W_\e(t),\mathcal{L}_{1,\e}\lambda(t)\big>_{L^2(\mathbb{R}^{2d})}\big<W_\e(t),\mathcal{L}_{\e}\lambda(t)\big>_{L^2(\mathbb{R}^{2d})}\\
&+\mathcal{O}(\e^{(1+\ga)/2}),
\end{split}\] 
and $\sup_{\e,t}\E[\lvert \mathcal{A}(f^\e_0+f^\e_1)(t)\rvert^2]<+\infty$. That conclude the proof of Lemma \ref{A1} and then the proof of Proposition \ref{tightness}. $\square$ $\blacksquare$
\end{proof}  

\end{proof}

\subsection{Identification of all subsequence limits}\label{identificationsec}

To identify all the subsequence limits of the process $(W_\e)_\e$, we show that all such limit processes are a weak solution of a deterministic radiative transfer equation. Let us note that in this case all the limit processes are deterministic. This fact means that the convergence also holds in probability. In the next section, we will see that this transport equation is well posed. In particular, this will imply the convergence of the process $(W_\e)_\e$ himself to the unique solution of the radiative transfer equation.

\begin{prop}\label{identification}
Let 	$W$ be a accumulation point of $(W_\e)_\e$. Then, $W$ is a weak solution of the radiative transfer equation
 \[
\partial_t W +\xt{k}\cdot \nabla_{\xt{x}} W=\mathcal{L}W,
\]
with $W(0,\xt{x},\xt{k})=W_0(\xt{x},\xt{k})$.
Here, $\mathcal{L}$ is defined by
\[
\mathcal{L}\varphi(\xt{k})=\int d\xt{p}\sigma(\xt{p}-\xt{k})\big(\varphi(\xt{p})-\varphi(\xt{k})\big)=\int d\xt{p}\sigma(\xt{p})\big(\varphi(\xt{k}+\xt{p})-\varphi(\xt{k})\big)\quad \forall \varphi \in \mathcal{C}^\infty_0(\mathbb{R}^d),
\]
with
\[\sigma(\xt{p})=\frac{2\widehat{R}_0(\xt{p})}{(2\pi)^d\mathfrak{g}(\xt{p})}.\] 
\end{prop}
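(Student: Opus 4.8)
The plan is to characterize every accumulation point $W$ as the solution of a martingale problem for the transport--scattering generator, and then read off the weak form of \eqref{radtranseq}. I would work with the perturbed test function $f_0^\e + f_1^\e$ built in Section~\ref{tightnesssec}: by Theorem~\ref{martingale} the process
\[
M_f^\e(t) = (f_0^\e + f_1^\e)(t) - (f_0^\e + f_1^\e)(0) - \int_0^t \mathcal{A}^\e(f_0^\e + f_1^\e)(u)\,du
\]
is an $(\mathcal{F}^\e_t)$-martingale, and Lemma~\ref{A1} provides the explicit expansion of $\mathcal{A}^\e(f_0^\e + f_1^\e)$ together with the uniform integrability needed to pass to the limit. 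Fixing a convergent subsequence $W_{\e_n}\to W$ in law on $\mathcal{C}([0,+\infty),(\mathcal{B}_r,d_{\mathcal{B}_r}))$ and invoking the Skorokhod representation theorem, I would realize this convergence almost surely, so that for each fixed $\lambda$ one has $W_{\e_n,\lambda}(t)=\langle W_{\e_n}(t),\lambda\rangle\to\langle W(t),\lambda\rangle$, since $d_{\mathcal{B}_r}$-convergence is weak-$L^2$ convergence on $\mathcal{B}_r$.

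The core of the argument is the limit of the drift. First I would identify the principal term $\langle W_\e(u),\mathcal{L}_\e(\mathcal{L}_{1,\e}\lambda(u))\rangle$: expanding the double operator as in Lemma~\ref{bound4} and contracting the two Gaussian increments through $\E[\widehat{V}(\tau,d\xt{p}_1)\widehat{V}(\tau,d\xt{p}_2)]=(2\pi)^d\widehat{R}_0(\xt{p}_1)\delta(\xt{p}_1+\xt{p}_2)$, the resonant pairing $\xt{p}_2=-\xt{p}_1$ kills the phase $e^{i(\xt{p}_1+\xt{p}_2)\cdot\xt{x}/\e}$ and sends the resolvent $(\mathfrak{g}(\xt{p})-i\e^\ga(\cdots)\cdot\xt{p})^{-1}\to\mathfrak{g}(\xt{p})^{-1}$; using the symmetry of $\widehat{R}_0$ and $\mathfrak{g}$ this yields $\E[\mathcal{L}_\e(\mathcal{L}_{1,\e}\lambda)]\to\mathcal{L}\lambda$ with $\sigma=2\widehat{R}_0/((2\pi)^d\mathfrak{g})$ as in \eqref{sigmadef}. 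I would then argue that the nonresonant contribution and the fluctuation around this average vanish in $L^1$, using the exponential time-decorrelation of $\widehat{V}$ recorded in \eqref{markovesp}--\eqref{markovvar} and the oscillation of the surviving phases, so that $\langle W_{\e_n}(u),\mathcal{L}_{\e_n}(\mathcal{L}_{1,\e_n}\lambda)\rangle\to\langle W(u),\mathcal{L}\lambda\rangle$. Here I would stress that $\mathcal{L}\lambda\in L^2(\mathbb{R}^{2d})$ even though $\int\sigma=+\infty$, precisely because the increment $\lambda(\cdot+\xt{p})-\lambda(\cdot)$ absorbs the singularity of $\sigma$ at the origin under \eqref{longrange}; this is the long-range feature that keeps the transfer operator well defined.

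The remaining, and in my view hardest, point is the second-order contribution $\partial_v^2 f(W_{\e,\lambda})\langle W_\e,\mathcal{L}_{1,\e}\lambda\rangle\langle W_\e,\mathcal{L}_\e\lambda\rangle$ appearing in Lemma~\ref{A1}. Both factors are $\mathcal{O}(1)$ in $L^2$ by Lemma~\ref{bound1} and Lemma~\ref{bound3}, so this term does not vanish termwise, and its survival would make the limit a genuine diffusion rather than a deterministic flow. I would show that its time-integrated expectation against any bounded functional of the past tends to $0$: writing the product as a quadratic form in $\widehat{V}(u/\e^{1+\ga})$ with the rapidly oscillating kernel $e^{i(\xt{p}_1+\xt{p}_2)\cdot\xt{x}/\e}$ weighted by the $L^2$-bounded $W_\e$, the decorrelation in the fast time variable together with a Riemann--Lebesgue cancellation of the non-diagonal phases should force the limit to be zero. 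This is exactly the self-averaging mechanism, and establishing it rigorously --- controlling a random quadratic functional that is only bounded, not small --- is the main obstacle.

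With these three limits in hand I would pass to the limit in the martingale identity $\E[(M_f^{\e_n}(t)-M_f^{\e_n}(s))\Phi]=0$ for bounded continuous functionals $\Phi$ of the trajectory up to time $s$. Using $f_1^\e\to0$ (Lemma~\ref{bound2}) for the boundary terms, the continuity of $f$ and $f'$, and the three convergences above, I would obtain that
\[
f(\langle W(t),\lambda\rangle) - f(\langle W(0),\lambda\rangle) - \int_0^t f'(\langle W(u),\lambda\rangle)\big(\langle W(u),\lambda_1\rangle + \langle W(u),\mathcal{L}\lambda\rangle\big)\,du
\]
is a martingale for every smooth bounded $f$, with $\lambda_1=\xt{k}\cdot\nabla_{\xt{x}}\lambda$. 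Because the limiting generator is first order in $f$ (the second-order term having been shown to vanish), specializing to $f(v)=v$ and $f(v)=v^2$ shows that $t\mapsto\langle W(t),\lambda\rangle-\langle W_0,\lambda\rangle-\int_0^t(\langle W,\lambda_1\rangle+\langle W,\mathcal{L}\lambda\rangle)du$ is a continuous martingale with zero quadratic variation starting from the deterministic value $0$ (recall $\langle W(0),\lambda\rangle=\langle W_0,\lambda\rangle$ by \eqref{weakcvinit}), hence identically zero. Since this holds for every $\lambda\in\mathcal{C}^\infty_0(\mathbb{R}^{2d})$, it is precisely the weak form of \eqref{radtranseq} with $\mathcal{L}$ and $\sigma$ as stated; the deterministic character of $W$ then follows a posteriori from the well-posedness established in the next section.
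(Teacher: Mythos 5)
Your overall strategy (perturbed test functions, passage to the limit in the martingale identity, then $f(v)=v$ and zero quadratic variation) is the paper's strategy, and your first and last steps are sound. But there is a genuine gap exactly where you flag ``the main obstacle'': you stop at the corrector $f_0^\e+f_1^\e$, whose pseudogenerator still contains two $\mathcal{O}(1)$ \emph{random} terms --- the fluctuation of $\big<W_\e,\mathcal{L}_\e\big(\mathcal{L}_{1,\e}\lambda\big)\big>$ around its mean, and the quadratic term $\partial^2_v f(W_{\e,\lambda})\big<W_\e,\mathcal{L}_{1,\e}\lambda\big>\big<W_\e,\mathcal{L}_\e\lambda\big>$ --- and you propose to show directly that their time-integrated expectations vanish by ``decorrelation plus Riemann--Lebesgue.'' That argument cannot be run as stated: $W_\e$ is itself a functional of $\widehat{V}$, correlated with the quadratic forms in $\widehat{V}(u/\e^{1+\ga})$ you want to average out, and it carries its own $\e$-scale oscillations in $\xt{x}$, so neither the Gaussian contraction nor the Riemann--Lebesgue lemma applies to the product $W_\e\times(\text{quadratic form in }\widehat{V})$ without first decoupling them. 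The paper's resolution is to introduce a \emph{second} corrector $f_2^\e$, built from $H_{1,\e}$ and $H_{2,\e}$ (time integrals from $t$ to $+\infty$ of the conditional expectation of the quadratic Gaussian increments minus their stationary mean, computed explicitly via \eqref{markovesp}--\eqref{markovvar}). Lemma \ref{bound6} shows $\sup_t\E[\lvert f_2^\e(t)\rvert]=\mathcal{O}(\e^{1+\ga})$, so adding $f_2^\e$ costs nothing in the limit, while $\mathcal{A}^\e f_2^\e$ cancels the random parts and leaves, in \eqref{A2}, only the \emph{deterministic} kernels $G_{1,\e}$ and $G_{2,\e}$ paired against $W_\e$ and $W_\e\otimes W_\e$. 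Only then does Riemann--Lebesgue enter legitimately: $G_{2,\e}\lambda$ carries the explicit phase $e^{i\xt{p}\cdot(\xt{x}_1-\xt{x}_2)/\e}$ against an $L^1$ density in $\xt{p}$, so $\|G_{2,\e}\lambda\|_{L^2(\mathbb{R}^{4d})}\to 0$ (Lemma \ref{bound7}) and the quadratic term dies by Cauchy--Schwarz against $\|W_\e\otimes W_\e\|_{L^2}\leq r^2$; meanwhile $G_{1,\e}\lambda\to\mathcal{L}\lambda$ in $L^2$ (Lemma \ref{bound8}).

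Two smaller remarks. Your appeal to the Skorokhod representation is unnecessary (and slightly awkward on the non-Polish-looking but in fact compact metric space $(\mathcal{B}_r,d_{\mathcal{B}_r})$): the paper passes to the limit directly in $\E\big[\Phi(W_{\e,\mu_j}(s_j))\big(f^\e(t)-f^\e(s)-\int_s^t\mathcal{A}^\e f^\e\big)\big]=0$ using convergence in distribution plus the uniform integrability of Lemma \ref{A1}, which is all that is needed for a martingale-problem identification. Your observation that $\mathcal{L}\lambda\in L^2$ despite $\int\sigma=+\infty$, because the increment $\lambda(\cdot+\xt{p})-\lambda(\cdot)$ absorbs the singularity under \eqref{longrange}, is correct and is indeed the content of Lemma \ref{bound8}. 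But as written, the proof is incomplete at its central step, and completing it essentially forces you to reconstruct the corrector $f_2^\e$.
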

\begin{proof}[of Proposition \ref{identification}]
In this proof we use the following notation
\[\varphi\otimes\psi(\xt{x}_1,\xt{k}_1,\xt{x}_2,\xt{k}_2)= \varphi(\xt{x}_1,\xt{k}_1)\psi(\xt{x}_2,\xt{k}_2).\]
 Let
\[
\begin{split}
f^\e _2 (t)=&\partial_v f(W_{\e,\lambda} (t))\big<W_\e(t),H_{1,\e}(t)\big>_{L^2(\mathbb{R}^{2d})}\\
&+\partial^2_v f(W_{\e,\lambda} (t))\big<W_\e(t)\otimes W_\e(t),H_{2,\e}(t)\big>_{L^2(\mathbb{R}^{4d})},
\end{split}
\]
where
\[\begin{split}
H_{1,\e}(t,&\xt{x},\xt{k})=\\
&\frac{1}{(2\pi)^{2d}i^2}\int_{t}^{+\infty}du\iint \Big(\mathbb{E}^\e _t\big[ \widehat{V}(u/\e^{1+\ga},d\xt{p}_1) \widehat{V}(u/\e^{1+\ga},d\xt{p}_2)\big] -\mathbb{E}\big[ \widehat{V}(0,d\xt{p}_1) \widehat{V}(0,d\xt{p}_2)\big] \Big) \\
&\times  e^{i(\xt{p}_1+\xt{p}_2)\cdot\xt{x}/\e}e^{i(u-t)(\xt{p}_1+\xt{p}_2)\cdot \xt{k}/\e}\\
&\times\Big[\frac{1}{\mathfrak{g}(\xt{\xt{p}_2})-i \e^\ga \big(\xt{k}-\frac{\xt{p}_1}{2}\big)\cdot\xt{p}_2}\Big(\lambda\big(\xt{x},\xt{k}-\frac{\xt{p}_1}{2}-\frac{\xt{p}_2}{2}\big)-\lambda\big(\xt{x},\xt{k}-\frac{\xt{p}_1}{2}+\frac{\xt{p}_2}{2}\big)\Big)\\
&\hspace{2cm}-\frac{1}{\mathfrak{g}(\xt{\xt{p}_2})-i\e^\ga\big(\xt{k}+\frac{\xt{p}_1}{2}\big)\cdot\xt{p}_2}\Big(\lambda\big(\xt{x},\xt{k}+\frac{\xt{p}_1}{2}-\frac{\xt{p}_2}{2}\big)-\lambda\big(\xt{x},\xt{k}+\frac{\xt{p}_1}{2}+\frac{\xt{p}_2}{2}\big)\Big)\Big] ,
\end{split}\]
and
\[\begin{split}
H_{2,\e}(t,&\xt{x}_1,\xt{k}_1,\xt{x}_2,\xt{k}_2)=\\
&\frac{1}{(2\pi)^{2d}i^2}\int_{t}^{+\infty}du\iint \Big(\mathbb{E}^\e _t\big[ \widehat{V}(u/\e^{1+\ga},d\xt{p}_1) \widehat{V}(u/\e^{1+\ga},d\xt{p}_2)\big] -\mathbb{E}\big[ \widehat{V}(0,d\xt{p}_1) \widehat{V}(0,d\xt{p}_2)\big] \Big) \\
&\times e^{i\xt{p}_1\cdot\xt{x}_1/\e}e^{i\xt{p}_2\cdot\xt{x}_2/\e}  e^{i(u-t)(\xt{p}_1\cdot \xt{k}_1+\xt{p}_2\cdot \xt{k}_2)/\e}\frac{1}{\mathfrak{g}(\xt{\xt{p}_1})-i\e^\ga\xt{k}_1\cdot\xt{p}_1}\\
&\times \Big(\lambda\big(\xt{x}_1,\xt{k}_1-\frac{\xt{p}_1}{2}\big)-\lambda\big(\xt{x}_1,\xt{k}_1+\frac{\xt{p}_1}{2}\big)\Big)\\
&\times\Big(\lambda\big(\xt{x}_2,\xt{k}_2-\frac{\xt{p}_2}{2}\big)-\lambda\big(\xt{x}_2,\xt{k}_2+\frac{\xt{p}_2}{2}\big)\Big).
\end{split}\]
However, according to \eqref{markovvar}
\[\begin{split}
\E^\e_t\Big[\widehat{V}\Big(u+&\frac{t}{\e^{1+\ga}},d\xt{p}_1\Big)\widehat{V}\Big(u+\frac{t}{\e^{1+\ga}},d\xt{p}_2\Big)\Big]-\E\Big[\widehat{V}(0,d\xt{p}_1)\widehat{V}(0,d\xt{p}_2)\Big]\\
&=e^{-(\mathfrak{g}(\xt{p}_1)+\mathfrak{g}(\xt{p}_2))u}\widehat{V}\Big(\frac{t}{\e^{1+\ga}},d\xt{p}_1\Big)\widehat{V}\Big(\frac{t}{\e^{1+\ga}},d\xt{p}_2\Big)-(2\pi)^d e^{-2\mathfrak{g}(\xt{p}_1)u}\widehat{R}_0(\xt{p}_1)\delta(\xt{p}_1+\xt{p}_2),
\end{split}\]
and
\[\begin{split}
\E^\e_t\Big[\widehat{V}\Big(u+&\frac{t}{\e^{1+\ga}},d\xt{p}_1\Big)\widehat{V}^\ast\Big(u+\frac{t}{\e^{1+\ga}},d\xt{p}_2\Big)\Big]-\E\Big[\widehat{V}(0,d\xt{p}_1)\widehat{V}^\ast(0,d\xt{p}_2)\Big]\\
&=e^{-(\mathfrak{g}(\xt{p}_1)+\mathfrak{g}(\xt{p}_2))u}\widehat{V}\Big(\frac{t}{\e^{1+\ga}},d\xt{p}_1\Big)\widehat{V}^\ast\Big(\frac{t}{\e^{1+\ga}},d\xt{p}_2\Big)-(2\pi)^d e^{-2\mathfrak{g}(\xt{p}_1)u}\widehat{R}_0(\xt{p}_1)\delta(\xt{p}_1-\xt{p}_2).
\end{split}\]
Consequently,
\[
\begin{split}
f^\e _2 (t)=&\e^{1+\ga}\Big[\partial_v f(W_{\e,\lambda} (t))\big<W_\e(t),\tilde{H}_{1,\e}(t)\big>_{L^2(\mathbb{R}^{2d})}\\
&+\partial^2_v f(W_{\e,\lambda} (t))\big<W_\e(t)\otimes W_\e(t),\tilde{H}_{2,\e}(t)\big>_{L^2(\mathbb{R}^{4d})}\Big],
\end{split}
\]
where
\[\begin{split}
\tilde{H}_{1,\e}(t,\xt{x},&\xt{k})=\frac{1}{(2\pi)^{2d}i^2}\iint \widehat{V}(t/\e^{1+\ga},d\xt{p}_1) \widehat{V}(t/\e^{1+\ga},d\xt{p}_2)\frac{ e^{i(\xt{p}_1+\xt{p}_2)\cdot\xt{x}/\e}}{(\mathfrak{g}(\xt{p}_1)+\mathfrak{g}(\xt{p}_2)-i\e^\ga\xt{k}\cdot(\xt{p}_1+\xt{p}_2))}\\
&\times  \Big[\frac{1}{\mathfrak{g}(\xt{\xt{p}_2})-i\e^\ga\big(\xt{k}-\frac{\xt{p}_1}{2}\big)\cdot\xt{p}_2}\Big(\lambda\big(\xt{x},\xt{k}-\frac{\xt{p}_1}{2}-\frac{\xt{p}_2}{2}\big)-\lambda\big(\xt{x},\xt{k}-\frac{\xt{p}_1}{2}+\frac{\xt{p}_2}{2}\big)\Big)\\
&\hspace{1cm}-\frac{1}{\mathfrak{g}(\xt{\xt{p}_2})-i\e^\ga\big(\xt{k}+\frac{\xt{p}_1}{2}\big)\cdot\xt{p}_2}\Big(\lambda\big(\xt{x},\xt{k}+\frac{\xt{p}_1}{2}-\frac{\xt{p}_2}{2}\big)-\lambda\big(\xt{x},\xt{k}+\frac{\xt{p}_1}{2}+\frac{\xt{p}_2}{2}\big)\Big)\Big]\\
&+\frac{1}{(2\pi)^d}\int d\xt{p}\frac{\widehat{R}_0(\xt{p})}{2\mathfrak{g}(\xt{p})}\Big[\frac{1}{\mathfrak{g}(\xt{\xt{p}})+i\e^\ga \big(\xt{k}-\frac{\xt{p}}{2}\big)\cdot\xt{p}}\Big(\lambda(\xt{x},\xt{k})-\lambda(\xt{x},\xt{k}-\xt{p})\Big)\\
&\hspace{1cm}-\frac{1}{\mathfrak{g}(\xt{\xt{p}})+i\e^\ga\big(\xt{k}+\frac{\xt{p}}{2}\big)\cdot\xt{p}}\Big(\lambda(\xt{x},\xt{k}+\xt{p})-\lambda(\xt{x},\xt{k})\Big)\Big],
\end{split}\]
and
\[\begin{split}
\tilde{H}_{2,\e}(t,\xt{x}_1,\xt{k}_1,\xt{x}_2,\xt{k}_2)=&\frac{1}{(2\pi)^{2d}i^2} \iint \widehat{V}(t/\e^{1+\ga},d\xt{p}_1) \widehat{V}(t/\e^{1+\ga},d\xt{p}_2) \\
&\times \frac{e^{i\xt{p}_1\cdot\xt{x}_1/\e}e^{i\xt{p}_2\cdot\xt{x}_2/\e}}{(\mathfrak{g}(\xt{\xt{p}_1})-i\e^\ga\xt{k}_1\cdot\xt{p}_1)(\mathfrak{g}(\xt{\xt{p}_1})+\mathfrak{g}(\xt{\xt{p}_2})-i\e^\ga(\xt{k}_1\cdot\xt{p}_1+\xt{k}_2\cdot\xt{p}_2))}\\
&\times \Big(\lambda\big(\xt{x}_1,\xt{k}_1-\frac{\xt{p}_1}{2}\big)-\lambda\big(\xt{x}_1,\xt{k}_1+\frac{\xt{p}_1}{2}\big)\Big)\\
&\times\Big(\lambda\big(\xt{x}_2,\xt{k}_2-\frac{\xt{p}_2}{2}\big)-\lambda\big(\xt{x}_2,\xt{k}_2+\frac{\xt{p}_2}{2}\big)\Big)\\
&+\frac{1}{(2\pi)^d}\int d\xt{p} \frac{\widehat{R}_0(\xt{p})e^{i\xt{p}\cdot(\xt{x}_1-\xt{x}_2)/\e}}{(\mathfrak{g}(\xt{p})-i\e^\ga\xt{k}_1\cdot\xt{p})(2\mathfrak{g}(\xt{p})-i\e^\ga(\xt{k}_1-\xt{k}_2)\cdot\xt{p} )}\\
&\times \Big(\lambda\big(\xt{x}_1,\xt{k}_1-\frac{\xt{p}}{2}\big)-\lambda\big(\xt{x}_1,\xt{k}_1+\frac{\xt{p}}{2}\big)\Big)\\
&\times\Big(\lambda\big(\xt{x}_2,\xt{k}_2+\frac{\xt{p}}{2}\big)-\lambda\big(\xt{x}_2,\xt{k}_2-\frac{\xt{p}}{2}\big)\Big).
\end{split}\]

\begin{lem}\label{bound6}
\[ \sup_{t\geq 0} \E[\lvert f^\e_2(t)\rvert]=\mathcal{O}(\e^{1+\ga}).\]
\end{lem}
\begin{proof}[of Lemma \ref{bound6}]
In the same way as in Lemma \ref{bound4}, using \eqref{longrange}, \eqref{vargaussien}, the smoothness of $\lambda$, and that $\lambda$ and $\widehat{R}_0$ have a support included in a compact set, we have
\[\begin{split}
\sup_{\e,t}&\E[\|\tilde{H}_{1,\e}(t)\|^2_{L^{\mathbb{R}^{2d}}}]
\leq C \iint d\xt{p}_1 d\xt{p}_2\widehat{R}_0(\xt{p}_1)\widehat{R}_0(\xt{p}_2)\\
&\times\Big[\Big(\frac{\lvert \xt{p}_1\rvert^3}{\mathfrak{g}(\xt{p}_1)^3} \sup_{\xt{x},\xt{k}}\lvert \nabla_{\xt{k}}\lambda(\xt{x},\xt{k})\rvert+\frac{\lvert\xt{p}_1\rvert^2 }{\mathfrak{g}^2(\xt{p}_1)}\sup_{\xt{x},\xt{k}}\|D^2\lambda(\xt{x},\xt{k})\|  \Big) \\
&\hspace{0.5cm}\times\Big(\frac{\lvert \xt{p}_2\rvert^3}{\mathfrak{g}^3(\xt{p}_2)} \sup_{\xt{x},\xt{k}}\lvert \nabla_{\xt{k}}\lambda(\xt{x},\xt{k})\rvert+\frac{\lvert\xt{p}_2\rvert^2 }{\mathfrak{g}^2(\xt{p}_2)}\sup_{\xt{x},\xt{k}}\|D^2\lambda(\xt{x},\xt{k})\|  \Big) \\
&+\Big(\frac{\lvert \xt{p}_1\rvert\lvert \xt{p}_2\rvert^2}{\mathfrak{g}(\xt{p}_1)\mathfrak{g}^2(\xt{p}_2)}\sup_{\xt{x},\xt{k}}\lvert \nabla_{\xt{x}}\lambda(\xt{x},\xt{k})\rvert+\Big(\frac{\lvert \xt{p}_1\rvert\lvert \xt{p}_2\rvert}{\mathfrak{g}(\xt{p}_1)\mathfrak{g}(\xt{p}_2)}+\frac{\lvert \xt{p}_2\rvert^2}{\mathfrak{g}^2(\xt{p}_2)}\Big)\sup_{\xt{x},\xt{k}}\|D^2\lambda(\xt{x},\xt{k})\|\Big)^2\\
&+\Big(\frac{\lvert \xt{p}_1\rvert\lvert \xt{p}_2\rvert^2}{\mathfrak{g}(\xt{p}_1)\mathfrak{g}^2(\xt{p}_2)}\sup_{\xt{x},\xt{k}}\lvert \nabla_{\xt{x}}\lambda(\xt{x},\xt{k})\rvert+\Big(\frac{\lvert \xt{p}_1\rvert\lvert \xt{p}_2\rvert}{\mathfrak{g}(\xt{p}_1)\mathfrak{g}(\xt{p}_2)}+\frac{\lvert \xt{p}_2\rvert^2}{\mathfrak{g}^2(\xt{p}_2)}\Big)\sup_{\xt{x},\xt{k}}\|D^2\lambda(\xt{x},\xt{k})\|\Big)\\
&\hspace{0.5cm}\times\Big(\frac{\lvert \xt{p}_2\rvert\lvert \xt{p}_1\rvert^2}{\mathfrak{g}(\xt{p}_2)\mathfrak{g}^2(\xt{p}_1)}\sup_{\xt{x},\xt{k}}\lvert \nabla_{\xt{x}}\lambda(\xt{x},\xt{k})\rvert+\Big(\frac{\lvert \xt{p}_2\rvert\lvert \xt{p}_1\rvert}{\mathfrak{g}(\xt{p}_2)\mathfrak{g}(\xt{p}_1)}+\frac{\lvert \xt{p}_1\rvert^2}{\mathfrak{g}^2(\xt{p}_1)}\Big)\sup_{\xt{x},\xt{k}}\|D^2\lambda(\xt{x},\xt{k})\|\Big)\Big]<+\infty,
\end{split}\]
and
\[
\sup_{\e,t}\E[\|\tilde{H}_{2,\e}(t)\|^2_{L^{\mathbb{R}^{2d}}}]\leq C \iint d\xt{p}_1 d\xt{p}_2\widehat{R}_0(\xt{p}_1)\widehat{R}_0(\xt{p}_2)\frac{\lvert \xt{p}_1\rvert^2\lvert \xt{p}_2\rvert^2}{\mathfrak{g}^2(\xt{p}_1)\mathfrak{g}^2(\xt{p}_2)}\sup_{\xt{x},\xt{k}}\lvert \nabla_{\xt{k}}\lambda(\xt{x},\xt{k})\rvert^4.
\]
That concludes the proof of Lemma \ref{bound6}.
$\square$
\end{proof}
Consequently, after a long but straightforward computation, we get
\begin{equation}\label{A2}\begin{split}
\mathcal{A}^\e(f^\e _0 +f^\e _1+f^\e_2)(t)=&\partial_v f(W_{\e,\lambda}(t))\big[W_{\e,\lambda_1}(t)+\big<W_\e(t),G_{1,\e}\lambda\big>_{L^2(\mathbb{R}^{2d})}\big]\\
&+\partial^2_v f(W_{\e,\lambda}(t))\big<W_\e(t)\otimes W_\e(t),G_{2,\e}\lambda \big>_{L^2(\mathbb{R}^{4d})}\\
&+\mathcal{O}(\e^{(1+\ga)/2}),
\end{split}\end{equation}
where
\[\begin{split}
G_{1,\e}\lambda(\xt{x},\xt{k})=&-\frac{1}{(2\pi)^d}\int d\xt{p}\widehat{R}_0(\xt{p})\Big[\frac{1}{\mathfrak{g}(\xt{\xt{p}})-i\e^\ga\big(\xt{k}-\frac{\xt{p}}{2}\big)\cdot\xt{p}}\Big(\lambda(\xt{x},\xt{k})-\lambda(\xt{x},\xt{k}-\xt{p})\Big)\\
&\hspace{2cm}-\frac{1}{\mathfrak{g}(\xt{\xt{p}})-i\e^\ga\big(\xt{k}+\frac{\xt{p}}{2}\big)\cdot\xt{p}}\Big(\lambda(\xt{x},\xt{k}+\xt{p})-\lambda(\xt{x},\xt{k})\Big)\Big],
\end{split}\]

\[\begin{split}
G_{2,\e}\lambda(\xt{x}_1,\xt{k}_1,\xt{x}_2,\xt{k}_2)=&-\frac{1}{(2\pi)^d}\int d\xt{p} \frac{\mathfrak{g}(\xt{p})\widehat{R}_0(\xt{p})e^{i\xt{p}\cdot(\xt{x}_1-\xt{x}_2)/\e}}{(\mathfrak{g}(\xt{p})-i\e^\ga\xt{k}_1\cdot\xt{p})(2\mathfrak{g}(\xt{p})-i\e^\ga(\xt{k}_1-\xt{k}_2)\cdot\xt{p} )}\\
&\times \Big(\lambda\big(\xt{x}_1,\xt{k}_1-\frac{\xt{p}}{2}\big)-\lambda\big(\xt{x}_1,\xt{k}_1+\frac{\xt{p}}{2}\big)\Big)\\
&\times\Big(\lambda\big(\xt{x}_2,\xt{k}_2+\frac{\xt{p}}{2}\big)-\lambda\big(\xt{x}_2,\xt{k}_2-\frac{\xt{p}}{2}\big)\Big),
\end{split}\]
and
\[\begin{split}
G_{3,\e}\lambda(\xt{x}_1,\xt{k}_1,\xt{x}_2,\xt{k}_2)=&-\frac{1}{(2\pi)^d}\int d\xt{p}\frac{\mathfrak{g}(\xt{p})\widehat{R}_0(\xt{p})e^{i\xt{p}\cdot(\xt{x}_1-\xt{x}_2)/\e}}{(\mathfrak{g}(\xt{p})-i\e^\ga\xt{k}_1\cdot\xt{p})(2\mathfrak{g}(\xt{p})-i\e^\ga(\xt{k}_1-\xt{k}_2)\cdot\xt{p} )}\\
&\times \Big(\lambda\big(\xt{x}_1,\xt{k}_1-\frac{\xt{p}}{2}\big)-\lambda\big(\xt{x}_1,\xt{k}_1+\frac{\xt{p}}{2}\big)\Big)\\
&\times\overline{\Big(\lambda\big(\xt{x}_2,\xt{k}_2+\frac{\xt{p}}{2}\big)-\lambda\big(\xt{x}_2,\xt{k}_2-\frac{\xt{p}}{2}\big)\Big)}.
\end{split}\]

\begin{lem}\label{bound7}
\[\lim_{\e} \|  G_{2,\e}\lambda\|^2 _{L^2(\mathbb{R}^{4d})}+ \|  G_{3,\e}\lambda\|^2 _{L^2(\mathbb{R}^{4d})}=0.\]
\end{lem}
\begin{proof}[of Lemma \ref{bound7}]
We will just show the result for $G_{2,\e}$, the proof is exactly the same for $G_{3,\e}$.
\[\|G_{2,\e}\lambda\|^2_{L^2(\mathbb{R}^{4d})}=\int d\xt{x}_1 d\xt{x}_2 d\xt{k}_1 d\xt{k}_2\Big\vert \int d\xt{p} \Psi_\e(\xt{x}_1,\xt{x}_2,\xt{k}_1,\xt{k}_2,\xt{p}) e^{i \xt{p}\cdot (\xt{x}_1-\xt{x}_2)/\e}\Big\vert^2
 \]
where 
\[\begin{split}
\Psi_\e(\xt{x}_1,\xt{x}_2,\xt{k}_1,\xt{k}_2,\xt{p})=&\frac{\mathfrak{g}(\xt{p})\widehat{R}_0(\xt{p})}{(\mathfrak{g}(\xt{p})-i\e^\ga\xt{k}_1\cdot \xt{p})(2\mathfrak{g}(\xt{p})-i\e^\ga(\xt{k}_1-\xt{k}_2)\cdot \xt{p})} \\
&\times\xt{p}\cdot \int_{-1/2}^{1/2} du_1\nabla_{\xt{k}}\lambda (\xt{x}_1,\xt{k}_1+u_1 \xt{p})\times \xt{p}\cdot \int_{-1/2}^{1/2}du_2 \nabla_{\xt{k}}\lambda (\xt{x}_2, \xt{k}_2+u_2 \xt{p}).
\end{split}\]
Let us recall that $\widehat{R}_0$ and $\lambda$ have their supports included in a compact set. 
Then,
\[\sup_{\xt{x}_1,\xt{k}_1,\xt{x}_2,\xt{k}_2}\lvert\Psi_\e(\xt{x}_1,\xt{x}_2,\xt{k}_1,\xt{k}_2,\xt{p})- \Psi_0(\xt{x}_1,\xt{x}_2,\xt{k}_1,\xt{k}_2,\xt{p})\rvert\leq \e^\ga C\frac{\widehat{R}_0(\xt{p})\lvert p\rvert^2}{\mathfrak{g}(\xt{p})}\]
with
\[\Psi_0(\xt{x}_1,\xt{x}_2,\xt{k}_1,\xt{k}_2,\xt{p})=\frac{\widehat{R}_0(\xt{p})}{2\mathfrak{g}(\xt{p})}\xt{p}\cdot \int_{-1/2}^{1/2} du_1\nabla_{\xt{k}}\lambda (\xt{x}_1,\xt{k}_1+u_1 \xt{p})\times \xt{p}\cdot \int_{-1/2}^{1/2}du_2 \nabla_{\xt{k}}\lambda (\xt{x}_2, \xt{k}_2+u_2 \xt{p}).\]
Moreover, for almost every $(\xt{x}_1,\xt{x}_2,\xt{k}_1,\xt{k}_2)$
\[ \lim_\e \int d\xt{p} \Psi(\xt{x}_1,\xt{x}_2,\xt{k}_1,\xt{k}_2,\xt{p}) e^{i \xt{p}\cdot (\xt{x}_1-\xt{x}_2)/\e}=0\]
by the Riemann--Lebesgue lemma, since $\widehat{R}(\xt{p})\frac{\lvert \xt{p}\rvert^2 }{\mathfrak{g}(\xt{p})}\in L^1(\mathbb{R}^d)$.
Consequently, the result of Lemma \ref{bound7} follows from the dominated convergence theorem.$\square$
\end{proof}

\begin{lem}\label{bound8}
We have
\[\lim_{\e} \|  (G_{1,\e}-G_1)\lambda\|^2 _{L^2(\mathbb{R}^{2d})}=0,\]
with 
\[\begin{split}
G_{1}\lambda(\xt{x},\xt{k})&=\frac{1}{(2\pi)^d}\int d\xt{p}\frac{\widehat{R}_0(\xt{p})}{\mathfrak{g}(\xt{p})}\Big(\lambda(\xt{x},\xt{k}+\xt{p})+\lambda(\xt{x},\xt{k}-\xt{p})-2\lambda(\xt{x},\xt{k})\Big)\\
&=\frac{1}{(2\pi)^d}\int d\xt{p}\frac{2\widehat{R}_0(\xt{p})}{\mathfrak{g}(\xt{p})}\Big(\lambda(\xt{x},\xt{k}+\xt{p})-\lambda(\xt{x},\xt{k})\Big).
\end{split}\]
\end{lem}
\begin{proof}[of Lemma \ref{bound8}]
This Lemma is a direct consequence of $\widehat{R}(\xt{p})\frac{\lvert \xt{p}\rvert^2 }{\mathfrak{g}^2(\xt{p})}\in L^1(\mathbb{R}^d)$.
$\square$
\end{proof}

Let $f^\e (t)=f^\e _0 (t) + f^\e _1 (t)+ f^\e _2 (t)$. By Theorem \ref{martingale}, $\big(M^\e _{f^\e} (t) \big)_{t\geq 0}$ is an $\left( \mathcal{F}^\e _t \right)$-martingale. That is, for every bounded continuous function $\Phi$, every sequence $0< s_1<\cdots <s_n \leq s <t$, and every family $(\mu_j)_{j\in \{1,\dots,n\}}\in L^2(\mathbb{R}^d)^n$, we have
\[\mathbb{E}\Big[ \Phi\big(W _{\e,\mu_j}(s_j),1\leq j \leq n\big)\Big( f^\e (t) - f^\e (s)-\int _s ^t  \mathcal{A}^\e f^\e (u)du \Big) \Big]=0 .\]
Using \eqref{A2}, Lemma \ref{bound2}, Lemma \ref{bound6}, Lemma \ref{bound7}, and Lemma \ref{bound8} we obtain that
\[\begin{split}
M_{f,\lambda}(t)=&f (W_\lambda(t)) - f(W_\lambda(0))\\
&-\int _0^t  \partial_v f(W_{\lambda}(u))\big[W_{\lambda_1}(u)+\big<W(u),G_{1}\lambda\big>_{L^2(\mathbb{R}^{2d})}\big]
\end{split}\]
is a martingale, and where $\lambda_1$ is defined by \eqref{lambda1}. More particularly, Let us consider $f$ be a smooth function so that $f(v)=v$, $\forall v$ such that $\lvert v\rvert\leq r\|\lambda\|_{L^2(\mathbb{R}^{2d})}$. Then,
\[\begin{split}
M_{\lambda}(t)=&W_\lambda(t) - W_\lambda(0)\\
&-\int _0 ^t  du \big[W_{\lambda_1}(u)+\big<W(u),G_{1}\lambda\big>_{L^2(\mathbb{R}^{2d})}\big]
\end{split}\]
is a martingale with a quadratic variation equal to $0$. Consequently, $M_{\lambda}=0$, that concludes the proof a Proposition \ref{identification}. $\blacksquare$
\end{proof}

\subsection{Proof of the weak uniqueness of the radiative transfer equation \eqref{radtranseq}}

In this section we show the uniqueness of weak solutions of the radiative transfer equation \eqref{radtranseq}. To this end will construct a good test function using an approximation $\mathcal{L}_N$ defined 
\[\mathcal{L}_N\varphi(\xt{k})=\int_{\lvert\xt{p}\rvert>1/N } d\xt{p}\sigma(\xt{p})\big(\varphi(\xt{p}+\xt{k})-\varphi(\xt{k})\big).\]
of $\mathcal{L}$ defined by
\[\mathcal{L}\varphi(\xt{k})=\int d\xt{p}\sigma(\xt{p})\big(\varphi(\xt{p}+\xt{k})-\varphi(\xt{k})\big),\]
$\forall \varphi\in\mathcal{C}^{\infty}_0(\mathbb{R}^d)$. $\mathcal{L}_N$ is an approximation of $\mathcal{L}$ with a finite scattering coefficient $\int_{\lvert\xt{p}\rvert>1/N } d\xt{p}\sigma(\xt{p})$.  As we will see $\mathcal{L}_N$ is a well suited approximation of $\mathcal{L}$ to construct a test function which enables us to show the weak uniqueness of the transfer equation.

\begin{prop}\label{testfunction}
Let $\lambda\in\mathcal{C}^{\infty}_0(\mathbb{R}^{2d})$. $\forall N\geq1$, there exists a unique solution $W_N$ of the radiative transfer equation 
\begin{equation}\label{approxtransport}
\partial_t W_N(t,\xt{x},\xt{k})=\xt{k}\cdot\nabla_{\xt{x}} W_N(t,\xt{x},\xt{k})+\mathcal{L}_NW_N(t,\xt{x},\xt{k})\end{equation}
with $W_N(0,\xt{x},\xt{k})=\lambda(\xt{x},\xt{k})$, such that $W_N\in \mathcal{C}^1([0,+\infty),L^2(\mathbb{R}^{2d}))$. $W_N$ is given by the series expansion   
\begin{equation}\label{seriesexp}\begin{split}W_N(t,\xt{x},\xt{k})=&e^{-\Sigma_N t}\lambda(\xt{x}+t\xt{k},\xt{k})\\
&+e^{-\Sigma_N t}\sum_{n\geq 1}\int_{D_n(t)}d\xt{s}^{(n)} \int d\xt{p}^{(n)} \prod_{j=1}^n \sigma(\xt{p}_j)\lambda\Big(\xt{x}+t\xt{k}+\sum_{j=1}^n s_j\xt{p}_j,\xt{k}+\sum_{j=1}^n\xt{p}_j\Big).\end{split}\end{equation}
Here, $D_n(t)=\{\xt{s}^{(n)}=(s_1,\dots,s_n):\,\,0\leq s_n\leq\dots\leq s_1\leq t\}$, $\xt{p}^{(n)}=(\xt{p}_1,\dots,\xt{p}_n)$, and
\[\Sigma_N=\int_{\lvert\xt{p}\rvert>1/N } d\xt{p}\sigma(\xt{p}).\]
 \end{prop}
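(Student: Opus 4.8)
The plan is to treat \eqref{approxtransport} as a free transport equation perturbed by a \emph{bounded} operator, the boundedness being precisely what the cutoff $\lvert\xt{p}\rvert>1/N$ buys us. First I would split $\mathcal{L}_N=K_N-\Sigma_N I$, where
\[
K_N\varphi(\xt{x},\xt{k})=\int_{\lvert\xt{p}\rvert>1/N}d\xt{p}\,\sigma(\xt{p})\varphi(\xt{x},\xt{k}+\xt{p})
\]
is convolution in the $\xt{k}$ variable against $\sigma\,\1_{\{\lvert\xt{p}\rvert>1/N\}}$. Since $\Sigma_N=\int_{\lvert\xt{p}\rvert>1/N}d\xt{p}\,\sigma(\xt{p})<+\infty$, Young's inequality gives $\|K_N\|_{L^2(\mathbb{R}^{2d})\to L^2(\mathbb{R}^{2d})}\leq\Sigma_N$, so $K_N$ is bounded. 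The free transport part is generated by the strongly continuous group of isometries $T(t)\varphi(\xt{x},\xt{k})=\varphi(\xt{x}+t\xt{k},\xt{k})$, which solves $\partial_t u=\xt{k}\cdot\nabla_{\xt{x}}u$ and preserves the $L^2$ norm because the shear $(\xt{x},\xt{k})\mapsto(\xt{x}+t\xt{k},\xt{k})$ has unit Jacobian. Absorbing the scalar term by setting $\widetilde{W}_N=e^{\Sigma_N t}W_N$ reduces \eqref{approxtransport} to $\partial_t\widetilde{W}_N=\xt{k}\cdot\nabla_{\xt{x}}\widetilde{W}_N+K_N\widetilde{W}_N$ with $\widetilde{W}_N(0)=\lambda$.

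Next I would recast this as the Duhamel integral equation
\[
\widetilde{W}_N(t)=T(t)\lambda+\int_0^t T(t-s)K_N\widetilde{W}_N(s)\,ds,
\]
and solve it by Picard iteration. Because $T(t)$ is an isometry and $\|K_N\|\leq\Sigma_N$, the $n$-th Neumann iterate is bounded in $L^2$ by $(\Sigma_N t)^n/n!\,\|\lambda\|_{L^2(\mathbb{R}^{2d})}$, so the series
\[
\widetilde{W}_N(t)=\sum_{n\geq0}\int_{D_n(t)}T(t-s_1)K_N T(s_1-s_2)K_N\cdots K_N T(s_n)\lambda\,d\xt{s}^{(n)}
\]
converges absolutely and uniformly on compact time intervals, giving a mild solution in $\mathcal{C}([0,+\infty),L^2(\mathbb{R}^{2d}))$ with $\|\widetilde{W}_N(t)\|_{L^2}\leq e^{\Sigma_N t}\|\lambda\|_{L^2}$; uniqueness in this class is immediate from the contraction estimate. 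A direct computation of the alternating compositions, using $T(\tau)$ to shift the spatial argument and $K_N$ to introduce one new momentum variable $\xt{p}_j$, collapses the telescoping transport shifts into $\xt{x}+t\xt{k}+\sum_j s_j\xt{p}_j$ and the momentum into $\xt{k}+\sum_j\xt{p}_j$, matching term by term the stated expansion \eqref{seriesexp} after reinstating the factor $e^{-\Sigma_N t}$.

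It remains to upgrade the mild solution to a genuine $\mathcal{C}^1([0,+\infty),L^2(\mathbb{R}^{2d}))$ solution of \eqref{approxtransport}. Since $\lambda\in\mathcal{C}^\infty_0(\mathbb{R}^{2d})$, also $\xt{k}\cdot\nabla_{\xt{x}}\lambda\in\mathcal{C}^\infty_0(\mathbb{R}^{2d})$, so $\lambda$ lies in the domain of the generator; I would differentiate the Duhamel equation in $t$, using $\partial_t T(t)\psi=T(t)(\xt{k}\cdot\nabla_{\xt{x}}\psi)$ for $\psi\in\mathcal{C}^\infty_0(\mathbb{R}^{2d})$ together with the boundedness of $K_N$, and check that the resulting derivative series converges by the same factorial bound, so that $\widetilde{W}_N$ solves the equation strongly. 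Uniqueness among $\mathcal{C}^1$ solutions then follows by the standard argument of differentiating $s\mapsto T(t-s)\widetilde{W}_N(s)$ to recover the Duhamel formula. The main obstacle I anticipate is this regularity bootstrapping: controlling the unbounded transport generator $\xt{k}\cdot\nabla_{\xt{x}}$ acting on the iterated terms, where the growth of the spatial support in $t$ and the factor $\xt{k}$ must be handled. This is exactly where the compact support of $\lambda$ and the finiteness of $\Sigma_N$, the reason for the $\lvert\xt{p}\rvert>1/N$ cutoff, are essential.
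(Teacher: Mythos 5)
Your proposal is correct and follows essentially the same route as the paper: both write the equation in Duhamel form along the free transport characteristics with the damping $e^{-\Sigma_N t}$ factored out, generate the series \eqref{seriesexp} by Picard iteration using $\|K_N\|_{L^2\to L^2}\leq\Sigma_N$, and then justify the $\mathcal{C}^1$ regularity by summing the bounds on $\xt{k}\cdot\nabla_{\xt{x}}W_N^n$ and $\int d\xt{p}\,\sigma(\xt{p})W_N^n(\cdot,\cdot+\xt{p})$ term by term, exactly where the compact supports of $\lambda$ and $\widehat{R}_0$ and the finiteness of $\Sigma_N$ enter. The only difference is presentational: you phrase it in abstract semigroup language, while the paper works directly with the explicit flow.
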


Let $W$ be a weak solution of the radiative transfer equation  \eqref{radtranseq} with a null initial condition. Using the test function $W_N$ given by Proposition \ref{testfunction}, let us consider $\tilde{W}_N(t,\xt{x},\xt{k})=W_N(T-t,\xt{x},\xt{k})$, for $T>0$. Then, $\tilde{W}_N$ satisfies 
\[\partial_t \tilde{W}_N(t,\xt{x},\xt{k})+\xt{k}\cdot\nabla_{\xt{x}} \tilde{W}_N(t,\xt{x},\xt{k})+\mathcal{L}_N\tilde{W}_N(t,\xt{x},\xt{k})=0\]
with $\tilde{W}_N(T,\xt{x},\xt{k})=\lambda(\xt{x},\xt{k})$. Consequently, we have following lemma.
\begin{lem}\label{extension} $\forall t\in(0,T)$, we have
\[\begin{split}
\frac{d}{dt}\big<W(t),\tilde{W}_N(t)\big>_{L^2(\mathbb{R}^{2d})}&=\big<W(t),\partial_t \tilde{W}_N(t) +\xt{k}\cdot\nabla_{\xt{x}}\tilde{W}_N(t)+\mathcal{L}\tilde{W}_N(t)\big>_{L^2(\mathbb{R}^{2d})}\\
&=\big<W(t),(\mathcal{L}-\mathcal{L}_N)\tilde{W}_N(t)\big>_{L^2(\mathbb{R}^{2d})}.\end{split}\]
\end{lem}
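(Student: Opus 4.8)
The plan is to prove the identity by upgrading the weak formulation of \eqref{radtranseq}—valid a priori only for the time-independent test functions $\lambda\in\mathcal{C}^\infty_0(\mathbb{R}^{2d})$ appearing in Theorem \ref{thasymptotic}—to the time-dependent test function $\psi(t)=\tilde{W}_N(t)$. Since $W$ is merely a weak solution, only weakly continuous in time and not differentiable, I would not differentiate $W$ directly. Instead I would first establish an integral identity for the scalar map $t\mapsto\langle W(t),\psi(t)\rangle_{L^2(\mathbb{R}^{2d})}$, then differentiate it by the fundamental theorem of calculus, and finally insert the equation solved by $\tilde{W}_N$.

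First I would verify that $\tilde{W}_N(t)=W_N(T-t)$ is an admissible test function. By Proposition \ref{testfunction}, $W_N\in\mathcal{C}^1([0,+\infty),L^2(\mathbb{R}^{2d}))$, so $s\mapsto\tilde{W}_N(s)$ and $s\mapsto\partial_s\tilde{W}_N(s)$ are continuous into $L^2(\mathbb{R}^{2d})$. From the series \eqref{seriesexp} each term is smooth, and the factorial weights $e^{-\Sigma_N t}(\Sigma_N t)^n/n!$ produce faster-than-polynomial decay; since $\widehat{R}_0$, hence $\sigma$, is compactly supported, the $n$-th term has $\xt{k}$-support of radius $O(n)$, so $\tilde{W}_N(t)$ is a smooth function with rapid decay. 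Consequently $\xt{k}\cdot\nabla_{\xt{x}}\tilde{W}_N(t)\in L^2(\mathbb{R}^{2d})$ and, despite $\int d\xt{p}\,\sigma(\xt{p})=+\infty$, also $\mathcal{L}\tilde{W}_N(t)\in L^2(\mathbb{R}^{2d})$: the second-order cancellation in $\tilde{W}_N(\xt{k}+\xt{p})-\tilde{W}_N(\xt{k})$ together with $\int d\xt{p}\,\sigma(\xt{p})|\xt{p}|^2<+\infty$ (which follows from \eqref{longrange}) tames the singularity of $\sigma$ at $\xt{p}=0$. Hence all $L^2$ pairings in the statement are finite because $\|W(t)\|_{L^2(\mathbb{R}^{2d})}\le r$, and $\tilde{W}_N(s)$ can be approximated by elements of $\mathcal{C}^\infty_0(\mathbb{R}^{2d})$ in the graph norm of $\xt{k}\cdot\nabla_{\xt{x}}+\mathcal{L}$.

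Next I would derive, for $\psi=\tilde{W}_N$, the time-dependent weak identity
\[\langle W(t),\psi(t)\rangle_{L^2(\mathbb{R}^{2d})}-\langle W(0),\psi(0)\rangle_{L^2(\mathbb{R}^{2d})}=\int_0^t\langle W(s),\partial_s\psi(s)+\xt{k}\cdot\nabla_{\xt{x}}\psi(s)+\mathcal{L}\psi(s)\rangle_{L^2(\mathbb{R}^{2d})}\,ds.\]
The route is to partition $[0,t]$ as $0=t_0<\dots<t_n=t$, telescope the increments $\langle W(t_{i+1}),\psi(t_{i+1})\rangle-\langle W(t_i),\psi(t_i)\rangle$, and split each into $\langle W(t_{i+1})-W(t_i),\psi(t_i)\rangle+\langle W(t_{i+1}),\psi(t_{i+1})-\psi(t_i)\rangle$. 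The first piece is rewritten via the fixed-time weak formulation of \eqref{radtranseq} (applied to $\psi(t_i)$, after the approximation above) as $\int_{t_i}^{t_{i+1}}\langle W(s),\xt{k}\cdot\nabla_{\xt{x}}\psi(t_i)+\mathcal{L}\psi(t_i)\rangle\,ds$, while the second is $\langle W(t_{i+1}),\partial_s\psi(t_i)\rangle(t_{i+1}-t_i)+o(t_{i+1}-t_i)$ by the $\mathcal{C}^1$ regularity of $\psi$ in $L^2$. Letting the mesh tend to $0$ and using the weak continuity of $s\mapsto W(s)$ together with the uniform bound $\|W(s)\|_{L^2(\mathbb{R}^{2d})}\le r$, the Riemann sums converge to the claimed integral. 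Because the integrand $s\mapsto\langle W(s),\partial_s\psi(s)+\xt{k}\cdot\nabla_{\xt{x}}\psi(s)+\mathcal{L}\psi(s)\rangle$ is continuous in $s$, differentiating this identity yields the first equality of the lemma at every $t\in(0,T)$.

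Finally, substituting the equation satisfied by $\tilde{W}_N$, namely $\partial_t\tilde{W}_N(t)+\xt{k}\cdot\nabla_{\xt{x}}\tilde{W}_N(t)=-\mathcal{L}_N\tilde{W}_N(t)$, into the right-hand side gives $\langle W(t),\mathcal{L}\tilde{W}_N(t)-\mathcal{L}_N\tilde{W}_N(t)\rangle_{L^2(\mathbb{R}^{2d})}=\langle W(t),(\mathcal{L}-\mathcal{L}_N)\tilde{W}_N(t)\rangle_{L^2(\mathbb{R}^{2d})}$, the second equality. The main obstacle is the passage to time-dependent test functions: one must justify the Riemann-sum limit using only the weak continuity of $W$ in time, and simultaneously ensure that $\mathcal{L}$ and $\xt{k}\cdot\nabla_{\xt{x}}$ act continuously on the family $\tilde{W}_N(s)$ uniformly in $s\in[0,T]$, so that the integrand is genuinely continuous. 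The compact support of $\sigma$ and the moment bounds \eqref{longrange} are precisely what make this uniform control possible.
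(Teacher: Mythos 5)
Your argument is correct and follows essentially the same route as the paper: one extends the weak formulation of \eqref{radtranseq} to time-dependent test functions (the paper does this term by term on the series \eqref{seriesexp}, using the same summability estimates for $\partial_t\tilde W^n_N$, $\xt{k}\cdot\nabla_\xt{x}\tilde W^n_N$ and $\mathcal{L}W^n_N$ that underlie your graph-norm approximation and Riemann-sum passage), and then substitutes the equation satisfied by $\tilde W_N$. The only cosmetic difference is that the paper controls $\mathcal{L}\tilde W_N$ through the first moment $\int d\xt{p}\,\sigma(\xt{p})\lvert\xt{p}\rvert<+\infty$ via $\mathcal{L}\varphi=\int d\xt{p}\,\sigma(\xt{p})\int_0^1 du\,\xt{p}\cdot\nabla_{\xt{k}}\varphi(\cdot+u\xt{p})$ rather than your second-order cancellation with $\int d\xt{p}\,\sigma(\xt{p})\lvert\xt{p}\rvert^2$, but both bounds follow from \eqref{longrange} and the compact support of $\widehat{R}_0$.
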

As a result,
\[\big<W(T),\lambda\big>_{L^2(\mathbb{R}^{2d})}=\big<W(T),\tilde{W}_N(T)\big>_{L^2(\mathbb{R}^{2d})}=\int_0^Tds\big<W(s),(\mathcal{L}-\mathcal{L}_N)\tilde{W}_N(s)\big>_{L^2(\mathbb{R}^{2d})}.\]
Finally, it remains to show the following lemma.
\begin{lem}\label{approxgene}
\[\sup_{t\in[0,T]}\|(\mathcal{L}-\mathcal{L}_N)\tilde{W}_N(t)\|_{L^2(\mathbb{R}^{2d})}\leq C_{\lambda}\int_{\lvert \xt{p}\rvert <1/N}\frac{\widehat{R}_0(\xt{p})\lvert \xt{p}\rvert }{\mathfrak{g}(\xt{p})}. \]
\end{lem}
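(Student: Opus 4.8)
The plan is to exploit that the difference operator $\mathcal{L}-\mathcal{L}_N$ only integrates the increments of $\tilde{W}_N$ against $\sigma$ over the small-momentum region $\{|\xt{p}|\le 1/N\}$, and to convert each increment into a first-order derivative so that the singularity of $\sigma$ at the origin is compensated by the extra factor $|\xt{p}|$. Concretely, for fixed $t$ one has
\[(\mathcal{L}-\mathcal{L}_N)\tilde{W}_N(t,\xt{x},\xt{k})=\int_{|\xt{p}|\le 1/N}d\xt{p}\,\sigma(\xt{p})\big(\tilde{W}_N(t,\xt{x},\xt{k}+\xt{p})-\tilde{W}_N(t,\xt{x},\xt{k})\big),\]
and writing $\tilde{W}_N(t,\xt{x},\xt{k}+\xt{p})-\tilde{W}_N(t,\xt{x},\xt{k})=\int_0^1 du\,\xt{p}\cdot\nabla_\xt{k}\tilde{W}_N(t,\xt{x},\xt{k}+u\xt{p})$ turns the estimate into a bound for a superposition of translated gradients. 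These differentiations are legitimate because the series \eqref{seriesexp} exhibits $W_N$ as a locally uniformly convergent superposition of smooth translates of $\lambda$.

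The second step is to take the $L^2(\mathbb{R}^{2d})$ norm and push it inside the $\xt{p}$- and $u$-integrals by Minkowski's integral inequality. Since translation in the $\xt{k}$ variable preserves the $L^2$ norm, each translated term satisfies $\|\xt{p}\cdot\nabla_\xt{k}\tilde{W}_N(t,\cdot,\cdot+u\xt{p})\|_{L^2(\mathbb{R}^{2d})}\le |\xt{p}|\,\|\nabla_\xt{k}\tilde{W}_N(t)\|_{L^2(\mathbb{R}^{2d})}$, independently of $u$. This yields
\[\|(\mathcal{L}-\mathcal{L}_N)\tilde{W}_N(t)\|_{L^2(\mathbb{R}^{2d})}\le \|\nabla_\xt{k}\tilde{W}_N(t)\|_{L^2(\mathbb{R}^{2d})}\int_{|\xt{p}|\le 1/N}d\xt{p}\,\sigma(\xt{p})|\xt{p}|,\]
and recalling $\sigma(\xt{p})=2\widehat{R}_0(\xt{p})/((2\pi)^d\mathfrak{g}(\xt{p}))$ reproduces exactly the claimed right-hand side, with constant $C_\lambda=\tfrac{2}{(2\pi)^d}\sup_{t\in[0,T]}\|\nabla_\xt{k}\tilde{W}_N(t)\|_{L^2(\mathbb{R}^{2d})}$; note the integral is finite by \eqref{longrange} and vanishes as $N\to\infty$, which is precisely what the uniqueness argument downstream needs.

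The main obstacle is to see that the prefactor $\sup_{t\in[0,T]}\|\nabla_\xt{k}\tilde{W}_N(t)\|_{L^2}$ is bounded uniformly in $N$, so that $C_\lambda$ depends only on $\lambda$ and $T$. Since $\tilde{W}_N(t)=W_N(T-t)$, it suffices to control $\sup_{s\in[0,T]}\|\nabla_\xt{k}W_N(s)\|_{L^2}$. I would differentiate \eqref{approxtransport} in $k_i$: because $\mathcal{L}_N$ is a convolution in $\xt{k}$ it commutes with $\partial_{k_i}$, while $\partial_{k_i}(\xt{k}\cdot\nabla_\xt{x})=\partial_{x_i}+\xt{k}\cdot\nabla_\xt{x}\partial_{k_i}$ produces a source term, giving
\[\partial_s\,\partial_{k_i}W_N=\xt{k}\cdot\nabla_\xt{x}\,\partial_{k_i}W_N+\mathcal{L}_N\,\partial_{k_i}W_N+\partial_{x_i}W_N.\]
An energy estimate then closes the bound: the transport operator $\xt{k}\cdot\nabla_\xt{x}$ is skew-adjoint on $L^2(\mathbb{R}^{2d})$ and $\mathcal{L}_N$ is dissipative, i.e. $\langle\mathcal{L}_N\psi,\psi\rangle\le 0$ by symmetry of $\sigma$, so $\frac{d}{ds}\|\partial_{k_i}W_N\|_{L^2}\le\|\partial_{x_i}W_N\|_{L^2}$; and since $\partial_{x_i}$ commutes with the entire right-hand side of \eqref{approxtransport}, the same computation gives $\|\partial_{x_i}W_N(s)\|_{L^2}\le\|\partial_{x_i}\lambda\|_{L^2}$ for all $s$. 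Integrating in $s\in[0,T]$ leaves $\sup_{s\in[0,T]}\|\nabla_\xt{k}W_N(s)\|_{L^2}\le\|\nabla_\xt{k}\lambda\|_{L^2}+T\|\nabla_\xt{x}\lambda\|_{L^2}$, uniformly in $N$, which completes the proof.
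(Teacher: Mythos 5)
Your proof is correct, and its first half is exactly the paper's argument: the identity
\[(\mathcal{L}-\mathcal{L}_N)\tilde{W}_N(t,\xt{x},\xt{k})=\int_{\lvert \xt{p}\rvert <1/N }d\xt{p}\,\sigma(\xt{p})\int_0^1du\, \xt{p}\cdot \nabla_{\xt{k}}\tilde{W}_N(t,\xt{x},\xt{k}+u\xt{p}),\]
followed by Minkowski and translation invariance, is precisely how the paper extracts the factor $\widehat{R}_0(\xt{p})\lvert\xt{p}\rvert/\mathfrak{g}(\xt{p})$, which is integrable near the origin by \eqref{longrange}. Where you diverge is in how you control the prefactor $\sup_{t\in[0,T]}\|\nabla_{\xt{k}}\tilde{W}_N(t)\|_{L^2}$ uniformly in $N$. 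The paper reads this bound off the Duhamel series \eqref{seriesexp}: differentiating each term of the expansion in $\xt{k}$ produces $(T-t)\nabla_{\xt{x}}\lambda+\nabla_{\xt{k}}\lambda$ evaluated at measure-preserving shifts of the arguments, and summing against the Poissonian weights $e^{-\Sigma_N t}$ gives $T\|\nabla_{\xt{x}}\lambda\|_{L^2}+\|\nabla_{\xt{k}}\lambda\|_{L^2}$ (this is the content of \eqref{sumgrad} and the analogous estimate in the proof of Lemma \ref{extension}). You instead differentiate the equation \eqref{approxtransport} in $k_i$, note the commutator source $\partial_{x_i}W_N$, and run an $L^2$ energy estimate using the skew-adjointness of $\xt{k}\cdot\nabla_{\xt{x}}$ and the dissipativity $\langle\mathcal{L}_N\psi,\psi\rangle_{L^2}\leq 0$, which holds since $\sigma$ is even and $\Sigma_N<\infty$ makes $\mathcal{L}_N$ a bounded operator. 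Both routes land on the identical constant $T\|\nabla_{\xt{x}}\lambda\|_{L^2}+\|\nabla_{\xt{k}}\lambda\|_{L^2}$. Your energy-estimate route is more structural and would survive generalizations where an explicit series solution is unavailable, at the cost of needing the a priori regularity $W_N\in\mathcal{C}^1([0,T],H^1)$ to legitimize the differentiated equation; the paper's series route gets that regularity for free from the explicit formula, which is why it is the more economical choice here. You do gesture at the series to justify differentiability, so the circle closes; just be aware that in a fully detailed write-up that justification is doing real work.
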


This last lemma gives us the weak uniqueness. In fact, we have  $\int d\xt{p} \widehat{R}_0(\xt{p})\lvert \xt{p}\rvert /\mathfrak{g}(\xt{p})<+\infty$, and then,
\[\lim_{N\to+\infty }\int_{\lvert \xt{p}\rvert <1/N}d\xt{p}\frac{\widehat{R}_0(\xt{p})\lvert \xt{p}\rvert }{\mathfrak{g}(\xt{p})}=0.\]
Consequently, $\forall\lambda\in\mathcal{C}^{\infty}_0(\mathbb{R}^{2d})$ and $\forall T>0$, $\big<W(T),\lambda\big>_{L^2(\mathbb{R}^{2d})}=0$.

\begin{proof}[of Propostion \ref{testfunction}]
Let us rewrite \eqref{approxtransport}
using the finite scattering coefficient $\Sigma_N$. Then, We are looking for a solution of 
\[\partial_t W_N(t,\xt{x},\xt{k})=\xt{k}\cdot\nabla_{\xt{x}} W_N(t,\xt{x},\xt{k})-\Sigma_NW_N(t,\xt{x},\xt{k})+\int d\xt{p} \sigma(\xt{p})W_N(t,\xt{x},\xt{k}+\xt{p})\]
This equation can be recast by integrating as follows
\[W_N(t,\xt{x},\xt{k})=e^{-\Sigma_N t}\lambda(\xt{x}+t\xt{k},\xt{k})+\int_0^t ds \int_{\lvert \xt{p}\rvert>1/N} d\xt{p} \sigma(\xt{p})e^{-(t-s)\Sigma_N}W_N(s,\xt{x}+(t-s)\xt{k},\xt{k}+\xt{p}),\]
and then the series expansion \eqref{seriesexp} is obtained by induction. Let us write 
\[W_N(t,\xt{x},\xt{k})=\sum_{n\geq0} W^n_N(t,\xt{x},\xt{k})\]
where $W^n_N$ are the terms of the serie expansion \eqref{seriesexp}. Using changes of variables we show that $\forall t\geq 0$, $W_N(t)\in L^2(\mathbb{R}^d)$ and
\[\sum_{n\geq 0} \sup_{t}\| W^n_N(t)\|_{L^2(\mathbb{R}^{2d})}<+\infty.\]
Then $W_N\in \mathcal{C}^0([0,+\infty),L^2(\mathbb{R}^{2d}))$. Moreover, 
\begin{equation}\label{sumgrad}\sum_{n\geq 0} \sup_{0\leq t\leq T}\| \xt{k}\cdot\nabla_{\xt{x}}W^n_N(t)\|_{L^2(\mathbb{R}^{2d})}\leq \|\xt{k}\cdot\nabla_{\xt{x}}\lambda\|_{L^2(\mathbb{R}^{2d})}+CT\Sigma_N \|\nabla_{\xt{x}}\lambda\|_{L^2(\mathbb{R}^{2d})} <+\infty,\end{equation}
since $\widehat{R}_0$ is assumed to be with compact support, and
\begin{equation}\label{sumgrad2}\sum_{n\geq 0} \sup_{t}\| \int d\xt{p}\sigma(\xt{p}) W^n_N(t,\cdot,\cdot+\xt{p})\|_{L^2(\mathbb{R}^{2d})}\leq\Sigma_N \|\lambda\|_{L^2(\mathbb{R}^{2d})} <+\infty.\end{equation}
$\blacksquare$
\end{proof}

\begin{proof}[of Lemma \ref{extension}]
First of all, let us note that equation \eqref{radtranseq} is valid for all $\lambda\in \mathcal{C}^{\infty}_0(\mathbb{R}^{2d})$. However, since $\sup_{t\geq0}\|W(t)\|_{L^2(\mathbb{R}^d)}<+\infty$ and $W\in C^0([0,+\infty),(\mathcal{B}_r,d_{\mathcal{B}_r}))$, we also have
\[
\big<W(t),\lambda(t)\big>_{L^2(\mathbb{R}^{2d})}=\big<W(0),\lambda(0)\big>_{L^2(\mathbb{R}^{2d})}+\int_0^t ds \big<W(s),\partial_t\lambda(s)+\xt{k}\cdot\nabla_{\xt{x}}\lambda(s)+\mathcal{L}\lambda(s)\big>_{L^2(\mathbb{R}^{2d})},
\] 
$\forall \lambda \in\mathcal{C}^1([0,T],\mathcal{C}^{\infty}_0(\mathbb{R}^{2d}))$, for which $\sup_t\|\lambda(t)\|_{L^2(\mathbb{R}^{2d})}+\sup_t\|\partial_t\lambda(t)\|_{L^2(\mathbb{R}^{2d})}<+\infty$.
Consequently, we have
\[\begin{split}
\big<W(t),\tilde{W}^n_N(t)\big>_{L^2(\mathbb{R}^{2d})}=&\big<W(0),\tilde{W}^n_N(0)\big>_{L^2(\mathbb{R}^{2d})}\\
&+\int_0^t ds \big<W(s),\partial_t\tilde{W}^n_N(s)+\xt{k}\cdot\nabla_{\xt{x}}\tilde{W}^n_N(s)+\mathcal{L}\tilde{W}^n_N(s)\big>_{L^2(\mathbb{R}^{2d})},
\end{split}\] 
where $\tilde{W}^n_N(t)=W^n_N(T-t)$. First, let us note that 
\[ W^n_N(t,\xt{x},\xt{k})=\int_0^t ds \int_{\lvert \xt{p}\rvert >1/N} d\xt{p}\sigma(\xt{p})e^{-(t-s)\Sigma_N}W^{n-1}_N(s,\xt{x}+(t-s)\xt{k},\xt{k}+\xt{p})  ,\]
so that, thanks to \eqref{sumgrad} and \eqref{sumgrad2}, we obtain
\[\sum_{n\geq 0} \sup_{0\leq t\leq T}\| \partial_t \tilde{W}^n_N(t)\|_{L^2(\mathbb{R}^{2d})}+\sum_{n\geq 0} \sup_{0\leq t\leq T}\| \xt{k}\cdot\nabla_{\xt{x}}\tilde{W}^n_N(t)\|_{L^2(\mathbb{R}^{2d})} <+\infty.\]
Moreover,
\[\mathcal{L}W^n_N(t,\xt{x},\xt{k})=\int d\xt{p} \sigma(\xt{p})\int_0^1 du \xt{p}\cdot\nabla_{\xt{k}}W^n_N(t,\xt{x},\xt{k}+u\xt{p}),\]
and then
\[ \sum_{n\geq1} \sup_{0\leq t\leq T} \|\mathcal{L}W^n_N(t)\|_{L^2(\mathbb{R}^{2d})}\leq \int d\xt{p} \frac{\widehat{R}_0(\xt{p})\lvert \xt{p}\rvert }{\mathfrak{g}(\xt{p})} \Big(T\|\nabla_{\xt{x}}\lambda\|_{L^2(\mathbb{R}^{2d})}+\|\nabla_{\xt{k}}\lambda\|_{L^2(\mathbb{R}^{2d})}\Big)<+\infty.\]
Consequently, we obtain the desire result
\[\begin{split}
\big<W(t),\tilde{W}_N(t)\big>_{L^2(\mathbb{R}^{2d})}=&\big<W(0),\tilde{W}_N(0)\big>_{L^2(\mathbb{R}^{2d})}\\
&+\int_0^t ds \big<W(s),\partial_t\tilde{W}_N(s)+\xt{k}\cdot\nabla_{\xt{x}}\tilde{W}_N(s)+\mathcal{L}\tilde{W}_N(t)(s)\big>_{L^2(\mathbb{R}^{2d})}\\
=&\big<W(0),\tilde{W}_N(0)\big>_{L^2(\mathbb{R}^{2d})}\\
&+\int_0^t ds \big<W(s),(\mathcal{L}-\mathcal{L}_N)\tilde{W}_N(s)\big>_{L^2(\mathbb{R}^{2d})}.
\end{split}\] 
$\square$
\end{proof}

\begin{proof}[of Lemma \ref{approxgene}]
We have
\[(\mathcal{L}-\mathcal{L}_N)\tilde{W}_N(t,\xt{x},\xt{k})=\int_{\lvert \xt{p}\rvert <1/N }d\xt{p}\sigma(\xt{p})\int_0^1du \xt{p}\cdot \nabla_{k}\tilde{W}_N(t,\xt{x},\xt{k}+u\xt{p}),\]
and then
\[\sup_{0\leq t\leq T}\|(\mathcal{L}-\mathcal{L}_N)\tilde{W}_N(t)\|_{L^2(\mathbb{R}^{2d})}\leq \int_{\lvert \xt{p}\rvert <1/N }d\xt{p}\frac{\widehat{R}_0(\xt{p})\lvert \xt{p}\rvert}{\mathfrak{g}(\xt{p})}\Big(T\|\nabla_{\xt{x}}\lambda\|_{L^2(\mathbb{R}^{2d})}+\|\nabla_{\xt{k}}\lambda\|_{L^2(\mathbb{R}^{2d})}\Big).\]
$\square$
\end{proof}

\section{Proof of Theorem \ref{regularity}}\label{proofthreg}

First, we let us compute the solution of \eqref{radtranseq}.

\begin{lem}\label{indsol}
$\forall W_0\in L^2(\mathbb{R}^{2d})$, the unique weak solution uniformly bounded in $ L^2(\mathbb{R}^{2d})$ of \eqref{radtranseq} is given by 
\begin{equation}\label{indsoleq}W(t,\xt{x},\xt{k})=\frac{1}{(2\pi)^{2d}}\int d\xt{y}d\xt{q} e^{i(\xt{x}\cdot\xt{y}+\xt{k}\cdot\xt{q})}\exp\Big(\int_0 ^t du \Psi(\xt{q}+u\xt{y})\Big)\widehat{W}_0(\xt{y},\xt{q}+t\xt{y}).\end{equation}
where $\Psi$ defined by \eqref{caracexp}.
\end{lem}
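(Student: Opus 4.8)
The plan is to pass to the Fourier variables $(\xt{y},\xt{q})$ dual to $(\xt{x},\xt{k})$, where the nonlocal operator $\mathcal{L}$ becomes a multiplication operator and the transport term becomes a first-order differential operator; solve the resulting scalar equation by the method of characteristics to produce the candidate \eqref{indsoleq}; and finally verify that this candidate is a weak solution uniformly bounded in $L^2(\mathbb{R}^{2d})$, so that the weak uniqueness established in Section \ref{proofthlimit} identifies it as \emph{the} solution. Writing $\widehat{W}(t,\xt{y},\xt{q})=\int d\xt{x}d\xt{k}\,e^{-i(\xt{x}\cdot\xt{y}+\xt{k}\cdot\xt{q})}W(t,\xt{x},\xt{k})$, I would first record the two symbol computations $\widehat{\xt{k}\cdot\nabla_{\xt{x}}W}=-\xt{y}\cdot\nabla_{\xt{q}}\widehat{W}$ and, using \eqref{caracexp} together with the form of $\mathcal{L}$ from Proposition \ref{identification}, $\widehat{\mathcal{L}W}=\Psi(\xt{q})\widehat{W}$; here $\Psi$ is real and nonpositive because $\sigma$ is even (as $\widehat{R}_0$ and $\mathfrak{g}$ are even). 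Thus \eqref{radtranseq} is equivalent to $\partial_t\widehat{W}-\xt{y}\cdot\nabla_{\xt{q}}\widehat{W}=\Psi(\xt{q})\widehat{W}$ with $\widehat{W}(0)=\widehat{W}_0$.

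The characteristics $\dot{\xt{q}}=-\xt{y}$ are $\xt{q}(s)=\xt{q}_0-s\xt{y}$, and integrating the scalar ODE along them gives $\widehat{W}(t,\xt{y},\xt{q})=\exp\big(\int_0^t\Psi(\xt{q}+u\xt{y})\,du\big)\,\widehat{W}_0(\xt{y},\xt{q}+t\xt{y})$, whose inverse Fourier transform is precisely \eqref{indsoleq}; note that at $t=0$ the exponent vanishes, so $W(0)=W_0$. To make this rigorous I would take \eqref{indsoleq} as the \emph{definition} of $W(t)$ and argue directly. The uniform $L^2$ bound follows from Plancherel together with $\big|\exp(\int_0^t\Psi(\xt{q}+u\xt{y})\,du)\big|=\exp(\int_0^t\mathrm{Re}\,\Psi\,du)\leq1$ and the measure-preserving change of variables $\xt{q}\mapsto\xt{q}+t\xt{y}$, yielding $\|W(t)\|_{L^2}\leq\|W_0\|_{L^2}$. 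For the weak formulation I would pair against $\lambda\in\mathcal{C}^\infty_0(\mathbb{R}^{2d})$, express $\langle W(t),\lambda\rangle$ in Fourier variables by Parseval, and perform the change of variables $\xt{q}\mapsto\xt{q}+t\xt{y}$ so that the whole $t$-dependence is carried by the smooth factors $\exp(\int_0^t\Psi\,du)$ and $\widehat{\lambda}(\xt{y},\xt{q}-t\xt{y})$ while $\widehat{W}_0$ stays fixed. Differentiating under the integral sign (using $\partial_t\int_0^t\Psi(\xt{q}-v\xt{y})dv=\Psi(\xt{q}-t\xt{y})$ and $\partial_t\widehat{\lambda}(\xt{y},\xt{q}-t\xt{y})=-\xt{y}\cdot\nabla_{\xt{q}}\widehat{\lambda}(\xt{y},\xt{q}-t\xt{y})$) and then changing variables back reproduces exactly $\langle W(t),\xt{k}\cdot\nabla_{\xt{x}}\lambda+\mathcal{L}\lambda\rangle$; integrating in $t$ gives the weak identity, and the uniqueness from Section \ref{proofthlimit} closes the argument.

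The main obstacle is the low regularity of the data. Since $W_0$ is only in $L^2$, $\widehat{W}_0$ cannot be differentiated, so the pointwise characteristic identity $\partial_t\widehat{W}=\xt{y}\cdot\nabla_{\xt{q}}\widehat{W}+\Psi\widehat{W}$ is not literally available and one cannot simply substitute it into the weak formulation. The device that circumvents this is precisely the change of variables above, which transfers every $t$-derivative onto the Schwartz function $\widehat{\lambda}$ and onto the exponential multiplier before differentiating. The justification of differentiation under the integral then rests, via Cauchy--Schwarz against $\widehat{W}_0\in L^2$, on the bound $\big|\exp(\int_0^t\Psi\,du)\big|\leq1$ and on the linear growth estimate $|\Psi(\xt{q})|\leq C|\xt{q}|$, which follows from $\int d\xt{p}\,\widehat{R}_0(\xt{p})|\xt{p}|/\mathfrak{g}(\xt{p})<+\infty$ in \eqref{longrange} and the compact support of $\widehat{R}_0$, combined with the rapid decay of $\widehat{\lambda}$ and its derivatives.
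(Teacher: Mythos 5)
Your argument is correct and follows essentially the same route as the paper: diagonalize $\mathcal{L}$ by the Fourier transform in $(\xt{x},\xt{k})$, solve the resulting transport--multiplication equation along characteristics, deduce $\|W(t)\|_{L^2}\leq\|W_0\|_{L^2}$ from $\mathrm{Re}\,\Psi\leq 0$, and conclude by the weak uniqueness established in Section \ref{proofthlimit}. The only difference is technical rather than conceptual: the paper first verifies the formula classically for $W_0\in\mathcal{C}^\infty_0(\mathbb{R}^{2d})$ and then extends by density using the uniform $L^2$ bound, whereas you verify the weak formulation directly for $L^2$ data by shifting all $t$-derivatives onto the test function and the exponential multiplier (justified by $|\Psi(\xt{q})|\leq C|\xt{q}|$ from \eqref{longrange}) --- both are valid.
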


\begin{proof}[of Lemma \ref{indsol}]
First, let $W_0\in \mathcal{C}^\infty_0(\mathbb{R}^{2d})$ and let
\[\tilde{W}(t,\xt{y},\xt{q})=\exp\Big(\int_0 ^t du \Psi(\xt{q}+u\xt{y})\Big)\widehat{W}_0(\xt{y},\xt{q}+t\xt{y}).\]
$W$ is the inverse Fourier transform of $\tilde{W}$. Then,
\[\partial_t \tilde{W}=\xt{y}\cdot\nabla_\xt{q} \tilde{W}+\Psi(\xt{q})\tilde{W},\]
 and taking the inverse Fourier transform of this last equation we obtain that $W$ is a classical solution, and then a weak solution of \eqref{radtranseq} uniformly bounded in $L^2(\mathbb{R}^{2d})$. In fact, 
 \begin{equation}\label{unifboundreg}\|W(t)\|_{L^2(\mathbb{R}^{2d})}\leq \|W_0\|_{L^2(\mathbb{R}^{2d})}\end{equation}
 since
 \[\lvert \exp\Big(\int_0 ^t du \Psi(\xt{q}+u\xt{y})\Big)\Big \rvert=\exp\Big(-\int_0^t du \int d \xt{p}\sigma(\xt{p})(1-\cos(\xt{p}\cdot(\xt{q}+u\xt{y})))\Big)\leq 1. \]  
 Finally, using \eqref{unifboundreg}, the uniqueness of \eqref{radtranseq}, and by density, we obtain that  \eqref{indsoleq} is the unique solution of  \eqref{radtranseq}. That concludes the proof of Lemma \ref{indsol}.
$\square$
\end{proof}

Now, let us show the key lemma of the proof of Theorem \ref{regularity}. This lemma is a consequence of assumption \eqref{SDCAreg}. 

\begin{lem}\label{lemkey}
Let $\mu>0$ and $t\in(3/(2\mu),2/\mu)$. There exists $M_{t,\mu}>0$ and $C_{t,\theta,\mu}>0$, such that $\forall \lvert \xt{q}\rvert \geq M_{t,\mu}$ and $\forall \lvert \xt{y}\rvert \geq M_{t,\mu}$
\[
\exp\Big(\int_0^{t}du \Psi(\xt{q}+u\mu\xt{y})\Big) \leq \exp(-C_{t,\theta,\mu}\lvert \xt{q} \rvert^{\theta})\emph{\1}_{(\lvert \xt{q}\rvert \geq \mu t \lvert \xt{y}\vert)}+\exp(-C_{t,\theta,\mu}\lvert \xt{y} \rvert^{\theta})\emph{\1}_{(\lvert \xt{q}\rvert< \mu t \lvert \xt{y}\vert)}.
\]
\end{lem}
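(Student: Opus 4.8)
The plan is to first pin down the large-argument behavior of the characteristic exponent $\Psi$ defined in \eqref{caracexp}, and then to run a purely geometric argument on the segment $u\mapsto\xt{q}+u\mu\xt{y}$, $u\in[0,t]$, locating a subinterval of fixed length on which $\lvert\xt{q}+u\mu\xt{y}\rvert$ stays comparable to $\lvert\xt{q}\rvert$ or to $\lvert\xt{y}\rvert$. First I would record that, since $\widehat{R}_0$ and $\mathfrak{g}$ are even, $\sigma$ is even, so
\[\Psi(\xt{q})=-\int d\xt{p}\,\sigma(\xt{p})\big(1-\cos(\xt{p}\cdot\xt{q})\big)\le 0,\]
which is the feature that lets me discard parts of the $u$-integral freely. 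The crucial input is the lower bound $-\Psi(\xt{q})\ge c_1\lvert\xt{q}\rvert^{\theta}$ valid for $\lvert\xt{q}\rvert\ge M$, where $c_1>0$ and $M>0$ depend only on $d,\theta,\sigma$. This is where \eqref{SDCAreg} enters: near the origin $\sigma(\xt{p})\ge c_0\lvert\xt{p}\rvert^{-d-\theta}$ for some $c_0>0$, and restricting the nonnegative integrand to a small ball and rescaling $\xt{p}=\xt{r}/\lvert\xt{q}\rvert$ gives
\[-\Psi(\xt{q})\ge c_0\lvert\xt{q}\rvert^{\theta}\int_{\lvert\xt{r}\rvert\le\rho\lvert\xt{q}\rvert}\frac{1-\cos(\xt{r}\cdot\widehat{\xt{q}})}{\lvert\xt{r}\rvert^{d+\theta}}d\xt{r},\]
and the integral converges as $\lvert\xt{q}\rvert\to\infty$ to a strictly positive, direction-independent constant (by rotational invariance, and finite because $0<\theta<1$), yielding the claim.

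Next, using $\Psi\le0$, I note that $\int_0^t\Psi(\xt{q}+u\mu\xt{y})du\le\int_I\Psi(\xt{q}+u\mu\xt{y})du$ for any subinterval $I\subset[0,t]$, so it suffices to exhibit such an $I$ of fixed length on which the argument is large and then apply the bound on $-\Psi$. In the regime $\lvert\xt{q}\rvert\ge\mu t\lvert\xt{y}\rvert$ this is immediate: on $I=[0,t/2]$ one has $\lvert\xt{q}+u\mu\xt{y}\rvert\ge\lvert\xt{q}\rvert-\frac{t}{2}\mu\lvert\xt{y}\rvert\ge\lvert\xt{q}\rvert/2$, so provided $\lvert\xt{q}\rvert/2\ge M$ the integrand is $\le -c_1(\lvert\xt{q}\rvert/2)^{\theta}$ throughout $I$, and integrating over length $t/2$ produces the factor $\exp(-C\lvert\xt{q}\rvert^{\theta})$ on the event $\lvert\xt{q}\rvert\ge\mu t\lvert\xt{y}\rvert$.

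The hard part is the complementary regime $\lvert\xt{q}\rvert<\mu t\lvert\xt{y}\rvert$, where the straight path can pass close to the origin and a single endpoint estimate fails. Here I would use that the endpoints $\xt{q}$ and $\xt{q}+t\mu\xt{y}$ are separated by $t\mu\lvert\xt{y}\rvert$, and the hypothesis $t\in(3/(2\mu),2/\mu)$ forces $t\mu\lvert\xt{y}\rvert\ge\frac{3}{2}\lvert\xt{y}\rvert$; hence by the triangle inequality $\max(\lvert\xt{q}\rvert,\lvert\xt{q}+t\mu\xt{y}\rvert)\ge\frac{3}{4}\lvert\xt{y}\rvert$, so at least one endpoint is far from the origin. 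On a subinterval of length $1/(4\mu)$ adjacent to that endpoint (which fits inside $[0,t]$ since $t>3/(2\mu)>1/(4\mu)$) one keeps $\lvert\xt{q}+u\mu\xt{y}\rvert\ge\lvert\xt{y}\rvert/2$, and integrating $-\Psi\ge c_1(\lvert\xt{y}\rvert/2)^{\theta}$ over this interval yields $\exp(-C\lvert\xt{y}\rvert^{\theta})$ on the event $\lvert\xt{q}\rvert<\mu t\lvert\xt{y}\rvert$. Choosing $M_{t,\mu}=2M$ guarantees that $\lvert\xt{q}\rvert/2$ and $\lvert\xt{y}\rvert/2$ both exceed the threshold $M$ in the two regimes, and taking $C_{t,\theta,\mu}$ to be the smaller of the two constants produced above gives the stated inequality. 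The only genuine obstacle is this geometric lower bound on $\lvert\xt{q}+u\mu\xt{y}\rvert$ in the near–anti-parallel case, which is precisely what the constraint $t\in(3/(2\mu),2/\mu)$ is engineered to control.
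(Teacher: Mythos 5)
Your proof is correct, but it follows a genuinely different route from the paper's. You first establish the pointwise bound $-\Psi(\xt{z})\ge c_1\lvert\xt{z}\rvert^{\theta}$ for $\lvert\xt{z}\rvert\ge M$ (by restricting to a small ball where $\sigma(\xt{p})\ge c_0\lvert\xt{p}\rvert^{-d-\theta}$, rescaling $\xt{p}=\xt{r}/\lvert\xt{z}\rvert$, and using rotational invariance plus the integrability of $(1-\cos(\xt{r}\cdot\widehat{\xt{z}}))\lvert\xt{r}\rvert^{-d-\theta}$ for $0<\theta<1$), and then reduce the lemma to pure geometry of the segment $u\mapsto\xt{q}+u\mu\xt{y}$: since $\Psi\le 0$ you may integrate over any subinterval, and you locate one of fixed length on which the argument stays comparable to $\lvert\xt{q}\rvert$ (regime $\lvert\xt{q}\rvert\ge\mu t\lvert\xt{y}\rvert$, interval $[0,t/2]$) or to $\lvert\xt{y}\rvert$ (complementary regime, a length-$1/(4\mu)$ interval adjacent to the endpoint that is at distance $\ge\tfrac34\lvert\xt{y}\rvert$ from the origin). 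The paper instead keeps the full $u$-integral, restricts the $\xt{p}$-integration to $\lvert\xt{p}\rvert<1/(2\lvert\xt{q}\rvert)$ and $\lvert\xt{p}\rvert<1/(2\mu t\lvert\xt{y}\rvert)$ so that $1-\cos$ is bounded below by the square of the phase, computes $\int_0^t\lvert\xt{p}\cdot(\xt{q}+u\mu\xt{y})\rvert^2du$ exactly, and uses the completion-of-squares identity
\[
t\lvert \xt{p}\cdot\xt{q}\rvert ^2+ \mu t^2 \xt{p}\cdot\xt{q}\,\xt{p}\cdot\xt{y}+\mu^2 \tfrac{t^3}{3}\lvert \xt{p}\cdot\xt{y}\rvert ^2= \big( t-\mu\tfrac{t^2}{2}\big)\lvert \xt{p}\cdot\xt{q}\rvert ^2 + \mu t^2\big(\mu\tfrac{t}{3}-\tfrac{1}{2}\big)\lvert \xt{p}\cdot\xt{y}\rvert ^2+\mu\tfrac{t^2}{2}\lvert \xt{p}\cdot (\xt{q}+\xt{y})\rvert^2
\]
to control \emph{both} $\lvert\xt{p}\cdot\xt{q}\rvert^2$ and $\lvert\xt{p}\cdot\xt{y}\rvert^2$ simultaneously; the hypothesis $t\in(3/(2\mu),2/\mu)$ is exactly what makes the first two coefficients nonnegative. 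So your reading of the role of that hypothesis is slightly off: in the paper it governs the algebraic identity, not the segment geometry, and in fact your argument only uses the lower bound $t\mu\ge 3/2$ (for your specific constants) and never the upper bound $t<2/\mu$ — a small gain in generality. What your approach buys is modularity (all the analysis is isolated in one standard stable-like lower bound on the L\'evy exponent, and the rest is elementary) at the cost of a case split; the paper's approach treats both regimes in one computation but is tied to the particular window of $t$.
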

\begin{proof}[of Lemma \ref{lemkey}]

This lemma is a consequence of assumption \eqref{SDCAreg}.  First, we have
\[\begin{split}
\int_0^{t}du \Psi(\xt{q}+u\mu\xt{y}) &= - \int_0^{t}du \int d\xt{p} \sigma(\xt{p})(1-\cos(\xt{p}\cdot(\xt{q}+u\mu\xt{y})))\\
&\leq -\int_0^{t}du \int_{\substack{\lvert \xt{p} \rvert< 1/(2\lvert\xt{q}\rvert)\\ \lvert \xt{p} \rvert< 1/(2\mu t\lvert\xt{y}\rvert) } } d\xt{p} \sigma(\xt{p})(1-\cos(\xt{p}\cdot(\xt{q}+u\mu \xt{y})))\\
&\leq -\int_0^{t}du \int_{\substack{\lvert \xt{p} \rvert< 1/(2\lvert\xt{q}\rvert)\\ \lvert \xt{p} \rvert< 1/(2\mu t\lvert\xt{y}\rvert) } } d\xt{p} \sigma(\xt{p})\lvert \xt{p}\cdot(\xt{q}+u\mu \xt{y})\rvert^2
\end{split}\]
Then,
\[\begin{split}
\int_0^{t} du \lvert \xt{p}\cdot(\xt{q}+u\mu \xt{y})\rvert^2&= t\lvert \xt{p}\cdot\xt{q}\rvert ^2+ \mu t^2 \xt{p}\cdot\xt{q}\,\, \xt{p}\cdot\xt{y}+\mu^2 \frac{t^3}{3}\lvert \xt{p}\cdot\xt{y}\rvert ^2\\
&= \big( t-\mu\frac{t^2}{2}\big)\lvert \xt{p}\cdot\xt{q}\rvert ^2 + \big(\mu^2\frac{t^3}{3}-\mu\frac{t^2}{2}\big)\lvert \xt{p}\cdot\xt{y}\rvert ^2+\mu\frac{t^2}{2}\lvert \xt{p}\cdot (\xt{q}+\xt{y})\rvert^2\\
&\geq  \underbrace{t\big( 1-\mu\frac{t}{2}\big)}_{\geq 0}\lvert \xt{p}\cdot\xt{q}\rvert ^2 + \underbrace{\mu t^2\big(\mu\frac{t}{3}-\frac{1}{2}\big)}_{\geq 0}\lvert \xt{p}\cdot\xt{y}\rvert ^2,
\end{split}\]
since  $t\in(3/(2\mu),2/\mu)$. Consequently,
\[
\int_0^{t}du \Psi(\xt{q}+u\mu\xt{y}) \leq  -C_{t,\mu} (\lvert \xt{p}\cdot\xt{q}\rvert ^2 + \lvert \xt{p}\cdot\xt{y}\rvert ^2 ),
\]
and
\[\begin{split}
\exp\Big(\int_0^{t}du \Psi(\xt{q}+u\mu \xt{y})\Big) &\leq \exp(-C_{t,\mu}    \int_{\substack{\lvert \xt{p} \rvert< 1/(2\lvert\xt{q}\rvert)\\ \lvert \xt{p} \rvert\leq 1/(2\mu t\lvert\xt{y}\rvert) } } d\xt{p} \sigma(\xt{p}) \lvert \xt{p}\cdot\xt{q}\rvert ^2 )\1_{(\lvert \xt{q}\rvert \geq \mu t \lvert \xt{y}\vert)}\\
&+\exp(- C_{t,\mu}\int_{\substack{\lvert \xt{p} \rvert< 1/(2\lvert\xt{q}\rvert)\\ \lvert \xt{p} \rvert< 1/(2\mu t\lvert\xt{y}\rvert) } } d\xt{p}\sigma(\xt{p}) \lvert \xt{p}\cdot\xt{y}\rvert ^2)\1_{(\lvert \xt{q}\rvert< \mu t \lvert \xt{y}\vert)}.
\end{split}\]

Moreover, according to \eqref{SDCAreg}, there exists $M_{t,\mu}$ large enough so that $\forall \lvert \xt{q}\rvert \geq M_{t,\mu}$ and $\forall \lvert \xt{y}\rvert \geq M_{t,\mu}$,

\begin{equation}\label{expterm}\begin{split}
\exp\Big(\int_0^{t}du \Psi(\xt{q}+u\mu \xt{y})\Big)&\leq \exp\Big(- C_{t,\mu}\lvert \xt{q}\rvert^\theta \int_{\lvert \xt{p} \rvert\leq 1/2  } d\xt{p} \frac{1}{\lvert \xt{p}\rvert^{d+\theta}}\big\lvert \xt{p}\cdot\frac{\xt{q}}{\lvert \xt{q}\rvert}\big\rvert ^2\Big)\1_{(\lvert \xt{q}\rvert\geq \mu t \lvert \xt{y}\vert)}\\
&+\exp\Big(- C_{t,\mu}\lvert \xt{y}\rvert^\theta \int_{\lvert \xt{p} \rvert\leq 1/2  } d\xt{p} \frac{1}{\lvert \xt{p}\rvert^{d+\theta}} \big\lvert \xt{p}\cdot\frac{\xt{y}}{\lvert \xt{y}\rvert}\big\rvert ^2\Big)\1_{(\lvert \xt{q}\rvert< \mu t \lvert \xt{y}\vert)}.
\end{split}\end{equation}
Finally, thanks to a unitary transformation on $\mathbb{S}^{d-1}$,
\[ \int_{\lvert \xt{p} \rvert\leq 1/2  } d\xt{p} \frac{1}{\lvert \xt{p}\rvert^{d+\theta}} \big\lvert \xt{p}\cdot\frac{\xt{z}}{\lvert \xt{z}\rvert}\big\rvert ^2=C_\theta \int_{\mathbb{S}^{d-1}} \lvert \xt{u}\cdot\xt{v}\rvert^2 dS(\xt{u}),\]
 where $\xt{v}\in \mathbb{S}^{d-1}$ is fixed.
$\square$
\end{proof}

The following lemma shows the transport of regularity from the variable $\xt{k}$ to the variable $\xt{x}$  through the transport term in \eqref{radtranseq}. This transport of regularity is a direct consequence of Lemma \ref{lemkey}.

\begin{lem}\label{lemreg2}
Let $W$ be the unique weak solution uniformly bounded in $ L^2(\mathbb{R}^{2d})$ of \eqref{radtranseq}. We have $\forall t_1> 0$
\[W\in L^\infty \Big([t_1,+\infty),\bigcap _{k\geq 0} H^k (\mathbb{R}^{2d})\Big) \] 
\end{lem}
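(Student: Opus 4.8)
The plan is to read the regularity directly off the explicit representation of Lemma~\ref{indsol}. Writing $(\xt{y},\xt{q})$ for the Fourier variables dual to $(\xt{x},\xt{k})$, that formula says the Fourier transform of $W(t,\cdot,\cdot)$ is
\[\widehat{W}(t,\xt{y},\xt{q})=E(t,\xt{y},\xt{q})\,\widehat{W}_0(\xt{y},\xt{q}+t\xt{y}),\qquad E(t,\xt{y},\xt{q})=\exp\Big(\int_0^t du\,\Psi(\xt{q}+u\xt{y})\Big),\]
and, since $\sigma$ is even, $\Psi$ is real and nonpositive, so $E$ is real with $0<E\le 1$ and $E=\exp\big(-\int_0^t du\int d\xt{p}\,\sigma(\xt{p})(1-\cos(\xt{p}\cdot(\xt{q}+u\xt{y})))\big)$. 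By Plancherel, proving $W(t)\in H^k(\mathbb{R}^{2d})$ with a bound uniform in $t\ge t_1$ amounts to controlling $\int d\xt{y}d\xt{q}\,(1+|\xt{y}|^2+|\xt{q}|^2)^k E(t,\xt{y},\xt{q})^2|\widehat{W}_0(\xt{y},\xt{q}+t\xt{y})|^2$ uniformly for $t\ge t_1$. First I would remove the $t$-dependence of the decaying factor: because the integrand defining $E$ is nonnegative, $t\mapsto E(t,\xt{y},\xt{q})$ is nonincreasing, hence $E(t,\xt{y},\xt{q})\le E(t_1,\xt{y},\xt{q})$ for all $t\ge t_1$. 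So it suffices to establish the single pointwise claim
\[B_{k,t_1}:=\sup_{\xt{y},\xt{q}}(1+|\xt{y}|^2+|\xt{q}|^2)^k\,E(t_1,\xt{y},\xt{q})^2<+\infty ,\]
after which the measure-preserving change of variables $\xt{q}\mapsto\xt{q}+t\xt{y}$ gives $\|W(t)\|_{H^k}^2\le B_{k,t_1}\,\|\widehat{W}_0\|_{L^2}^2$ for every $t\ge t_1$, which is the desired $L^\infty$ bound.

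The heart of the matter is $B_{k,t_1}<\infty$, i.e. super-polynomial decay of $E(t_1,\cdot,\cdot)$ as $|\xt{y}|^2+|\xt{q}|^2\to\infty$, and this is exactly where \eqref{SDCAreg} enters through Lemma~\ref{lemkey}. Given $t_1>0$ I would pick $\mu\in(3/(2t_1),2/t_1)$, so that $t_1\in(3/(2\mu),2/\mu)$, and apply Lemma~\ref{lemkey} after the substitution $\xt{y}=\mu\xt{y}'$, which rewrites $\int_0^{t_1}\Psi(\xt{q}+u\xt{y})\,du$ as $\int_0^{t_1}\Psi(\xt{q}+u\mu\xt{y}')\,du$. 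On the range where both $|\xt{q}|$ and $|\xt{y}|$ are large the lemma then yields
\[E(t_1,\xt{y},\xt{q})\le e^{-C|\xt{q}|^\theta}\mathbf{1}_{(|\xt{q}|\ge t_1|\xt{y}|)}+e^{-C'|\xt{y}|^\theta}\mathbf{1}_{(|\xt{q}|< t_1|\xt{y}|)},\]
and on each sub-region the surviving variable dominates the whole weight ($|\xt{y}|\le|\xt{q}|/t_1$ on the first, $|\xt{q}|<t_1|\xt{y}|$ on the second), so the stretched-exponential factor beats $(1+|\xt{y}|^2+|\xt{q}|^2)^k$.

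It then remains to patch the directions where only one of $|\xt{y}|,|\xt{q}|$ is large while the other stays bounded; on the fully bounded part $E\le 1$ makes the weight trivially controlled. For the one-sided directions I would rerun the computation inside the proof of Lemma~\ref{lemkey}: restricting the $\xt{p}$-integral to a small ball around $0$, using $1-\cos x\ge c\,x^2$ there, and invoking the fact that the quadratic form $\int_0^{t_1}|\xt{p}\cdot(\xt{q}+u\xt{y})|^2\,du$ in $(\xt{p}\cdot\xt{q},\xt{p}\cdot\xt{y})$ is positive definite for every $t_1>0$ (its $2\times2$ matrix has determinant $t_1^4/12>0$). Together with \eqref{SDCAreg} this gives $E(t_1,\xt{y},\xt{q})\le e^{-c|\xt{q}|^\theta}$ when $|\xt{q}|$ is large and $E(t_1,\xt{y},\xt{q})\le e^{-c|\xt{y}|^\theta}$ when $|\xt{y}|$ is large. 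Assembling the estimates over all regions proves $B_{k,t_1}<\infty$ for every $k$, hence $W\in L^\infty([t_1,+\infty),\bigcap_{k\ge0}H^k(\mathbb{R}^{2d}))$. The hard part will be precisely this patching: Lemma~\ref{lemkey} is stated only when both frequencies are large, so the genuinely one-sided directions must be handled by hand, and one has to check that the decay rate $|\xt{q}|^\theta$ (resp. $|\xt{y}|^\theta$) supplied by \eqref{SDCAreg}, slower than Gaussian, is still sufficient to overpower an arbitrary polynomial weight.
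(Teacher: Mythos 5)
Your proposal is correct and follows essentially the same route as the paper: read the decay off the explicit Fourier representation of Lemma \ref{indsol}, obtain stretched-exponential decay $e^{-c(|\xt{q}|^{\theta}+|\xt{y}|^{\theta})}$ of the exponential factor from the quadratic-form lower bound on $\int_0^{t}|\xt{p}\cdot(\xt{q}+u\xt{y})|^2du$ restricted to small $|\xt{p}|$ together with \eqref{SDCAreg} (this is exactly Lemma \ref{lemkey} and the intermediate estimate \eqref{expterm}), and split $(\xt{y},\xt{q})$-space into the four regions according to which variables are large. Your two refinements --- the monotonicity $E(t,\xt{y},\xt{q})\le E(t_1,\xt{y},\xt{q})$ for $t\ge t_1$, which makes the uniformity in $t$ explicit where the paper leaves it implicit behind $t$-dependent constants, and the observation that the quadratic form is positive definite for every $t>0$ (determinant $t^4/12$), which handles the one-sided regions directly --- are both valid and tidy up the argument without changing its substance.
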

\begin{proof}[of Lemma \ref{lemreg2}]
Let $t_1>0$, $t>t_1$, and $\eta>0$, such that $t\in(3/(2\mu),2/\mu)$. Then, using a change of variable, we have
\[\begin{split}
\|W(t)\|^2_{H^k(\mathbb{R}^{2d})}\leq& C_\mu \int_{\substack{\lvert \xt{q}\rvert \geq M_{t,\mu}\\ \lvert \xt{y}\rvert \geq M_{t,\mu}}} d\xt{y}d\xt{q} (\lvert \xt{q}\rvert^k+\lvert \xt{y}\rvert^k  ) e^{\int_0^{t}du \Psi(\xt{q}+u\mu \xt{y})}\lvert \widehat{W}_0(\xt{y},\xt{q}+\mu t \xt{y})\rvert ^2\\
&+ C_\mu  \int_{\substack{\lvert \xt{q}\rvert \geq M_{t,\mu}\\ \lvert \xt{y}\rvert \leq M_{t,\mu}}} d\xt{y}d\xt{q} \lvert \xt{q}\rvert^k e^{\int_0^{t}du \Psi(\xt{q}+u\mu \xt{y})}\lvert \widehat{W}_0(\xt{y},\xt{q}+\mu t \xt{y})\rvert ^2\\
&+ C_\mu  \int_{\substack{\lvert \xt{q}\rvert \leq M_{t,\mu}\\ \lvert \xt{y}\rvert \geq M_{t,\mu}}} d\xt{y}d\xt{q} \lvert \xt{y}\rvert^k e^{\int_0^{t}du \Psi(\xt{q}+u\mu \xt{y})}\lvert \widehat{W}_0(\xt{y},\xt{q}+\mu t \xt{y})\rvert ^2\\
&+ C_\mu  \int_{\substack{\lvert \xt{q}\rvert \leq M_{t,\mu}\\ \lvert \xt{y}\rvert \leq M_{t,\mu}}} d\xt{y}d\xt{q}\lvert \widehat{W}_0(\xt{y},\xt{q}+\mu t \xt{y})\rvert ^2.
\end{split}\]
The first and last terms are finite since $W_0\in L^2(\mathbb{R}^{2d})$ and thanks to Lemma \ref{lemkey}.  The second and third terms are finite using \eqref{expterm} depending on whether $\xt{q}$ or $\xt{y}$ is bounded.
 $\square$
\end{proof}

Finally, from Lemma \ref{lemreg2} we have the continuity of $W$.

\begin{lem}\label{lemreg3}
Let $W$ be the unique weak solution uniformly bounded in $ L^2(\mathbb{R}^{2d})$ of \eqref{radtranseq}. We have
\[W\in \mathcal{C}^0 \Big((0,+\infty),\bigcap _{k\geq 0} H^k (\mathbb{R}^{2d})\Big). \] 
\end{lem}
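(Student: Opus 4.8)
The plan is to start from the explicit representation of $W$ obtained in Lemma \ref{indsol} and to argue entirely on the Fourier side. Writing $A(t,\xt{y},\xt{q})=\exp(\int_0^t \Psi(\xt{q}+u\xt{y})\,du)$, one has $\widehat{W}(t,\xt{y},\xt{q})=A(t,\xt{y},\xt{q})\,\widehat{W}_0(\xt{y},\xt{q}+t\xt{y})$, so by Plancherel $\|W(t)-W(t_0)\|_{H^k(\mathbb{R}^{2d})}^2$ is a constant multiple of the integral of $(1+|\xt{y}|^2+|\xt{q}|^2)^{k}\,|\widehat{W}(t,\xt{y},\xt{q})-\widehat{W}(t_0,\xt{y},\xt{q})|^2$. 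It therefore suffices to show, for each fixed $k\ge 0$ and each $t_0>0$, that this quantity tends to $0$ as $t\to t_0$; continuity into $\bigcap_{k\ge0}H^k(\mathbb{R}^{2d})$ (with its projective-limit topology) is equivalent to continuity into every $H^k(\mathbb{R}^{2d})$. First I would localize: fix $\mu\in(3/(2t_0),2/t_0)$ and a compact interval $I\ni t_0$ with $I\subset(3/(2\mu),2/\mu)$. After the change of variables used in the proof of Lemma \ref{lemreg2}, Lemma \ref{lemkey} applies uniformly for $t\in I$ and produces constants for which $m_k(\xt{y},\xt{q})\,\sup_{t\in I}|A(t,\xt{y},\xt{q})|$ is bounded on $\mathbb{R}^{2d}$, where $m_k=(1+|\xt{y}|^2+|\xt{q}|^2)^{k/2}$ (the stretched-exponential decay in $|\xt{y}|$ or $|\xt{q}|$ beats the polynomial weight). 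Call this uniform bound $B$.

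Next I would split $\widehat{W}(t)-\widehat{W}(t_0)=[A(t)-A(t_0)]\,\widehat{W}_0(\xt{y},\xt{q}+t\xt{y})+A(t_0)\,[\widehat{W}_0(\xt{y},\xt{q}+t\xt{y})-\widehat{W}_0(\xt{y},\xt{q}+t_0\xt{y})]$ and treat the two $H^k$-contributions separately. For the first, $A(t,\xt{y},\xt{q})$ is continuous in $t$ for every fixed $(\xt{y},\xt{q})$ (as $\Psi$ is continuous, thanks to \eqref{SDCAreg} and the compact support of $\widehat{R}_0$), with $|A|\le1$ since $\mathrm{Re}\,\Psi\le0$, and $m_k|A(t)-A(t_0)|\le 2B$ for $t\in I$. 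Writing $|\widehat{W}_0(\xt{y},\xt{q}+t\xt{y})|^2=|\widehat{W}_0(\xt{y},\xt{q}+t_0\xt{y})|^2+(\,|\widehat{W}_0(\xt{y},\xt{q}+t\xt{y})|^2-|\widehat{W}_0(\xt{y},\xt{q}+t_0\xt{y})|^2\,)$, the part carrying the $t$-independent density tends to $0$ by dominated convergence (the integrand is dominated by $(2B)^2|\widehat{W}_0(\xt{y},\xt{q}+t_0\xt{y})|^2\in L^1$ and converges pointwise to $0$), while the remainder is bounded by $(2B)^2\,\big\|\,|\widehat{W}_0\circ S_t|^2-|\widehat{W}_0\circ S_{t_0}|^2\,\big\|_{L^1}$, where $S_t(\xt{y},\xt{q})=(\xt{y},\xt{q}+t\xt{y})$; since $S_t$ is a measure-preserving isometry depending continuously on $t$, one has $\widehat{W}_0\circ S_t\to\widehat{W}_0\circ S_{t_0}$ in $L^2$, hence $|\widehat{W}_0\circ S_t|^2\to|\widehat{W}_0\circ S_{t_0}|^2$ in $L^1$.

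For the second contribution I would factor out the bounded multiplier $m_k\,A(t_0)$, whose sup-norm is at most $B$ by the localization step, to obtain $\|m_kA(t_0)(\widehat{W}_0\circ S_t-\widehat{W}_0\circ S_{t_0})\|_{L^2}\le B\,\|\widehat{W}_0\circ S_t-\widehat{W}_0\circ S_{t_0}\|_{L^2}$. The shear family $(S_t)_t$ is a strongly continuous one-parameter group of isometries of $L^2(\mathbb{R}^{2d})$ (it is $e^{tN}$ for the nilpotent $N(\xt{y},\xt{q})=(0,\xt{y})$, and strong continuity follows from the density of $\mathcal{C}^\infty_0$ together with the isometry property), so the right-hand side tends to $0$ as $t\to t_0$. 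Combining the two contributions gives $\|W(t)-W(t_0)\|_{H^k}\to0$, that is $W\in\mathcal{C}^0((0,+\infty),H^k(\mathbb{R}^{2d}))$ for every $k$, which is the claim.

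The main obstacle is the second contribution: because $W_0$ is only assumed to lie in $L^2$, the map $t\mapsto\widehat{W}_0(\xt{y},\xt{q}+t\xt{y})$ carries no pointwise regularity, so the $\xt{y}$-shear cannot be handled inside the weighted norm by a naive dominated-convergence argument. The key that unlocks it is precisely the smoothing estimate of Lemma \ref{lemkey}: it makes $m_k\,A(t_0)$ a bounded multiplier, which lets one peel the polynomial weight off the shear and reduce the weighted continuity to the standard unweighted strong continuity of the shear group on $L^2$. The only routine verifications are the uniformity of the constants in Lemma \ref{lemkey} over the compact interval $I$ and the continuity of $\Psi$, both following from \eqref{SDCAreg} and the compact support of $\widehat{R}_0$.
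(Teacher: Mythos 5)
Your argument is correct, and it reaches the conclusion by a route that differs from the paper's in the one step that actually matters. The paper handles the continuity at $t_1>0$ by restarting the flow at an intermediate time $t_0\in(0,t_1\wedge t_n)$: it writes $\widehat{W}(t_n)$ in terms of $\widehat{W}(t_0,\cdot)$, which by Lemma \ref{lemreg2} already lies in every weighted $L^2$ space, and then passes to the limit by dominated convergence directly in the weighted norm; the shear there acts on a rapidly decaying function, so the polynomial weight is absorbed into $\lvert\widehat{W}(t_0,\cdot)\rvert^2$ (at the cost of an implicit density or a.e.-continuity argument to justify the pointwise convergence of the sheared factor). You instead never leave time $0$: you keep the rough datum $\widehat{W}_0\in L^2$ and transfer the entire weight $m_k$ onto the multiplier $m_k\,A(t_0)$, which Lemma \ref{lemkey} (together with the intermediate estimate \eqref{expterm} in the regions where only one of $\lvert\xt{y}\rvert,\lvert\xt{q}\rvert$ is large, and the trivial bound $\lvert A\rvert\le1$ elsewhere) makes bounded, uniformly for $t$ in a compact subinterval of $(3/(2\mu),2/\mu)$. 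The shear term is then controlled by the strong continuity of the measure-preserving group $S_t$ on unweighted $L^2(\mathbb{R}^{2d})$, which requires no pointwise regularity of $\widehat{W}_0$ at all. Your route is the cleaner one on this point, since it replaces the paper's dominated-convergence step for the sheared non-smooth function by a soft functional-analytic fact; the paper's restart trick is the more standard smoothing-semigroup argument and localizes all the analysis to times after $t_0$, where the solution is already known to be smooth. Both proofs rest on exactly the same key estimate, Lemma \ref{lemkey}, and your treatment of the exponential-factor difference (splitting off $\lvert\widehat{W}_0\circ S_t\rvert^2-\lvert\widehat{W}_0\circ S_{t_0}\rvert^2$ in $L^1$ so that the dominating function is $t$-independent) correctly patches the small gap that a naive domination by $\lvert\widehat{W}_0\circ S_t\rvert^2$ would leave.
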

\begin{proof}[of Lemma \ref{lemreg3}]
Let $t_1>0$ and $(t_n)_n$ be a sequence which converges to $t_1$, Then, there exists $N$ and $t_0>0$ such that $\forall n\geq N$, $t_1\wedge t_n>t_0$, and 
\[\begin{split} \|W(t_n)-W(t_1)\|^2_{H^k(\mathbb{R}^{2d})}\leq &C_k \int d\xt{y}d\xt{q}( \lvert \xt{y} \rvert^k+\lvert \xt{q} \rvert^k) \lvert  \widehat{W}(t_0,\xt{y},\xt{q}+t_1\xt{y})\rvert^2\\
&\times\big\lvert 1-e^{\int_{t_n}^{t_1}\Psi(\xt{q}+(v-t_0)\xt{y})}\big\rvert ^2\\
&+C_k \int d\xt{y}d\xt{q}( \lvert \xt{y} \rvert^k+\lvert \xt{q} \rvert^k) \lvert  \widehat{W}(t_0,\xt{y},\xt{q}+t_n\xt{y})- \widehat{W}(t_0,\xt{y},\xt{q}+t_1\xt{y})\rvert^2.
\end{split}\] 
Then, we can conclude from Lemma \ref{lemreg2} and the dominated convergence theorem.
$\square$
\end{proof}

\section{Proof of Proposition \ref{probrep}}\label{proofproprep}

First, let $W_0\in \mathcal{C}^\infty _0(\mathbb{R}^{2d})$ and let $(L_t)_t$ be a Lévy process with characteristic exponent $\Psi$ defined by \eqref{caracexp} and infinitesimal generator given by $\mathcal{L}$ defined by \eqref{formgene}, and so that $(L_t)_t$ is a Markov process on $\mathcal{C} _{0,\infty} (\mathbb{R}^{2d})$, the space of continuous functions which tend to $0$ at infinity. Moreover, let us note that $\mathcal{C}^\infty _0 (\mathbb{R}^{2d})\subset \mathcal{D}(\mathcal{L})$.

According to assumption \eqref{SDCAreg}, the fact that $\sigma$ has a support included in a compact set, and \cite[Lemma 31.7]{sato}, the process defined by
\begin{equation}\label{markovcouple} X_t=\Big(\xt{x}-t\xt{k}-\int_0^t L_s ds,\xt{k}+L_t\Big)\end{equation}
is a Markov process on $\mathcal{C}_{0,\infty}(\mathbb{R}^{2d})$ with infinitesimal generator given by
\[L\varphi (\xt{x},\xt{k})=-\xt{k}\cdot\nabla_{\xt{x}}\varphi(\xt{x},\xt{k})+\mathcal{L}\varphi(\xt{x},\xt{k}).\] 
As the result, The function defined by
\begin{equation}\label{repW}W(t,\xt{x},\xt{k})=\E\Big[W_0\Big(\xt{x}-t\xt{k}-\int_0^t L_s ds,\xt{k}+L_t\Big)\Big],\end{equation}
is a classical solution of 
\[\partial_t W+\xt{k}\cdot\nabla W=\mathcal{L}W.\]
Moreover, if $p(t,d\xt{x}d\xt{k})$ is the transition function of the Markov process $(\int_0^t L_s ds, L_t)_t$, 
\begin{equation}\label{boundext}\begin{split}
\| W(t) \|^2_{L^2(\mathbb{R}^{2d})}&=\int d\xt{x}d\xt{k}\Big\lvert \int W_0(\xt{x}-t\xt{k}-\xt{q},\xt{k}+\xt{q})  p(t,d\xt{y}d\xt{q})\Big\rvert^2 \\
&\leq \int  p(t,d\xt{y}d\xt{q})  \int d\xt{k} \int d\xt{x} \rvert W_0(\xt{x}-t\xt{k}-\xt{q},\xt{k}+\xt{q})\lvert^2\\
&\leq \|W_0\|^2_{L^2(\mathbb{R}^{2d})},
\end{split}\end{equation}
so that $W$ is uniformly bounded in $L^2(\mathbb{R}^{2d})$ and is the unique weak solution of \eqref{radtranseq}. However, \eqref{boundext} shows that \eqref{repW} can be extend to $W_0\in L^2(\mathbb{R}^{2d})$.
Then by density,  \eqref{repW}   is the unique weak solution uniformly bounded of \eqref{radtranseq} with $W_0\in L^2(\mathbb{R}^{2d})$. That concludes the proof of Proposition \ref{probrep}.

\section{Proof of Theorem \ref{fractional}}\label{proofthfrac}

First, Let us recall that 
\begin{equation}\label{unifboundW}
\sup_\eta \sup_{t\geq 0} \|W^\eta(t)\|_{L^2(\mathbb{R}^{2d})}\leq \|W_0\|_{L^2(\mathbb{R}^{2d})},\end{equation}
where $W^{\eta}$ is the unique solution uniformly bounded in $L^2(\mathbb{R}^{2d})$ of the radiative transfer equation \eqref{radtranseq} associated to $\sigma^\eta(\xt{p})$.

The proof of Theorem \ref{fractional} uses the probabilistic representation obtained in Proposition \ref{probrep}.  The following proof is in two step. First, we prove the relative compactness of $(W^{\eta})_\eta$, afterward we identify all the subsequence limits. The proof of the relative compactness of $(W^{\eta})_\eta$ is based on the probabilistic representation obtained in Proposition \ref{probrep}, but where $(L^\eta_t)_t$ is associated to the jump measure $\sigma^\eta(\xt{p})d\xt{p}$.

\begin{lem}\label{relatcomp}
$\forall (\xt{x},\xt{k})\in\mathbb{R}^{2d}$, $(X^\eta)_\eta$, defined by \eqref{markovcouple}, is tight is on $\mathcal{D}([0,+\infty),\mathbb{R}^{2d})$. 
\end{lem}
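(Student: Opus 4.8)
The plan is to deduce the tightness of $(X^\eta)_\eta$ from the tightness of the driving L\'evy processes $(L^\eta)_\eta$ on $\mathcal{D}([0,+\infty),\mathbb{R}^d)$, where $L^\eta$ is the L\'evy process with jump measure $\sigma^\eta(\xt{p})d\xt{p}$ and (real, nonpositive) characteristic exponent $\Psi^\eta(\xt{q})=\int d\xt{p}\,\sigma^\eta(\xt{p})(e^{i\xt{p}\cdot\xt{q}}-1)$. Indeed, from \eqref{markovcouple} one may write $X^\eta=\Phi(L^\eta)$ with
\[\Phi(\ell)(t)=\Big(\xt{x}-t\xt{k}-\int_0^t \ell(s)\,ds,\ \xt{k}+\ell(t)\Big).\]
The first component is the composition of $\ell$ with the integration map $\ell\mapsto\big(t\mapsto\int_0^t\ell(s)\,ds\big)$, which is continuous from $\mathcal{D}([0,+\infty),\mathbb{R}^d)$ (Skorokhod topology) into $\mathcal{C}([0,+\infty),\mathbb{R}^d)$, while the second component is a continuous shift of the identity; hence $\Phi:\mathcal{D}([0,+\infty),\mathbb{R}^d)\to\mathcal{D}([0,+\infty),\mathbb{R}^{2d})$ is continuous. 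Since tightness is preserved under continuous maps — if $K$ is compact with $\Pro(L^\eta\notin K)<\varepsilon$ uniformly in $\eta$, then $\Phi(K)$ is compact and $\Pro(X^\eta\notin\Phi(K))\le\Pro(L^\eta\notin K)<\varepsilon$ — it suffices to establish the tightness of $(L^\eta)_\eta$.

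For the tightness of $(L^\eta)_\eta$ the key input is the $\eta$-uniform domination \eqref{boundedfrac}, namely $0\le\sigma^\eta(\xt{p})\le C|\xt{p}|^{-(d+\theta)}$. Because $\sigma^\eta$ is even, inherited from the evenness of $\widehat{R}_0$ and $\mathfrak{g}$, the exponent reduces to a cosine integral and
\[\big\lvert\E[e^{i\xt{q}\cdot L^\eta_t}]\big\rvert=\exp\Big(-t\int d\xt{p}\,\sigma^\eta(\xt{p})\big(1-\cos(\xt{p}\cdot\xt{q})\big)\Big)\ge\exp\big(-t\,C_\theta|\xt{q}|^\theta\big),\]
using $\int d\xt{p}\,|\xt{p}|^{-(d+\theta)}(1-\cos(\xt{p}\cdot\xt{q}))=C_\theta|\xt{q}|^\theta$. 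This bound is independent of $\eta$, so the family of characteristic functions of $L^\eta_t$ is equicontinuous at $\xt{q}=0$ uniformly in $\eta$, which yields the tightness of the marginals $(L^\eta_t)_\eta$ for each fixed $t$. For the increment (Aldous) condition I would use the L\'evy--It\^o decomposition of $L^\eta$ into a compensated small-jump martingale and a large-jump part: by \eqref{boundedfrac},
\[\int_{|\xt{p}|\le1}|\xt{p}|^2\sigma^\eta(\xt{p})\,d\xt{p}\le C\int_{|\xt{p}|\le1}|\xt{p}|^{2-d-\theta}d\xt{p}<+\infty,\qquad \int_{|\xt{p}|>1}\sigma^\eta(\xt{p})\,d\xt{p}\le C\int_{|\xt{p}|>1}|\xt{p}|^{-d-\theta}d\xt{p}<+\infty,\]
both finite and uniform in $\eta$ since $\theta\in(0,1)$. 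Bounding the small-jump part through its second moment ($\le\delta\int_{|\xt{p}|\le1}|\xt{p}|^2\sigma^\eta\,d\xt{p}$) and the large-jump part through the probability of at least one jump before $\delta$ ($\le\delta\int_{|\xt{p}|>1}\sigma^\eta\,d\xt{p}$), and using stationary independent increments together with the strong Markov property to replace the increment over $[\tau,\tau+\delta]$ by $L^\eta_\delta$, one obtains $\lim_{\delta\to0}\sup_\eta\sup_{\tau\le T}\Pro(|L^\eta_{\tau+\delta}-L^\eta_\tau|>\varepsilon)=0$ for every $\varepsilon>0$. Marginal tightness together with this Aldous condition gives the tightness of $(L^\eta)_\eta$ on $\mathcal{D}([0,+\infty),\mathbb{R}^d)$.

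The main obstacle is to keep every estimate uniform in $\eta$, and this is exactly what \eqref{boundedfrac} provides: the single dominating L\'evy measure $C|\xt{p}|^{-(d+\theta)}d\xt{p}$ controls simultaneously the near-zero concentration (through $|\xt{p}|^2\sigma^\eta$) and the tail mass (through $\sigma^\eta$), both integrals being finite precisely because $\theta\in(0,1)$. A secondary technical point is the continuity of the integration functional $\ell\mapsto\int_0^\cdot\ell\,ds$ in the Skorokhod topology, and the joint tightness of the pair $\big(\int_0^\cdot L^\eta\,ds,\,L^\eta\big)$ consisting of one continuous and one c\`adl\`ag component; this causes no difficulty, since $J_1$-convergence $\ell^n\to\ell$ forces uniform-on-compacts convergence of the time integrals, so no jump matching between the two components is needed.
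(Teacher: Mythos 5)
Your proof is correct, and its technical core coincides with the paper's: both split $L^\eta$ via L\'evy--It\^o into a compensated small-jump martingale and a compound-Poisson large-jump part, make every estimate uniform in $\eta$ through the domination \eqref{boundedfrac} (finiteness of $\int_{\lvert\xt{p}\rvert\le1}\lvert\xt{p}\rvert^2\sigma^\eta$ and $\int_{\lvert\xt{p}\rvert>1}\sigma^\eta$ for $\theta\in(0,1)$), and conclude by Aldous's criterion. You differ in two places. For the boundedness half of the criterion, the paper proves the stronger compact-containment estimate $\sup_\eta\Pro(\sup_{t\le T}\lvert L^\eta_t\rvert\ge M)=\mathcal{O}(M^{-\theta})$, via Doob's maximal inequality for the compensated small-jump martingale and a counting argument over the number of large jumps ($\sum_n n\,\Pro(\lvert Z^\eta_1\rvert\ge M/(2n))\Pro(N(T)=n)$); you instead extract fixed-time marginal tightness from the uniform bound $\E[e^{i\xt{q}\cdot L^\eta_t}]\ge e^{-tC_\theta\lvert\xt{q}\rvert^\theta}$ on the (real, symmetric) characteristic function, which is shorter but relies on Aldous's theorem in the form requiring only marginal tightness on a dense set of times plus the stopping-time condition, whereas the paper's quantitative supremum bound is what it reuses to control the integral $\int_0^\cdot L^\eta_s\,ds$. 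Second, the paper verifies the increment condition for $X^\eta$ directly, estimating the component $\int_\tau^{\tau+\mu}\lvert L^\eta_s\rvert\,ds$ by hand on the event $\sup_{t\le T}\lvert L^\eta_t\rvert\le M'$, while you push tightness of $L^\eta$ forward through the map $\ell\mapsto\bigl(\xt{x}-t\xt{k}-\int_0^\cdot\ell(s)\,ds,\ \xt{k}+\ell\bigr)$, correctly noting that $J_1$-convergence forces local-uniform convergence of the time integrals so the pairing of a continuous with a c\`adl\`ag component causes no jump-matching issue. Both implementations are valid; yours buys a cleaner transfer from $L^\eta$ to $X^\eta$ at the cost of needing tightness of $L^\eta$ on the full path space, which your argument does deliver.
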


\begin{proof}[of Lemma \ref{relatcomp}]
To prove this lemma we will use the Aldous tightness criteria \cite{billingsley}. Let us begin with the tightness of $(L^\eta_t)_t$. 
First of all, let us decompose $(L^\eta_t)_t$ according to the size of its jumps,
\[L^\eta_t =L^{1,\eta}_t+L^{2,\eta}_t =  \int_{\lvert \xt{p}\rvert \leq 1} \xt{p}N^\eta (t,d\xt{p})+\int_{\lvert \xt{p}\rvert > 1} \xt{p}N^\eta (t,d\xt{p}),\]
where $N^\eta(t,d\xt{p})$ is a random Poisson measure with intensity measure $\sigma^\eta(\xt{p})d\xt{p}$. Let us note that $(L^{1,\eta}_t)_t$ is a Lévy process with compactly supported jump measure $\sigma^\eta(\xt{p})\1_{\lvert \xt{p}\rvert \leq 1}d\xt{p}$, so that 
\[\sup_{\eta}\sup_{0\leq t \leq T }\E[\lvert L^{1,\eta}_t \rvert ^2]<+\infty,\]
thanks to \eqref{boundedfrac} and \cite[Theorem 2.3.7]{applebaum}. Moreover, $(L^{2,\eta}_t)_t$ is a compound Poisson process \cite[Theorem 2.3.9]{applebaum}. Then, there exist a poisson process $(N_t)_t$ with intensity $1$ and a sequence $(Z^\eta _n)_n$ of iid random variables with distribution $\sigma^\eta(\xt{p})\1_{\lvert \xt{p}\rvert > 1}d\xt{p}$, such that 
\[L^{2,\eta}_t=\sum_{j=0}^{N(t)} Z^\eta _j\]
 in distribution with $Z^\eta_0=0$. Moreover, $(N_t)_t$ and $(Z^\eta _n)_n$ are independent. 
First,
\[\mathbb{P}(\sup_{0\leq t\leq T} \lvert L^\eta_t \rvert \geq M)\leq  \mathbb{P}(\sup_{0\leq t\leq T} \lvert L^{1,\eta}_t \rvert \geq M/2)+\mathbb{P}(\sup_{0\leq t\leq T} \lvert L^{2,\eta}_t \rvert \geq M/2).\]
Since 
\[ L^{1,\eta}_t-t\int_{\lvert \xt{p} \rvert \leq 1} \xt{p} \sigma^\eta(\xt{p}) d\xt{p}\]
is a martingale, and for $M$ sufficiently large, 
\[\begin{split}
 \mathbb{P}(\sup_{0\leq t\leq T} \lvert L^{1,\eta}_t \rvert \geq M/2)&\leq  \mathbb{P}\Big(\sup_{0\leq t\leq T} \Big\lvert L^{1,\eta}_t -t\int_{\lvert \xt{p} \rvert \leq 1} \xt{p} \sigma^\eta(\xt{p}) d\xt{p}\Big\rvert \geq M/4\Big)\\
 &\leq \frac{32}{M^2}\Big[\sup_{\eta}\E[\lvert L^{1,\eta}_t \rvert ^2] + CT\Big( \int_{\lvert \xt{p} \rvert \leq 1} \frac{1}{\lvert \xt{p}\rvert ^{d+\theta-1}} d\xt{p} \Big)^2\Big],
\end{split}\]
according to the Doob's maximal inequality. Moreover, 
\[\begin{split}
\mathbb{P}(\sup_{0\leq t\leq T} \lvert L^{2,\eta}_t \rvert \geq M/2)&\leq \sum_{n\geq0} \mathbb{P}\Big(\sum_{j=0}^n \lvert Z^\eta_j \rvert \geq M/2\Big\vert N(T)=n \Big)\mathbb{P}(N(T)=n)\\
&\leq \sum_{n\geq 0} \sum_{j=1}^n \mathbb{P}(\lvert Z^\eta_j\rvert \geq M/(2n))\mathbb{P}(N(T)=n)\\
&\leq  \sum_{n\geq 0} n \int_{\lvert\xt{p} \rvert \geq M/(2n) } \sigma^\eta(\xt{p}) d\xt{p} \mathbb{P}(N(T)=n)\\
&\leq \frac{C \Omega (d)}{M^{\theta}}\E[N(T)^{1+\theta}].
\end{split}\]
Then,
\[\sup_{\eta}\mathbb{P}(\sup_{0\leq t\leq T} \lvert L^\eta_t \rvert \geq M)=\mathcal{O}\Big(\frac{1}{M^{\theta}}\Big)\quad \text{as }M\to +\infty.\]
Consequently, $\forall M'>0$
\[ \mathbb{P}\Big(\sup_{0\leq t\leq T}\Big\lvert\int_0^t L^\eta _s ds \Big\rvert \geq M \Big)\leq \mathbb{P}\Big(\sup_{0\leq t\leq T}\lvert L^\eta _t\rvert \geq M'\Big)+\mathbb{P}\Big(\int_0^T \lvert L^\eta _s\rvert ds \geq M ,\sup_{0\leq t\leq T}\lvert L^\eta _t\rvert \leq M' \Big), \]
and then,
\[\lim_{M\to +\infty}\sup_{\eta}\mathbb{P}(\sup_{0\leq t\leq T} \lvert X^\eta(t)\rvert \geq M  )=0.\]
Let $\tau$ be a discrete stopping time relatively to the filtration of $(X^\eta)_\eta$, and bounded by $T$. According to the stationarity of $(L^\eta _t)_t$,
$\forall \mu >0$ and $\forall \nu >0$, we have
\[\begin{split}
\mathbb{P}(\lvert X^{\eta}(\tau +\mu)-X^{\eta}(\tau)\rvert >\nu)&\leq \mathbb{P}(\lvert L^{\eta}(\tau +\mu)-L^{\eta}(\tau)\rvert >\nu/2)+\mathbb{P}\Big(\int^{\tau +\mu}_{\tau}\lvert L^{\eta}_s\rvert ds >\nu/2\Big) \\
&\leq  \mathbb{P}(\lvert L^{\eta}(\mu)\rvert >\nu/2)+\mathbb{P}\Big(\int^\mu_0\lvert L^{\eta}_s\rvert ds >\nu/2\Big).\end{split}\]
First, it is clear that $\forall M'>0$
\[ \mathbb{P}\Big(\int^\mu_0\lvert L^{\eta}_s\rvert ds >\nu/2\Big)\leq  \mathbb{P}\Big(\sup_{0\leq t\leq T}\lvert L^\eta _t\rvert \geq M'\Big)+\mathbb{P}\Big(\int_0^\mu \lvert L^\eta _s\rvert ds \geq \nu/2 ,\sup_{0\leq t\leq T}\lvert L^\eta _t\rvert \leq M' \Big).\]
Moreover,
\[\begin{split}
 \mathbb{P}(\lvert L^{\eta}(\mu)\rvert >\nu/2)&\leq  \mathbb{P}(\lvert L^{1,\eta}(\mu)\rvert >\nu/4)+ \mathbb{P}(\lvert L^{2,\eta}(\mu)\rvert >\nu/4)\\
&\leq \frac{16}{\nu^2} \Big(\mu \int_{\lvert \xt{p}\rvert \leq 1}d\xt{p}\frac{C}{\lvert \xt{p}\rvert^{d+\theta-1}}+\mu^2 \Big(\int_{\lvert \xt{p}\rvert \leq 1}d\xt{p}\frac{C}{\lvert \xt{p}\rvert^{d+\theta-1}}\Big)^2\Big)+C_{d,\theta}\E[N(\mu)^{1+\theta}].
\end{split}\]
However, $\E[N(\mu)^{1+\theta}]\leq \E[N(\mu)^2]=\mu+\mu^2$, so that 
\[ \lim_{\mu\to 0}\sup_{\eta}\sup_{\tau} \mathbb{P}(\lvert X^{\eta}(\tau +\mu)-X^{\eta}(\tau)\rvert >\nu)=0.\]
That concludes the proof of Lemma \ref{relatcomp}.
$\square$
\end{proof}

Lemma \ref{relatcomp} implies the relative compactness of $(W^{\eta})_\eta$. Now, let us identify all the accumulation points of $(W^{\eta})_\eta$. First, let us denote 
\[  \mathcal{L}^{\infty}\varphi(\xt{k})=\int d\xt{p}\sigma^{\infty}(\xt{p})\big(\varphi(\xt{k}+\xt{p})-\varphi(\xt{k})\big)\quad \text{and} \quad \sigma^{\infty}(\xt{p})=\frac{\sigma}{(2\pi)^d\lvert \xt{p}\rvert^{d+\theta}}.  \]
We have the following lemma which shows the convergence of the transfer operator $\mathcal{L}^\eta$. 
\begin{lem}\label{cvgeneW}
We have $\forall \lambda\in \mathcal{C}^\infty_0(\mathbb{R}^{d})$, 
\[\lim_{\eta \to 0}\|\mathcal{L}^{\eta}\lambda-\mathcal{L}^\infty \lambda\|_{L^2(\mathbb{R}^{d})}=0.\]
\end{lem}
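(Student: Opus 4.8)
The plan is to write the difference as a single integral against $\sigma^\eta-\sigma^\infty$,
\[
(\mathcal{L}^\eta\lambda-\mathcal{L}^\infty\lambda)(\xt{k})=\int d\xt{p}\,\big(\sigma^\eta(\xt{p})-\sigma^\infty(\xt{p})\big)\big(\lambda(\xt{k}+\xt{p})-\lambda(\xt{k})\big),
\]
and to deduce convergence in $L^2(\mathbb{R}^d_{\xt{k}})$ by the dominated convergence theorem applied in the $\xt{k}$ variable. This requires two ingredients: pointwise (in $\xt{k}$) convergence of the above integral to $0$, and a single $\eta$-independent majorant lying in $L^2(\mathbb{R}^d_{\xt{k}})$.

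First I would establish the pointwise limit. Fix $\xt{k}$. By \eqref{boundedfrac} we have $0\le\sigma^\eta(\xt{p})\le C|\xt{p}|^{-(d+\theta)}$ uniformly in $\eta$, and $\sigma^\infty$ satisfies the same bound. Splitting the $\xt{p}$-integral, for $|\xt{p}|\le 1$ I bound $|\lambda(\xt{k}+\xt{p})-\lambda(\xt{k})|\le|\xt{p}|\,\|\nabla\lambda\|_{\infty}$, so the integrand is dominated by $2C\|\nabla\lambda\|_{\infty}|\xt{p}|^{1-d-\theta}$, which is integrable near $0$ precisely because $\theta<1$; for $|\xt{p}|>1$ I bound $|\lambda(\xt{k}+\xt{p})-\lambda(\xt{k})|\le 2\|\lambda\|_{\infty}$, giving domination by $4C\|\lambda\|_{\infty}|\xt{p}|^{-d-\theta}$, integrable at infinity because $\theta>0$. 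Since $\sigma^\eta(\xt{p})\to\sigma^\infty(\xt{p})$ pointwise for $\xt{p}\neq 0$ by \eqref{globalfrac}, dominated convergence in $\xt{p}$ yields $(\mathcal{L}^\eta-\mathcal{L}^\infty)\lambda(\xt{k})\to 0$ for every $\xt{k}$.

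Next I would produce the uniform $L^2(\xt{k})$ majorant. Using the same split and $\lambda(\xt{k}+\xt{p})-\lambda(\xt{k})=\int_0^1\xt{p}\cdot\nabla\lambda(\xt{k}+u\xt{p})\,du$ for the near part, set
\[
G(\xt{k})=2C\int_{|\xt{p}|\le 1}\frac{1}{|\xt{p}|^{d+\theta-1}}\int_0^1|\nabla\lambda(\xt{k}+u\xt{p})|\,du\,d\xt{p}+2C\int_{|\xt{p}|>1}\frac{|\lambda(\xt{k}+\xt{p})|+|\lambda(\xt{k})|}{|\xt{p}|^{d+\theta}}\,d\xt{p}.
\]
Because both $\sigma^\eta$ and $\sigma^\infty$ obey the bound in \eqref{boundedfrac}, $G$ dominates $|(\mathcal{L}^\eta-\mathcal{L}^\infty)\lambda|$ uniformly in $\eta$. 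Applying Minkowski's integral inequality to pull the $L^2(\xt{k})$ norm inside the $\xt{p}$-integrals, and using translation invariance of the Lebesgue norm (so $\|\nabla\lambda(\cdot+u\xt{p})\|_{L^2}=\|\nabla\lambda\|_{L^2}$ and $\|\lambda(\cdot+\xt{p})\|_{L^2}=\|\lambda\|_{L^2}$), I obtain
\[
\|G\|_{L^2}\le 2C\|\nabla\lambda\|_{L^2}\int_{|\xt{p}|\le 1}\frac{d\xt{p}}{|\xt{p}|^{d+\theta-1}}+4C\|\lambda\|_{L^2}\int_{|\xt{p}|>1}\frac{d\xt{p}}{|\xt{p}|^{d+\theta}}<+\infty,
\]
the two radial integrals converging exactly because $\theta\in(0,1)$. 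Since $(\mathcal{L}^\eta-\mathcal{L}^\infty)\lambda\to 0$ pointwise and $|(\mathcal{L}^\eta-\mathcal{L}^\infty)\lambda|^2\le G^2\in L^1(\mathbb{R}^d)$, dominated convergence gives $\|\mathcal{L}^\eta\lambda-\mathcal{L}^\infty\lambda\|_{L^2}\to 0$.

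I expect the main obstacle to be the construction of an honest $\eta$-uniform $L^2(\xt{k})$ \emph{dominating} function rather than a mere uniform $L^2$ bound, which would not suffice for norm convergence; in particular, one must control the singularity of $\sigma^\eta$ at $\xt{p}=0$ uniformly in $\eta$. The combination of the mean-value bound on small jumps with Minkowski's integral inequality is what makes this work, and the borderline integrability—at the origin using $\theta<1$ and at infinity using $\theta>0$—is precisely where the hypothesis $\theta\in(0,1)$ enters.
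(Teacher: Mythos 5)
Your proof is correct and follows essentially the same route as the paper: the same splitting of the $\xt{p}$-integral at $\lvert\xt{p}\rvert=1$, the mean-value bound on the small jumps, Minkowski's integral inequality, and the pair \eqref{globalfrac}--\eqref{boundedfrac}, with $\theta\in(0,1)$ entering exactly where you say. The only difference is organizational: the paper keeps the factor $\lvert\sigma^\eta-\sigma^\infty\rvert$ inside the Minkowski bound, obtaining $\|\mathcal{L}^{\eta}\lambda-\mathcal{L}^\infty\lambda\|_{L^2}\leq C\big(\|\lambda\|_{L^2}+\|\nabla_{\xt{k}}\lambda\|_{L^2}\big)\big(\int_{\lvert\xt{p}\rvert\leq 1}\lvert\xt{p}\rvert\,\lvert\sigma^\eta-\sigma^\infty\rvert\,d\xt{p}+\int_{\lvert\xt{p}\rvert>1}\lvert\sigma^\eta-\sigma^\infty\rvert\,d\xt{p}\big)$, so a single dominated-convergence argument in $\xt{p}$ finishes, whereas you apply dominated convergence twice (pointwise in $\xt{k}$, then in $L^2$ against the fixed majorant $G$); both are valid.
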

\begin{proof}[of Lemma \ref{cvgeneW}]
This lemma is a consequence of
\[\begin{split}
\| \mathcal{L}^{\eta}\lambda-\mathcal{L}^\infty \lambda\|_{L^2(\mathbb{R}^{2d})}\leq C(& \| \lambda \|_{L^2(\mathbb{R}^{2d})}+\| \nabla_{\xt{k}}\lambda \| )\\
&\times\Big(\int_{\lvert \xt{p}\rvert \leq 1}d\xt{p} \lvert \xt{p}\rvert \lvert \sigma^\eta(\xt{p})-\sigma^\infty(\xt{p})\rvert +\int_{\lvert \xt{p}\rvert > 1}d\xt{p}  \lvert \sigma^\eta(\xt{p})-\sigma^\infty(\xt{p})\rvert\Big), \end{split}\]
and the dominated convergence theorem, because of \eqref{globalfrac} and \eqref{boundedfrac}.
$\square$
\end{proof}
Now, let us consider $\eta_n$ a sequence, which goes to $0$ as $n\to +\infty$, and such that $\forall (t,\xt{x},\xt{k})$
\[\lim_{n\to+\infty}W^{\eta_n}(t,\xt{x},\xt{k})=W^{\infty}(t,\xt{x},\xt{k}).\] 
\begin{lem}\label{caractW}
$W^{\infty}(t,\xt{x},\xt{k})$ is a weak solution uniformly bounded in $L^2(\mathbb{R}^{2d})$ of the radiative transfer equation
\begin{equation}\label{radtreqW}
\partial_tW^\infty+\xt{k}\cdot\nabla_{\xt{x}}W^\infty=\mathcal{L}^\infty W^\infty\quad \text{with}\quad W^\infty(0,\xt{x},\xt{k})=W_0(\xt{x},\xt{k}).
\end{equation} 

\end{lem}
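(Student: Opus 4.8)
The plan is to pass to the limit along the subsequence $(\eta_n)_n$ in the weak formulation satisfied by each $W^{\eta_n}$. Since $W^{\eta_n}$ is the classical solution of \eqref{radtranseq} associated with $\sigma^{\eta_n}$, for every $\lambda\in\mathcal{C}^\infty_0(\mathbb{R}^{2d})$ and $t\geq0$ it satisfies
\[
\big<W^{\eta_n}(t),\lambda\big>_{L^2(\mathbb{R}^{2d})}-\big<W_0,\lambda\big>_{L^2(\mathbb{R}^{2d})}=\int_0^t\big<W^{\eta_n}(s),\xt{k}\cdot\nabla_{\xt{x}}\lambda+\mathcal{L}^{\eta_n}\lambda\big>_{L^2(\mathbb{R}^{2d})}\,ds,
\]
where I have used the symmetry of $\sigma^{\eta_n}$ to move the generator onto the test function. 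I would then show that each term converges to the corresponding term with $W^{\eta_n}$ replaced by $W^\infty$ and $\mathcal{L}^{\eta_n}$ by $\mathcal{L}^\infty$, which is exactly the weak form of \eqref{radtreqW}. The initial condition is immediate since $W^{\eta_n}(0)=W_0$ for all $n$.

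First I would upgrade the pointwise convergence $W^{\eta_n}(s)\to W^\infty(s)$ to weak convergence in $L^2(\mathbb{R}^{2d})$ at each fixed $s$. The uniform bound \eqref{unifboundW} keeps $(W^{\eta_n}(s))_n$ bounded in $L^2$, hence uniformly integrable on every set of finite measure; combined with the a.e. convergence, Vitali's theorem gives $\big<W^{\eta_n}(s),\psi\big>\to\big<W^\infty(s),\psi\big>$ for every $\psi\in\mathcal{C}^\infty_0(\mathbb{R}^{2d})$. Since $\mathcal{C}^\infty_0$ is dense in $L^2$ and the $L^2$ norms are uniformly bounded, a standard $\epsilon/2$ argument extends this to all test functions in $L^2(\mathbb{R}^{2d})$, so that $W^{\eta_n}(s)$ converges weakly in $L^2$ to $W^\infty(s)$. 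Fatou's lemma applied to the pointwise convergence also yields $\|W^\infty(s)\|_{L^2(\mathbb{R}^{2d})}\leq\|W_0\|_{L^2(\mathbb{R}^{2d})}$, so $W^\infty$ is uniformly bounded in $L^2$.

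With weak convergence in hand I would pass to the limit termwise. The boundary term $\big<W^{\eta_n}(t),\lambda\big>$ and the transport term $\big<W^{\eta_n}(s),\xt{k}\cdot\nabla_{\xt{x}}\lambda\big>$ converge because $\lambda$ and $\xt{k}\cdot\nabla_{\xt{x}}\lambda$ lie in $L^2$; for the time integral I would invoke dominated convergence, the integrands being bounded by $\|W_0\|_{L^2(\mathbb{R}^{2d})}$ times fixed norms on the compact interval $[0,t]$. For the scattering term I would split
\[
\big<W^{\eta_n}(s),\mathcal{L}^{\eta_n}\lambda\big>=\big<W^{\eta_n}(s),\mathcal{L}^\infty\lambda\big>+\big<W^{\eta_n}(s),(\mathcal{L}^{\eta_n}-\mathcal{L}^\infty)\lambda\big>.
\]
The first piece converges to $\big<W^\infty(s),\mathcal{L}^\infty\lambda\big>$ by weak convergence, since $\mathcal{L}^\infty\lambda\in L^2(\mathbb{R}^{2d})$; the second is bounded by $\|W_0\|_{L^2(\mathbb{R}^{2d})}\|(\mathcal{L}^{\eta_n}-\mathcal{L}^\infty)\lambda\|_{L^2(\mathbb{R}^{2d})}$, which tends to $0$ uniformly in $s$ by Lemma \ref{cvgeneW}. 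A further dominated convergence in $s$ then gives the limit of the integral.

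The main obstacle is the first step: passing from the merely pointwise convergence provided by the relative compactness to genuine weak $L^2$ convergence, since no pointwise domination is available and $\mathcal{L}^\infty\lambda$ is not compactly supported. The uniform bound \eqref{unifboundW} is precisely what makes the uniform-integrability and density arguments work, and Lemma \ref{cvgeneW} is what controls the singular part of the generators near $\xt{p}=0$; once these are combined the passage to the limit is routine.
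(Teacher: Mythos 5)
Your proof is correct and follows essentially the same route as the paper: Fatou's lemma for the uniform $L^2$ bound, upgrading the pointwise convergence to weak $L^2$ convergence at each fixed time via the uniform bound (the paper uses a mollifier and dominated convergence where you use Vitali plus density, an equivalent mechanism), and then passing to the limit in the weak formulation using Lemma \ref{cvgeneW} and dominated convergence in $s$. Your write-up is in fact more detailed than the paper's terse argument, but no new idea is involved.
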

\begin{proof}[of Lemma \ref{caractW}]
First, by Fatou's Lemma, we have $\forall t\geq0$
\[\|W^{\infty}(t)\|_{L^2(\mathbb{R}^{2d})}\leq \liminf_{n}  \|W^{\eta_n}(t)\|_{L^2(\mathbb{R}^{2d})}\leq \|W_0\|_{L^2(\mathbb{R}^{2d})}.\]
From this result, using a molifier, and the dominated convergence theorem, we have that 
\[\forall t\geq 0, \quad \lim_{n\to+\infty}W^{\eta_n}(t)=W^{\infty}(t)\text{ weakly in }L^2(\mathbb{R}^{2d}). \]
According to \eqref{unifboundW}, Lemma \ref{cvgeneW}, the dominated convergence theorem, and taking the limit in
\[\big<W^{\eta_n}(t),\lambda\big>_{L^2(\mathbb{R}^{2d})}-\big<W_0,\lambda\big>_{L^2(\mathbb{R}^{2d})}-\int_0^t\big<W^{\eta_n}(s),\xt{k}\cdot\nabla_{\xt{x}}\lambda+\mathcal{L}^{\eta_n}\lambda\big>_{L^2(\mathbb{R}^{2d})}ds=0\]
$\forall \lambda\in \mathcal{C}^\infty _0(\mathbb{R}^{2d})$, we get the desired result.
$\square$
\end{proof}
The following lemma concludes the proof of Theorem \ref{fractional}.
\begin{lem}\label{uniqW}
Equation \eqref{radtreqW} admits a unique weak solution uniformly bounded in $L^2(\mathbb{R}^{2d})$, which is given by \eqref{formula}. 
\end{lem}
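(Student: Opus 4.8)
The plan is to reduce the statement to the two ingredients already developed for the operator $\mathcal{L}$: the explicit Fourier representation of Lemma \ref{indsol} and the test-function uniqueness argument of Section \ref{proofthlimit}. The first step is to identify the characteristic exponent of $\mathcal{L}^\infty$. By definition,
\[
\Psi^\infty(\xt{q})=\int d\xt{p}\,\sigma^\infty(\xt{p})(e^{i\xt{p}\cdot\xt{q}}-1)=\frac{\sigma}{(2\pi)^d}\int d\xt{p}\,\frac{\cos(\xt{p}\cdot\xt{q})-1}{\lvert\xt{p}\rvert^{d+\theta}},
\]
the imaginary part vanishing by the symmetry $\xt{p}\mapsto-\xt{p}$. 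Writing $\xt{p}=r\xt{u}$ in polar coordinates and rescaling $r$ by $\lvert\xt{q}\rvert$, a standard evaluation of the radial integral $\int_0^{+\infty}(1-\cos s)s^{-1-\theta}ds$ together with the angular integral $\int_{\mathbb{S}^{d-1}}\lvert\xt{e}_1\cdot\xt{u}\rvert^\theta dS(\xt{u})$ yields $\Psi^\infty(\xt{q})=-\sigma(\theta)\lvert\xt{q}\rvert^\theta$, with $\sigma(\theta)$ the constant stated in Theorem \ref{fractional}; equivalently $\mathcal{L}^\infty=-\sigma(\theta)(-\Delta_{\xt{k}})^{\theta/2}$. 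This is the only genuinely new computation.

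With $\Psi^\infty$ in hand, existence and the formula follow verbatim from the proof of Lemma \ref{indsol} with $\Psi$ replaced by $\Psi^\infty$. For $W_0\in\mathcal{C}^\infty_0(\mathbb{R}^{2d})$ I would set $\tilde W^\infty(t,\xt{y},\xt{q})=\exp\big(\int_0^t du\,\Psi^\infty(\xt{q}+u\xt{y})\big)\widehat{W}_0(\xt{y},\xt{q}+t\xt{y})$ and check, exactly as there, that it solves $\partial_t\tilde W^\infty=\xt{y}\cdot\nabla_\xt{q}\tilde W^\infty+\Psi^\infty(\xt{q})\tilde W^\infty$, so that its inverse Fourier transform solves \eqref{radtreqW} classically. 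Substituting $\Psi^\infty(\xt{q}+u\xt{y})=-\sigma(\theta)\lvert\xt{q}+u\xt{y}\rvert^\theta$ gives precisely \eqref{formula}. Since $\big\lvert\exp\big(\int_0^t du\,\Psi^\infty(\xt{q}+u\xt{y})\big)\big\rvert\le1$, one has $\|W^\infty(t)\|_{L^2(\mathbb{R}^{2d})}\le\|W_0\|_{L^2(\mathbb{R}^{2d})}$ uniformly in $t$, and the representation extends to $W_0\in L^2(\mathbb{R}^{2d})$ by density, producing a uniformly bounded weak solution.

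For uniqueness I would transcribe the argument of Proposition \ref{testfunction} and Lemmas \ref{extension}--\ref{approxgene}, replacing $\sigma$ by $\sigma^\infty$ and $\mathcal{L}$ by $\mathcal{L}^\infty$. Define the truncated operator $\mathcal{L}^\infty_N\varphi(\xt{k})=\int_{\lvert\xt{p}\rvert>1/N}d\xt{p}\,\sigma^\infty(\xt{p})(\varphi(\xt{k}+\xt{p})-\varphi(\xt{k}))$, whose total mass $\Sigma^\infty_N=\int_{\lvert\xt{p}\rvert>1/N}\sigma^\infty(\xt{p})d\xt{p}$ is finite because $\theta>0$ controls the tail of $\sigma^\infty$. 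Building the test function $W_N$ from the series expansion \eqref{seriesexp} and running the duality identity of Lemma \ref{extension} against a weak solution $W$ with zero initial datum reduces everything to the small-jump estimate
\[
\sup_{t\in[0,T]}\|(\mathcal{L}^\infty-\mathcal{L}^\infty_N)\tilde W_N(t)\|_{L^2(\mathbb{R}^{2d})}\le C_\lambda\int_{\lvert\xt{p}\rvert<1/N}d\xt{p}\,\sigma^\infty(\xt{p})\lvert\xt{p}\rvert,
\]
obtained from a first-order Taylor expansion in $\xt{k}$; the right-hand side is finite and tends to $0$ as $N\to+\infty$ because near the origin $\sigma^\infty(\xt{p})\lvert\xt{p}\rvert\sim\lvert\xt{p}\rvert^{1-d-\theta}$ is integrable precisely when $\theta<1$. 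Hence $\langle W(T),\lambda\rangle=0$ for all $\lambda\in\mathcal{C}^\infty_0(\mathbb{R}^{2d})$ and all $T>0$, giving uniqueness.

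The main obstacle is that $\sigma^\infty$, unlike $\widehat{R}_0$, has unbounded support, so the estimates in Section \ref{proofthlimit} that invoked compactness of the support of $\widehat{R}_0$ (for instance in \eqref{seriesexp} and in bounding $\mathcal{L}^\infty W_N^n$) must be redone by splitting the jump kernel into small jumps $\lvert\xt{p}\rvert\le1$, handled by the Taylor bound and the finiteness of $\int_{\lvert\xt{p}\rvert\le1}\sigma^\infty(\xt{p})\lvert\xt{p}\rvert d\xt{p}$ (using $\theta<1$), and large jumps $\lvert\xt{p}\rvert>1$, handled by the crude bound $\|\varphi(\cdot+\xt{p})-\varphi\|_{L^2(\mathbb{R}^{2d})}\le2\|\varphi\|_{L^2(\mathbb{R}^{2d})}$ together with $\int_{\lvert\xt{p}\rvert>1}\sigma^\infty(\xt{p})d\xt{p}<+\infty$ (using $\theta>0$). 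Once this splitting is in place the remaining arguments run parallel to those already carried out, and one may alternatively recover the $L^2$ bound and even the representation through the probabilistic route of Proposition \ref{probrep} applied to the $\theta$-stable process generated by $\mathcal{L}^\infty$.
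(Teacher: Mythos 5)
Your identification of $\Psi^\infty(\xt{q})=-\sigma(\theta)\lvert\xt{q}\rvert^\theta$ (hence $\mathcal{L}^\infty=-\sigma(\theta)(-\Delta_\xt{k})^{\theta/2}$) and your existence argument via Lemma \ref{indsol} match the paper, which simply cites the L\'evy--Khintchine formula from Sato instead of redoing the polar-coordinate computation. The divergence is in the uniqueness step, and there your route has a genuine gap. The truncation argument of Proposition \ref{testfunction} and Lemma \ref{extension} requires the test function $\tilde W_N$ to be admissible in the weak formulation, i.e.\ it needs $\sup_t\|\partial_t\tilde W_N(t)\|_{L^2}$ and $\sup_t\|\xt{k}\cdot\nabla_\xt{x}\tilde W_N(t)\|_{L^2}$ finite. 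In the paper this is where the compact support of $\widehat{R}_0$ enters (see \eqref{sumgrad}): after $n$ jumps the factor $\xt{k}$ in the transport term is rewritten as $(\xt{k}+\sum_j\xt{p}_j)-\sum_j\xt{p}_j$, the first piece is controlled by $\|\xt{k}\cdot\nabla_\xt{x}\lambda\|_{L^2}$ after a change of variables, and the second piece costs a factor $\sum_j\lvert\xt{p}_j\rvert$, i.e.\ a \emph{first moment} of the jump kernel. For $\sigma^\infty(\xt{p})=\sigma/((2\pi)^d\lvert\xt{p}\rvert^{d+\theta})$ one has
\[
\int_{\lvert\xt{p}\rvert>1}\lvert\xt{p}\rvert\,\sigma^\infty(\xt{p})\,d\xt{p}=C\int_1^{+\infty}r^{-\theta}\,dr=+\infty\quad\text{for }\theta\in(0,1),
\]
so already $\xt{k}\cdot\nabla_\xt{x}W_N^1$ is not controlled by your estimates. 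Your small/large splitting with the crude bound $\|\varphi(\cdot+\xt{p})-\varphi\|_{L^2}\leq 2\|\varphi\|_{L^2}$ handles $\mathcal{L}^\infty\tilde W_N$ and $\Sigma_N^\infty$, but it does not touch the transport term, where the unbounded weight $\xt{k}$ interacts with the unbounded jumps; this is precisely the term for which the paper invokes compact support. A repair is possible (e.g.\ truncate the large jumps as well and remove that truncation separately, or estimate the large-$\xt{k}$ tail directly using that $\xt{p}$ is forced into the compact set $\mathrm{supp}_\xt{k}\lambda-\xt{k}$), but as written the step would fail.

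The paper sidesteps all of this: because the solution formula for the fractional equation is fully explicit, it takes as test function the time-reversed exponential
\[
\lambda(t,\xt{x},\xt{k})=\frac{1}{(2\pi)^d}\int d\xt{y}\,d\xt{q}\,e^{i(\xt{x}\cdot\xt{y}+\xt{k}\cdot\xt{q})}\exp\Big(-\sigma(\theta)\int_0^{T-t}\lvert\xt{q}+u\xt{y}\rvert^\theta\,du\Big)\widehat{\lambda}_0(\xt{y},\xt{q}+(T-t)\xt{y}),
\]
which solves the adjoint equation $\partial_t\lambda+\xt{k}\cdot\nabla_\xt{x}\lambda+\mathcal{L}^\infty\lambda=0$ \emph{exactly} (using $\mathcal{D}(\mathcal{L}^\infty)=H^\theta(\mathbb{R}^d)$), so that $\big<W(T),\lambda_0\big>_{L^2(\mathbb{R}^{2d})}=\int_0^T\big<W(s),0\big>\,ds=0$ with no approximation error to control at all. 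I would recommend adopting that argument: the whole point of Lemma \ref{uniqW} is that the fractional case is more explicit than the general one, so importing the $\mathcal{L}_N$ machinery designed for the non-explicit kernel $\sigma$ is both harder and unnecessary here.
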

\begin{proof}[of Lemma \ref{uniqW}]
First of all, let us recall the following Lévy-Khintchine formula \cite[Thorem 14.14]{sato}
\[\int \frac{d\xt{p}}{\lvert \xt{p}\rvert^{d+\theta} }\big(e^{i\xt{p}\cdot\xt{q}}-1\big)=-\lvert \xt{q}\rvert ^\theta\theta\Gamma(1-\theta)\int_{\mathbb{S}^{d-1}}dS(\xt{u})\lvert \xt{e}_1\cdot \xt{u} \rvert ^\theta,\]
which give $\forall \lambda\in \mathcal{C}^\infty_0(\mathbb{R}^{2d})$
\[\mathcal{L}^\infty\lambda=-\sigma(\theta)(-\Delta)^{\theta/2}\lambda.\]
To show the uniqueness of Equation \eqref{radtreqW}, let us defined

\[\lambda(t,\xt{x},\xt{k})=\frac{1}{(2\pi)^d}\int d\xt{y}d\xt{q} e^{i(\xt{x}\cdot\xt{y}+\xt{k}\cdot\xt{q})}\exp\Big(-\sigma(\theta)\int_0 ^{T-t}\lvert \xt{q}+u\xt{y} \rvert^\theta\Big)\widehat{\lambda}_0(\xt{y},\xt{q}+(T-t)\xt{y}),\]

with $\lambda_0\in \mathcal{C}^\infty _0(\mathbb{R}^{2d})$, and let $W$ be a solution of \eqref{radtreqW} uniformly bounded in $L^2(\mathbb{R}^{2d})$ with $W(0)=0$. Then, since $\mathcal{D}(\mathcal{L}^\infty)=H^\theta(\mathbb{R}^d)$, we have
\[\big<W(T),\lambda_0\big>_{L^2(\mathbb{R}^{2d})}=\big<W(T),\lambda(T)\big>_{L^2(\mathbb{R}^{2d})}=\int_0^T ds \big<W(s),\partial_t\lambda+\xt{k}\cdot\nabla_{\xt{x}}\lambda+\mathcal{L}^\infty \lambda\big>_{L^2(\mathbb{R}^{2d})}=0.\]
Moreover, it is clear that \eqref{formula} is a weak solution for $W_0\in \mathcal{C}^\infty_0(\mathbb{R}^{2d})$. Consequently, by density of $\mathcal{C}^\infty_0(\mathbb{R}^{2d})$ in $L^2(\mathbb{R}^{2d})$,  \eqref{formula} is also a weak solution uniformly bounded in $L^2(\mathbb{R}^{2d})$ with $W_0\in L^2(\mathbb{R}^{2d})$. 
$\square$
\end{proof}

\section*{Conclusion}

In this paper we have analyzed the asymptotic phase space energy density of a solution of the random Schrödinger equation with long-range correlation properties. The phase of a solution of the random Schrödinger equation in the same context has been studied in \cite{bal2}, and the phase and the phase space energy density for the random Schrödinger equation with rapidly decaying correlations has been studied in \cite{bal0,bal6,bal2}. In the case of rapidly decaying correlations the phase and the phase space energy density evolve on the same scale, and the asymptotic evolution of the phase space energy density is given by a radiative transfer equation. In the other hand, for random perturbations with slowly decaying correlations, the phase and the phase space energy density do not evolve on the same scale anymore. However, the scale of evolution of the phase space energy density in the context of rapidly and slowly decaying correlations are the same, and their evolutions are given by radiative transfer equations of the same form, but nevertheless with thorough differences. In the context of perturbations with long-range correlations, a scattering coefficient cannot be defined because of a nonintegrable singularity in the radiative transfer equation. However, this singularity is responsible to a regularizing effect on this radiative transfer equation which cannot be observed in the case of rapidly decaying correlations. Finally, we have derived an approximation of the radiative transfer equation involving a fractional Laplacian, which gives us a simpler model of phase space energy distribution which corresponds to the long space and time diffusion approximation of the radiative transfer equation. Moreover, this model permits to exhibit a decay of the phase space energy distribution which obey to a power law with exponent lying in $(0,1)$.

\end{document}